\documentclass[a4paper,twoside,10pt]{article}


\usepackage[utf8x]{inputenc}
\usepackage[T1]{fontenc}
\usepackage[english]{babel}
\usepackage{subfigure}
\usepackage{amsmath, amsthm, amsfonts, amssymb}
\usepackage{latexsym}
\usepackage{color}
\usepackage{stmaryrd}
\usepackage{endnotes}
\usepackage{wrapfig}
\usepackage{fancyhdr,lastpage}
\usepackage{ccaption}
\usepackage{xspace}
\usepackage{ifthen}
\usepackage{xkeyval}
\usepackage{remreset}
\usepackage{xcolor}
\usepackage[marginpar]{todo}
\usepackage{calc}
\usepackage{multirow}
\usepackage{graphicx}
\usepackage{bbding}
\usepackage{enumerate}
\usepackage[colorlinks=true, linkcolor=blue, citecolor=magenta]{hyperref}
\usepackage[marginpar]{todo}

\makeatletter

\pagestyle{fancy} 


\fancyhead{}
\fancyhead[LE]{\thepage}
\fancyhead[RE]{\@title}
\fancyhead[LO]{\nouppercase{\leftmark}}
\fancyhead[RO]{\thepage}
\fancyfoot{}
\fancypagestyle{plain}{%
	\fancyhf{} %
	\fancyfoot{}%
}







\newcommand{\resp}{respectively \xspace}
\newcommand{\oas}{$\omega$-almost surely ($\omega$-as)\xspace \renewcommand{\oas}{$\omega$-as\xspace}}
\newcommand{\oeb}{$\omega$-essentially bounded  ($\omega$-eb)\xspace \renewcommand{\oeb}{$\omega$-eb\xspace}}


\theoremstyle{definition}

\newtheorem{defi}{%
	\iflanguage{french}{D\'efinition}{Definition}
}[section]
\newtheorem{remark}{%
	\iflanguage{french}{Remarque}{Remark}%
}[section]

\newtheorem{example}{%
	\iflanguage{french}{Exemple}{Example}%
}[section]

\theoremstyle{plain}

\newtheorem{lemm}{%
	\iflanguage{french}{Lemme}{Lemma}%
}[section]
\newtheorem{prop}{%
	\iflanguage{french}{Proposition}{Proposition}%
}[section]
\newtheorem{theo}{%
	\iflanguage{french}{Th\'eor\`eme}{Theorem}%
}[section]
\newtheorem{coro}{%
	\iflanguage{french}{Corollaire}{Corollary}%
}[section]

\let\c@remark=\c@defi
\let\c@clai=\c@defi
\let\c@lemm=\c@defi
\let\c@prop=\c@defi
\let\c@theo=\c@defi
\let\c@coro=\c@defi
\let\c@pref=\c@defi
\let\c@example=\c@defi


\addto\extrasenglish{%
}


\newcommand{\exs}{\paragraph{Examples:}}

\newcommand{\N}{\mathbf{N}}
\newcommand{\Z}{\mathbf{Z}}

\newcommand{\R}{\mathbf{R}}
\newcommand{\C}{\mathbf{C}}
\renewcommand{\H}{\mathbf{H}}

\newcommand{\intvald}[2]{\left\{ #1,\dots , #2\right\} }

\define@choicekey*{planhyp}{corps}[\val \nr]{r,c,h,}[]{
	\ifcase\nr 
		\def\ph@corps{\R}
	\or
		\def\ph@corps{\C}
	\or
		\def\ph@corps{\H}
	\or
		\def\ph@corps{}
	\fi
}
\presetkeys{planhyp}{corps}{}
\newcommand{\HP}[2][]{
	\begingroup
		\setkeys{planhyp}{#1}
		\ifthenelse{\equal{\ph@corps}{}}
			{\mathbf{H}_{#2}}
			{\mathbf{H}_{#2}\left(\ph@corps \right)}
	\endgroup
}

\newcommand*{\dist}[3][]{
	\ifthenelse{\equal{#1}{}}
		{\left| #2- #3 \right|}
		{
			\ifthenelse{\equal{#1}{SC}}
			{\left\| #2- #3 \right\|}
			{\left| #2- #3 \right|_{#1}}
		}
}
\newcommand*{\distV}[1][]{
	\ifthenelse{\equal{#1}{}}
		{\left| \ . \ \right|}
		{
			\ifthenelse{\equal{#1}{SC}}
			{\left\| \ . \ \right\|}
			{\left| \ . \ \right|_{#1}}
		}
}

\newcommand*{\geo}[3][]{
	\ifthenelse{\equal{#1}{}}
		{\left[ #2, #3 \right]}
		{\left[ #2, #3 \right]_{#1}}
}

\newcommand*{\triang}[4][]{
	\ifthenelse{\equal{#1}{}}
		{\left[ #2, #3, #4 \right]}
		{\left[ #2, #3, #4 \right]_{#1}}
}


\define@boolkey{longueurtrans}{stable}[true]{}
\define@key{longueurtrans}{espace}[]{\def \lt@espace{#1}}
\presetkeys{longueurtrans}{stable=false,espace}{}
\newcommand{\len}[1]{\left\|#1\right\|}



\newcommand{\rips}[3][]{
	\ifthenelse{\equal{#1}{}}
		{P_{#2}\left( #3\right)}
		{P_{#2}^{(#1)}\left( #3\right)}
}
\newcommand{\diam}{\operatorname{diam}}

\newcommand{\sdp}[3][]{
	\ifthenelse{\equal{#1}{}}
		{#2 \rtimes #3}
		{#2 \rtimes _{#1} #3}
}
\newcommand{\aut}[1]{\operatorname{Aut} \left( #1\right)}
\newcommand{\out}[1]{\operatorname{Out} \left( #1\right)}

\newcommand{\free}[1]{\mathbf F_{#1}}

\newcommand{\G}{\mathbf G}
\newcommand{\burn}[2]{\mathbf B _{#1}(#2)}
\newcommand{\dlim}{\displaystyle{\lim_{\longrightarrow}}\ }



\def\restriction#1#2{\mathchoice
              {\setbox1\hbox{${\displaystyle #1}_{\scriptstyle #2}$}
              \restrictionaux{#1}{#2}}
              {\setbox1\hbox{${\textstyle #1}_{\scriptstyle #2}$}
              \restrictionaux{#1}{#2}}
              {\setbox1\hbox{${\scriptstyle #1}_{\scriptscriptstyle #2}$}
              \restrictionaux{#1}{#2}}
              {\setbox1\hbox{${\scriptscriptstyle #1}_{\scriptscriptstyle #2}$}
              \restrictionaux{#1}{#2}}}
\def\restrictionaux#1#2{{#1\,\smash{\vrule height .8\ht1 depth .85\dp1}}_{\,#2}}

\newcommand{\inv}{^{-1}}
\newcommand{\Red}[1]{\operatorname{Red} \left( #1\right)}

\newcommand{\prefF}{\mathcal F}


\renewcommand{\epsilon}{\varepsilon}
\renewcommand{\phi}{\varphi}
\renewcommand{\leq}{\leqslant}
\renewcommand{\geq}{\geqslant}



\newcommand{\makebiblio} {
	\IfFileExists{/Users/coulonr/Maths/svn/papers2/bibliography.bib}{
		\bibliography{/Users/coulonr/Maths/svn/papers2/bibliography}
	}{	
		\bibliography{/Volumes/Data/Maths/svn/papers2/bibliography}	 
	}
	\bibliographystyle{abbrv} 
}

\makeatother

\newcommand{\expo}[2][]{

	\ifthenelse{\equal{#1}{}}

		{\log\left( #2 \right)}
		{\log_{#1}\left( #2 \right)}
}

\begin{document} 

\title{Growth and order of automorphisms of free groups and free Burnside groups}
\author{R\'emi Coulon, Arnaud Hilion}

\maketitle

\begin{abstract}
	We prove that an outer automorphism of the free group is exponentially growing if and only if it induces an outer automorphism of infinite order of free Burnside groups with sufficiently large odd exponent.
\end{abstract}

\paragraph{Keywords.}Automorphism groups, free groups, Burnside groups, growth of automorphisms, train-track theory.

\paragraph{MSC.} 20F65, 20E05, 20E36, 20F28, 20F50, 68R15

\tableofcontents

\section{Introduction}

\paragraph{}
Let $n$ be an integer.
A group $G$ has exponent $n$ if for all $g \in G$, $g^n =1$.
In 1902, W.~Burnside asked the following question \cite{Bur02}.
Is a finitely generated group with finite exponent necessarily finite?
In order to study this question, the natural object to look at is the free Burnside group of rank $r$ and exponent $n$.
It is defined to be quotient of the free group $\free r$ of rank $r$ by the (normal) subgroup $\free r^n$ generated by the $n$-th power of all elements.
We denote it by $\burn rn$.
Every finitely generated group with finite exponent is a quotient of a free Burnside group.

\paragraph{}For a long time, hardly anything was known about free Burnside groups.
It was only proved that $\burn rn$ was finite for some small exponents: $n=2$ \cite{Bur02}, $n=3$ \cite{Bur02,LevWae33}, $n=4$ \cite{San40} and $n=6$ \cite{Hal57}.
In 1968, P.S.~Novikov and S.I.~Adian achieved a breakthrough by providing the first examples of infinite Burnside groups \cite{NovAdj68c}.
More precisely, they proved the following theorem.
Assume that $r$ is at least $2$ and $n$ is an odd exponent larger than or equal to 4381 then the free Burnside group of rank $r$ and exponent $n$ is infinite.

\paragraph{}This result has been improved in many directions.
S.I.~Adian decreased the bound on the exponent \cite{Adi79}.
A.Y. Ol'shanski\u\i\ obtained a similar statement using a diagrammatical approach of small cancellation theory \cite{Olc82}.
The case of even exponents has been solved by S.V.~Ivanov \cite{Iva94} and I.G.~Lysenok \cite{Lys96}.
More recently, T.~Delzant and M.~Gromov gave an alternative proof of the infiniteness of Burnside groups \cite{DelGro08}.
To sharpen our understanding of Burnside groups we would like to study the symmetries of $\burn rn$. 
This leads us to the outer automorphism group of $\burn rn$.

\paragraph{}
The subgroup $\free r^n$ is characteristic.
Hence the projection $\free r \twoheadrightarrow \burn rn$ induces a natural homomorphism $\out{\free r} \rightarrow \out{\burn rn}$.
This map is neither one-to-one nor onto. 
However it provides numerous examples of automorphisms of Burnside groups.
For instance the first author proved that for sufficiently large odd exponents, the image of $\out{\free r}$ in $\out{\burn rn}$ contains free subgroups of arbitrary rank and free abelian subgroups of rank $\left\lfloor  r/2 \right\rfloor$ \cite{Cou10a}.
In this article we are interested in the following question.
\paragraph{Question:}
Which (outer) automorphism of $\free r$ induces an (outer) automorphism of infinite order of $\burn rn$?

\begin{remark} 
\label{rem: confusion aut out}
	Since $\burn rn$ is a torsion group, every inner automorphism of $\burn rn$ has finite order.
	Therefore an automorphism $\phi \in \aut{\burn rn}$ has finite order if and only if so has its outer class.
	Hence, for our purpose  we can equivalently  work with $\out{\burn rn}$ or $\aut{\burn rn}$.
\end{remark}

\paragraph{}
The first examples of automorphisms of $\burn rn$ with infinite order were given by E.A.~Cherepanov \cite{Che05}.
In particular he proved that the automorphism $\phi$ of $\mathbf F (a,b)$ given by $\phi(a)=ab$ and $\phi(b)=a$ (also called Fibonacci morphism) induces an automorphism of infinite order of $\burn 2n$ for all odd integers $n \geq 665$.
In \cite{Cou10a}, the first author provides a large class of automorphisms with the same property.
\begin{theo}[Coulon, {\cite[Theorem 1.3]{Cou10a}}]
	Let $\phi$ be a hyperbolic automorphism of $\free r$ (i.e. the semi-direct product $\sdp[\phi]{\free r}\Z$ is word-hyperbolic).
	There exists an integer $n_0$ such that for all odd exponents $n \geq n_0$, $\phi$ induces an element of infinite order of $\out{\burn rn}$.
\end{theo}

\paragraph{}In order to state our main theorem, we first need to recall some definitions about the growth of automorphisms.
Let $\G$ be a finitely generated group endowed with the word-metric.
Given $g \in \G$, the length $\| g \|$ of its conjugacy class is the length of the smallest word over the generators which represents an element conjugated to $g$.
Given an outer automorphism $\Phi$ of $\G$ one says that 
\begin{itemize}
	\item $\Phi$ is \emph{exponentially growing} if there exist $g \in \G$ and $\lambda >1$ such that for all $k \in \N$, $\| \Phi^k(g)\| \geq \lambda^k$,
	\item $\Phi$ is \emph{polynomially growing} if for every $g \in \G$ there is a polynomial $P$ such that for all $k \in \N$, $\| \Phi^k(g)\| \leq P(k)$.
\end{itemize}
This definition actually does not depend on the choice of generators used to compute $\|\Phi^k(g)\|$.
Automorphisms of free groups are either exponentially or polynomially growing.

\paragraph{}The Fibonacci morphism $\phi$ used by E.A.~Cherepanov is not hyperbolic.
It can be seen indeed as the automorphism induced by a pseudo-Anosov homeomorphism of the punctured torus.
Since this homeomorphism preserves the boundary component of the torus, the semi-direct product  $\sdp[\phi] {\free r}\Z$ contains a copy of $\Z^2$ which is an obstruction to being hyperbolic.
Nevertheless like hyperbolic automorphisms it is exponentially growing.
On the other hand we also know that a polynomially growing automorphism of $\free r$ induces an automorphism of finite order of $\burn rn$ for every exponent $n$ \cite{Cou10a}.
It suggests a link between the growth of an automorphism of $\free r$ and its order as automorphism of $\burn rn$.

\begin{theo}\label{theo:main}
Let $\Phi\in\out{\free{r}}$ be an outer automorphism of $\free{r}$.
The following assertions are equivalent:
\begin{enumerate}
	\item 
	\label{enu: mth exp growth}
	$\Phi$ has exponential growth;
	\item 
	\label{enu: mth one exponent}
	there exists $n\in\N$ such that $\Phi$ induces an outer automorphism of
	$\burn{r}{n}$ of infinite order;
	\item 
	\label{enu: mth all odd exponents}
	there exist $\kappa,n_0\in\N$ such that for all odd integers $n\geq n_0$, 
	$\Phi$ induces an outer automorphism of $\burn r{\kappa n}$ of infinite order.
\end{enumerate}
\end{theo}

\paragraph{}
The new result of this article is the implication (\ref{enu: mth exp growth}) $\Rightarrow$ (\ref{enu: mth all odd exponents}).
Before sketching this proof, let us have a look at the arguments used by E.A.~Cherepanov \cite{Che05}.
The proof of the infiniteness of $\burn rn$ by P.S.~Novikov and S.I.~Adian is based on the following important fact \cite{Adi79}.
\begin{prop}
\label{res: criterium Novikov Adian}
	Let $w$ be a reduced word of $\free r$.
	If $w$ does not contain a subword of the form $u^{16}$ then $w$ induces a non-trivial element of $\burn rn$ for all odd exponents $n\geq 655$.
\end{prop}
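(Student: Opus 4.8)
The plan is not to reprove that the free Burnside groups are infinite, but to record how this criterion is extracted from S.I.~Adian's theory of periodic words in \cite{Adi79}. Fix an odd exponent $n \geq 655$. The backbone of that theory presents $\burn{r}{n}$ as the direct limit of a tower $\Gamma_0 = \free{r} \twoheadrightarrow \Gamma_1 \twoheadrightarrow \Gamma_2 \twoheadrightarrow \cdots$, where $\Gamma_i$ is obtained from $\Gamma_{i-1}$ by imposing the relations $A^n = 1$ for every \emph{elementary period of rank $i$} (a distinguished set, finite in each length, of cyclically reduced words $A$). To each rank $i$ Adian attaches a notion of a word being \emph{reduced in rank $i$}, and one of the main outputs of his simultaneous induction over the ranks is that a non-empty word which is reduced in rank $i$ is non-trivial in $\Gamma_i$. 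Passing to the limit, a non-empty word which is reduced in every rank is non-trivial in $\burn{r}{n}$. The first step of the proof is simply to quote this.

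The second step is the combinatorial translation. If a reduced word $w$ of $\free{r}$ fails to be reduced in some rank, then it must contain a subword $B$ which is a periodic word with some period $A$ --- i.e.\ a subword of $A^{\infty}$ --- of length more than $17\lvert A\rvert$. (In Adian's framework the length of such a forced periodic subword is a fixed positive fraction of $n\lvert A\rvert$, which for $n \geq 655$ dwarfs $17\lvert A\rvert$.) Writing $B = A_1 A^{k} A_2$ with $A_1$ a proper suffix and $A_2$ a proper prefix of $A$, one has $(k+2)\lvert A\rvert > \lvert B\rvert > 17\lvert A\rvert$, hence $k \geq 16$, so $B$, and therefore $w$, contains $A^{16}$: a subword of the form $u^{16}$. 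Contrapositively, a reduced word $w$ containing no subword $u^{16}$ is reduced in every rank, hence non-trivial in $\burn{r}{n}$ for every odd $n \geq 655$, which is the assertion.

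The main obstacle is wholly inside the first step: the implication ``reduced in all ranks $\Rightarrow$ non-trivial'' is the substance of the several hundred pages of interlocking inductive lemmas in \cite{Adi79}, and there is no shortcut. When invoking it, the only thing left to verify is the numerology --- that the power $u^{16}$ and the threshold $n \geq 655$ are compatible with Adian's parameters (the lower bound on the odd exponent, the fractional constants governing when a periodic subword triggers a reduction, and so on) --- and this is a matter of tracking his constants faithfully rather than of any fresh idea. We therefore do not reproduce the argument and refer to \cite{Adi79}; this is the criterion used by E.A.~Cherepanov in \cite{Che05}.
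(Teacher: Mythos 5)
The paper gives no proof of this proposition; it is quoted directly from Adian's monograph \cite{Adi79}, which is precisely what you do. Your sketch of Adian's rank-by-rank reduction machinery, and of the step extracting a $16$-th power from a forced long periodic subword, is a fair account of what the citation entails, and your closing caveat --- that the only remaining task is to track Adian's numerical parameters --- is exactly right.
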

In particular two distinct reduced words without 8-th power define distinct elements of $\burn rn$.
Compute now the orbit of $b$ under the automorphism $\phi$ of $\mathbf F(a,b)$ defined by $\phi(a)=ab$ and $\phi(b)=a$.
It leads to the following sequence of words 
\begin{displaymath}
	\begin{array}{lclclcl}
	\phi^1(b) & = & a			&\quad 	& \phi^5(b) & = & abaababa\\
	\phi^2(b) & = & ab			&		& \phi^6(b) & = & abaababaabaab\\
	\phi^3(b) & = & aba			&		& \phi^7(b) & = & abaababaabaababaababa\\
	\phi^4(b) & = & abaab		&		& \dots & &
	\end{array}
\end{displaymath}
None of these words  contains a 4-th power \cite{Karhumaki:1983fy}.
Therefore they induce pairwise distinct elements of $\burn rn$.
In particular, as an automorphism of $\burn rn$, $\phi$ has infinite order.

\paragraph{}
This argument can be generalized for any exponentially growing automorphism of $\free 2$ using an appropriate train track representative.
However, it does not work anymore in higher rank.
Consider for instance the exponentially growing automorphism $\psi$ of $\mathbf F (a,b,c,d)$ defined by $\psi(a)=a$, $\psi(b)=ba$, $\psi(c)=cbcd$ and $\psi(d)=c$.
As previously we compute the orbit of $d$ under $\psi$.
\begin{displaymath}
	\begin{array}{lcl}
	\psi^1(d) & = & c			\\
	\psi^2(d) & = & c\mathbf bcd			\\
	\psi^3(d) & = & cbcd\mathbf b\mathbf acbcdc			\\
	\psi^4(d) & = & cbcdbacbcdc\mathbf b\mathbf a^2cbcdbacbcdccbcd	\\
	\psi^5(d) & = & cbcdbacbcdcba^2cbcdbacbcdc^2bcd\mathbf b\mathbf a^3cbcdbacbcdcba^2 \dots \\
	&& \dots cbcdbacbcdc^2bcdcbcdbacbcdc
	\end{array}
\end{displaymath}
This orbit is exponentially growing.
Note that each time $\psi^p(d)$ contains a subword $ba^m$ then $\psi^{p+1}(d)$ contains $ba^{m+1}$.
Hence the $\psi^p(d)$'s contain arbitrarily large powers of $a$.
This cannot be avoided by choosing the orbit of another element.
\autoref{res: criterium Novikov Adian} is no more sufficient to tell us wether or not the $\psi^p(d)$'s are pairwise distinct in $\burn rn$.
Therefore, we need a more accurate criterion to distinguish two different elements of $\burn rn$.
This is done using elementary moves.

\paragraph{}
Let $n \in \N$ and $\xi \in \R_+$.
An \emph{$(n,\xi)$-elementary move} consists in replacing a reduced word of the form $pu^ms \in \free r$ by the reduced representative of $pu^{m-n}s$, provided $m$ is an integer larger than $n/2-\xi$.
Note that an elementary move may increase the length of the word.
\begin{theo}[Coulon {\cite[Theorem 4.12]{Coulon:2012tj}}]
\label{res: elementary moves}
	There exist integers $n_1$ and $\xi$ such that for all odd exponents $n \geq n_1$ we have the following property.
	Let $w$ and $w'$ be two reduced words of $\free r$.
	If $w$ and $w'$ define the same element of $\burn rn$ then there are two sequences of $(n,\xi)$-elementary moves which respectively send $w$ and $w'$ to the same word.
\end{theo}

\paragraph{} Thanks to this tool we can now explain using the example $\psi$ how our proof works.
We need to understand the effect of elementary moves on a word $\psi^p(d)$.
To that end we assign colors to the letters.
Let say that  $a$, $b$ are \emph{yellow} letters (dotted lines on \autoref{fig:yellow-red-intro}) whereas $c$, $d$ are \emph{red} letters (thick lines on the figure).
The word $\psi^p(d)$ is then the concatenation of maximal yellow and red subwords.
We define $\Red {\psi^p(d)}$ to be the red word obtained from $\psi^p(d)$ be removing all the yellow letters.
\begin{figure}[h]
	\centering
	\includegraphics[width=\textwidth]{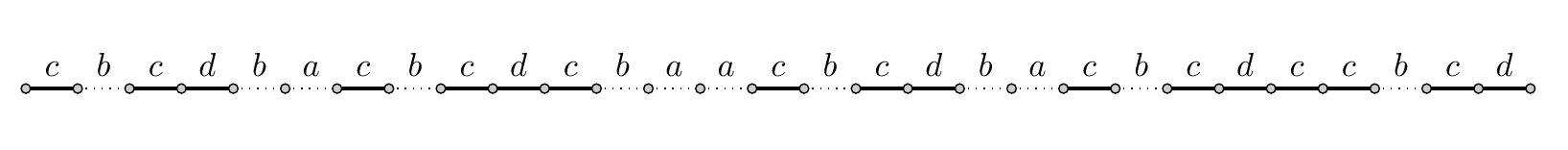}
	\caption{The yellow-red decomposition of $\psi^4(d)$}
	\label{fig:yellow-red-intro}
\end{figure}%
We claim that the elementary moves preserve $\Red {\psi^p(d)}$.
Since the orbit of $d$ grows exponentially one can prove that $\Red{\psi^p(d)}$ does not contain large powers.
More precisely there is an integer $n_2$ such that for all $p \in \N$, $\Red{\psi^p(d)}$ does not contain any $n_2$-th power (see \autoref{res: no power in the red}).
This fact can be interpreted in terms of dynamical properties of the attractive laminations associated to the automorphism $\psi$.
It follows from this remark that the power $u^m$ involved in any $(n,\xi)$-elementary move with $n > 2n_2+2\xi$ only contains yellow letters.
However such a move could completely collapse a yellow subword and then affect the red letters (see \autoref{fig:collapsing-intro})
\begin{figure}[h]
	\centering
	\includegraphics[scale=0.8]{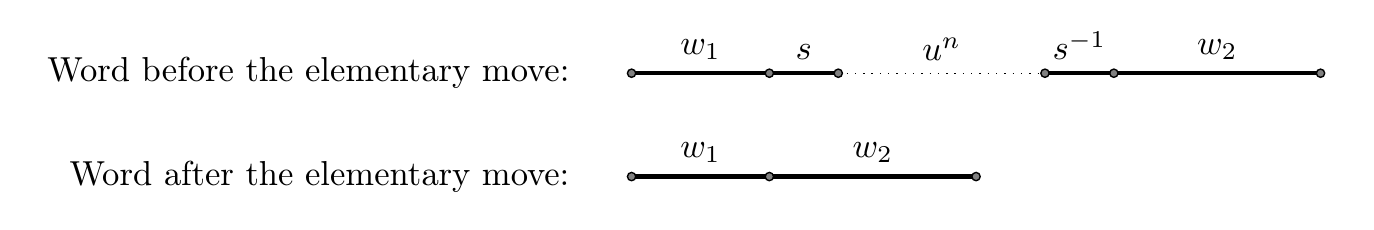}
	\caption{An elementary move collapsing red letters}
	\label{fig:collapsing-intro}
\end{figure}

\paragraph{}
Let us now have a look at the yellow subwords.
Notice that the image by $\psi$ of a yellow word is still a yellow word.
On the contrary, the image of a red word may contain yellow subwords (e.g. $\psi(c)$).
Therefore the yellow subwords of $\psi^p(d)$ can be sorted in two categories.
The ones which are yellow subwords of $\psi(c)$ and $\psi(d)$ and the ones which arise as the images by $\psi$ of yellow subwords of $\psi^{p-1}(d)$.
In particular, among all the $\psi^p(d)$'s, there is only a finite number of orbits under $\psi$ of maximal yellow subwords.
Consequently there is an integer $n_3$ such that for every odd  integer $n >n_3$, none of them becomes trivial in $\burn rn$. 

\paragraph{}We can now argue by contradiction.
Let $n > \max\{n_1, 2n_2+2\xi,n_3\}$ be an odd integer.
Assume that $\psi$ induces an automorphism of finite order of $\burn rn$.
In particular there exists $p \in \N^*$ such that $\psi^p(d)$ and $d$ have the same image in $\burn rn$.
It follows from \autoref{res: elementary moves} that a sequence of $(n,\xi)$-elementary moves sends $\psi^p(d)$ to $d$.
However $n > 2n_2+2\xi$, thus these moves will only change the yellow subwords of  $\psi^p(d)$.
Moreover, since $n > n_3$ none of the yellow words can completely disappear. 
Therefore the red word $\Red{\psi^p(d)}$ associated to $\psi^p(d)$ should be exactly $d$.
Contradiction.

\paragraph{}The proof for an arbitrary exponentially growing automorphism of $\free r$ follows the same ideas.
One has to replace the words in $a,b,c,d$ by paths in an appropriate relative train track.
This leads to a technical difficulty, though.
The red and yellow paths that we want to consider do not necessarily represent elements of the free groups.
This problem is handled in \autoref{sec: finite index subgroup} and \autoref{sec:no big powers}.
We use there subtle aspects of the machinery of train-tracks to show that the red words do not contain large powers (\autoref{res: no power in the red}).
In particular we need to pass to a finite index subgroup of $\free r$. 
This operation actually ensures at the same time that no yellow subpath will be removed by elementary moves (see the discussion before).
Beside this fact, the main ingredients are the ones described above.

\paragraph{Acknowledgment}
Most of this work was done while the first author was staying at the \emph{Max-Planck-Institut f\"ur Mathematik}, Bonn, Germany.
He wishes to express his gratitude to all faculty and staff from the MPIM for their support and warm hospitality. 
The second author would like to thank Michael Handel and Gilbert Levitt for helpful conversations.
The second author is supported by the grant ANR-10-JCJC 01010 of the Agence Nationale de la Recherche

\section{Primitive matrices and substitutions}
\label{sec:primitive}
\paragraph{}In this section we summarize a few properties about primitive integer matrices and substitutions on an alphabet that we be useful later.

\subsection{Primitive matrices}
\label{sec:primitive matrices}

\paragraph{}
A square matrix $M$ of size $\ell$ whose entries are non-negative integers is \emph{irreducible} if for each $i,j\in\{1,\dots,\ell\}$, there exists $p\in\N$ such that the $(i,j)$-entry of $M^p$ is not zero. 
It is \emph{primitive} when there exists $p\in\N$ such that any entry of $M^p$ is not zero.

\paragraph{}
The Perron-Frobenius theorem for an irreducible matrix $M$ with non-negative integer entries states that there exists a unique dominant eigenvalue $\lambda\geq1$ of $M$ associated to an eigenvector with positive coordinates (see for instance Seneta's book \cite{Seneta:2006wd}).
This $\lambda$ is called the \emph{Perron-Frobenius-eigenvalue} (or simply \emph{PF-eigenvalue}) of $M$.
In addition, if $\lambda = 1$ then $M$ is a transitive permutation matrix.

\subsection{Primitive substitutions}
\label{sec:primitive substitutions}

\paragraph{} Let $\mathcal A = \{a_1, \dots, a_\ell\}$ be a finite alphabet.
The free monoid generated by $\mathcal{A}$ is denoted by $\mathcal{A}^*$.
We write $1$ for the empty word, also called \emph{trivial word}.
An infinite word is an element of $\mathcal A^\N$.
Let $m \in \N^*$.
A word $w\in\mathcal{A}^*$ is an \emph{$m$-th power} if there exists a non-trivial word $u\in\mathcal{A}^*$ such that $w=u^m$. 
A non-trivial word $w\in\mathcal{A}^*$ is \emph{primitive} if it is not an $m$-th power with $m$ at least $2$ (i.e. if $w=u^m$, then $u=w$ and $m=1$).
A word $w\in\mathcal{A}^*$ (or an infinite word $w \in \mathcal A^\N$) \emph{contains an $m$-th power}, if there exists a word $u \in \mathcal A^*\setminus\{1\}$ such that $u^m$ is a subword of $w$.
The \emph{shift} is the map $S : \mathcal A^\N \rightarrow \mathcal A^\N$ which sends $(w_i)_{i \in \N}$ to $(w_{i+1})_{i \in \N}$.
An infinite word $w$ is said to be \emph{shift-periodic} if there exists $q\in \N^*$ such that $S^q(w) = w$.
If $u$ stands for the word $w_0w_1\cdots w_{q-1}$ then we write $w=u^\infty$.
Roughly speaking it means that $w$ is the infinite power of $u$.

\paragraph{}A morphism of the free monoid $\mathcal{A}^*$ is called a \emph{substitution} defined on $\mathcal{A}$.
Such a substitution $\sigma$ is indeed completely determined by the images $\sigma(a)\in\mathcal{A}^*$ of all the letters $a\in\mathcal{A}$.
Moreover, it naturally extends to a map $\mathcal A^\N \rightarrow \mathcal A^\N$.
The \emph{matrix $M$ of a substitution} $\sigma$ is a square matrix of size $\ell$ whose $(i,j)$-entry is the number of occurrences of the letter $a_i$ in the word $\sigma(a_j)$.
The substitution $\sigma$ is said to be \emph{primitive} when $M$ is primitive.

\begin{prop}
	\label{res: no big power in the orbit of the substitution}
	Let $a$ be a letter of $\mathcal A$.
	Let $\sigma$ be a primitive substitution on $\mathcal A$ such that $a$ is a prefix of $\sigma(a)$.
	\begin{enumerate}[(i)]
		\item \label{enu: no big power in the orbit of the substitution - converge}
		The sequence $(\sigma^p(a))$ converges to an infinite word $\sigma^\infty(a)$ fixed by $\sigma$.
		\item \label{enu: no big power in the orbit of the substitution - not shift-preiodic}
		If $\sigma^\infty(a)$ is not shift-periodic, then there exists an integer $m \geq 2$ such that for all $p \in \N$, $\sigma^p(a)$ does not contain an $m$-th power.	
		\item \label{enu: no big power in the orbit of the substitution - shift-periodic}
		If there exists a non-trivial primitive word $u$ such that $\sigma^\infty(a)=u^\infty$ then there exists an integer $q\geq2$ such that $\sigma(u)=u^q$.
	\end{enumerate}
\end{prop}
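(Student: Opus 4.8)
The plan is to establish the three assertions separately; (i) and (iii) are short, and the substance lies in (ii).

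\emph{Assertion (i).} Since $a$ is a prefix of $\sigma(a)$ and applying $\sigma$ preserves the prefix order, an immediate induction shows that $\sigma^p(a)$ is a prefix of $\sigma^{p+1}(a)$ for all $p$, so the $\sigma^p(a)$ form a nested family, each a prefix of all later ones. As soon as $\sigma^\infty(a)$ is a genuine infinite word, the Perron--Frobenius eigenvalue $\lambda$ of the matrix $M$ of $\sigma$ is $>1$ (if $\lambda=1$ then $M$ is a transitive permutation matrix, $\sigma$ permutes the letters and $\sigma(a)=a$), hence $|\sigma^p(a)|$, the sum of the $a$-column of $M^p$, tends to infinity. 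Therefore the $\sigma^p(a)$ converge in $\mathcal A^\N$ to an infinite word $\sigma^\infty(a)$ having each $\sigma^p(a)$ as a prefix, and since $\sigma$ is non-erasing (no column of $M$ is zero) it is continuous on $\mathcal A^\N$, so $\sigma(\sigma^\infty(a))=\lim_p\sigma(\sigma^p(a))=\lim_p\sigma^{p+1}(a)=\sigma^\infty(a)$.

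\emph{Assertion (ii).} Put $w=\sigma^\infty(a)$. Every finite factor of $w$ is a factor of some $\sigma^p(a)$, so it suffices to produce $m\geq2$ such that no $m$-th power is a factor of $w$; and since a general $m$-th power is a power of a primitive one, it is enough to bound the exponent $p$ of factors $u^p$ with $u$ primitive. I would invoke two classical features of a fixed point of a primitive substitution: $w$ is uniformly recurrent, and it is linearly recurrent, i.e.\ there is a constant $K$ such that every factor of $w$ of length $n$ occurs in every factor of $w$ of length $Kn$ (this rests on the Perron--Frobenius estimate $|\sigma^N(b)|\asymp\lambda^N$, which makes block lengths in the decompositions $w=\sigma^N(w_0)\sigma^N(w_1)\cdots$ comparable; alternatively it is a special case of Mossé's theorem on powers in substitutive sequences). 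Because $w$ is not shift-periodic, uniform recurrence prevents $w$ from being eventually periodic. Now suppose $u^p$ is a factor of $w$ with $u$ primitive, $|u|=d$ and $p\geq 3K$; an occurrence, say at positions $[t,t+pd)$, lies inside a maximal $d$-periodic stretch $[\alpha,\beta)$ of $w$, with $\alpha\leq t$, $\beta\geq t+pd$, and $\beta$ finite. Maximality gives $w_\beta\neq w_{\beta-d}$, so the word $C:=w_{\beta-2d}\cdots w_{\beta}$ of length $2d+1$ has its last letter different from the letter $d$ places before. By linear recurrence $C$ occurs inside the factor $w_{\beta-3Kd}\cdots w_{\beta-1}$ (of length $3Kd\geq K(2d+1)$, and lying in $[\alpha,\beta)$ since $\beta-3Kd\geq t\geq\alpha$), say at a position $\rho$ with $\rho+2d\leq\beta-1$; then $[\rho,\rho+2d]\subseteq[\alpha,\beta)$, where $w$ has period $d$, so $w_{\rho+2d}=w_{\rho+d}$, while these are the last letter of $C$ and its $d$-predecessor, which differ --- a contradiction. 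Hence every power occurring in $w$ has exponent $<3K$, and $m:=3K$ works.

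\emph{Assertion (iii).} Suppose $\sigma^\infty(a)=u^\infty$ with $u$ primitive and non-trivial. By (i), $u^\infty=\sigma(u^\infty)=\sigma(u)^\infty$. Since $u$ is primitive, $|u|$ is the smallest period of $u^\infty$ and every period of $u^\infty$ is a multiple of $|u|$ (Fine--Wilf for infinite words); in particular $|\sigma(u)|=q|u|$ for some integer $q\geq1$, and then $\sigma(u)$, being the length-$q|u|$ prefix of $\sigma(u)^\infty=u^\infty$, equals $u^q$. If $q=1$ then $\sigma(u)=u$, so the abelianisation of $u$ is a non-zero non-negative integer eigenvector of $M$ with eigenvalue $1$; but $M$ is primitive, so by Perron--Frobenius its only non-negative eigenvector is the one with eigenvalue $\lambda\neq1$, a contradiction. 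Hence $q\geq2$.

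The delicate point is the core of (ii): both the derivation of linear recurrence from primitivity and the "a long periodic run must contain a copy of its own broken boundary" argument have to be run with honest constants; but there is no conceptual obstacle once linear recurrence is available. Assertions (i) and (iii) reduce to the monotone-prefix remark and to Fine--Wilf plus Perron--Frobenius, respectively.
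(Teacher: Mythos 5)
Your proof is correct in outline, but it takes a meaningfully different route from the paper, most notably in part~(ii).

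For~(ii), the paper simply quotes Moss\'e's dichotomy (Proposition~\ref{res: proposition mosse}): a fixed point of a primitive substitution is either shift-periodic or has a uniform bound on the exponent of its powers. You instead outline a self-contained derivation: first, the fixed point is linearly recurrent with some constant~$K$; then, a periodic run of exponent~$\geq 3K$ would contain, by linear recurrence, an interior copy of the length-$(2|u|+1)$ word straddling the right boundary of the maximal $|u|$-periodic stretch, forcing the boundary mismatch $w_\beta\neq w_{\beta-|u|}$ into the interior where it cannot hold. That boundary argument is sound (and is essentially the standard way of deducing the power bound from linear recurrence). But linear recurrence of primitive-substitution fixed points is itself a nontrivial theorem (Durand et al.); it does not fall straight out of the Perron--Frobenius length estimates as your parenthetical suggests, and your fallback remark that it is ``a special case of Moss\'e's theorem on powers'' is essentially circular, since Moss\'e's no-large-power statement \emph{is} what part~(ii) asserts. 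So in effect you have replaced one citation with another of comparable depth plus a combinatorial argument; this is a legitimate alternative, but it should either cite linear recurrence honestly or reproduce Moss\'e's actual proof.

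For~(iii), the paper shows $\sigma(u)=uw_0$ using $|\sigma(u)|\geq|u|$ and the prefix relation, then applies Moss\'e's Lemma~\ref{lem: mosse} to get $w_0=u^p$, and finally rules out $p=0$ by the direct length count $|\sigma(u)|=|aw\sigma(u')|>|u|$. You instead observe that $|\sigma(u)|$ is a period of $u^\infty$, hence a multiple $q|u|$, read off $\sigma(u)=u^q$ as a prefix, and exclude $q=1$ via the abelianisation and Perron--Frobenius (a non-negative eigenvector of a primitive matrix must be the PF one, eigenvalue $\lambda>1$). Both are valid; your route trades the explicit Moss\'e lemma for the Fine--Wilf periodicity lemma, and trades the elementary length comparison for a linear-algebra argument, which is slightly heavier but illuminating. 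Part~(i) is the same monotone-prefix argument as the paper; your additional remarks about $\lambda$ are harmless but not needed once one notes, as the paper does, that primitivity forces the suffix $w$ in $\sigma(a)=aw$ to be non-trivial.
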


\begin{remark}
\label{rem: primitive shift periodic substitution}
The case covered by Point~(\ref{enu: no big power in the orbit of the substitution - shift-periodic}) in the proposition is not vacuous. 
Consider for instance the substitution defined on $\mathcal A = \{a,b,c\}$ by $\sigma(a) = ab$, $\sigma(b) = c$ and $\sigma(c) = abc$.
The transition matrix $M$ of $\sigma$ and its square are the followings
\begin{equation*}
	M = \left(
	\begin{array}{ccc}
		1 & 0 & 1 \\
		1 & 0 & 1 \\
		0 & 1 & 1 \\
	\end{array}
	\right)
	\quad
	M^2 = \left(
	\begin{array}{ccc}
		1 & 1 & 2 \\
		1 & 1 & 2 \\
		1 & 1 & 2 \\
	\end{array}
	\right)
\end{equation*}
In particular, $\sigma$ is primitive.
However $(\sigma^n(a))$ converges to the infinite shift-periodic word $(abc)^{\infty}$.
\end{remark}

\paragraph{}To prove \autoref{res: no big power in the orbit of the substitution} we use the following results due to B.~Moss\'e.

\begin{prop}[Moss\'e {\cite[Th\'eor\`eme 2.4]{Mos92}}]
	\label{res: proposition mosse}
	Let $\sigma$ be a primitive substitution on a finite alphabet $\mathcal{A}$.
	Let $u\in\mathcal{A}^\N$ be an infinite word fixed by $\sigma$.
	Then either
	\begin{enumerate}[(i)]
	  \item $u$ is shift-periodic, or
	  \item there exists an integer $m \geq 2$ such that $u$ does not contain an $m$-th power. \qedhere
	\end{enumerate}
\end{prop}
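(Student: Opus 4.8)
The plan is to show that a fixed point $u$ of a primitive substitution $\sigma$ is either shift-periodic (alternative (i)) or \emph{linearly recurrent} while its subshift is aperiodic, and then to extract a bound on the exponents of powers occurring in $u$ from a return-word estimate, giving alternative (ii). First dispose of the degenerate case: if the Perron--Frobenius eigenvalue $\lambda$ of the matrix $M$ of $\sigma$ equals $1$, then by the last remark of \autoref{sec:primitive matrices} $M$ is a transitive permutation matrix, so each $\sigma(a_i)$ is a single letter and $\sigma$ acts on $\mathcal{A}$ as a transitive permutation $\pi$; the equality $u=\sigma(u)$ then reads $u_k=\sigma(u_k)$ for all $k$, forcing every letter occurring in $u$ to be a fixed point of $\pi$, which is impossible unless $|\mathcal{A}|=1$, in which case $u$ is constant, hence shift-periodic. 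So assume $\lambda>1$; since a shift-periodic $u$ already gives alternative (i), assume also that $u$ is not shift-periodic, so the subshift $X_u$ it generates is aperiodic (for the uniformly recurrent $u$ this is the same as saying $X_u$ is not a finite orbit). By Perron--Frobenius and primitivity there are constants $0<c\le C$ and $p_0\in\N$ with $c\lambda^{p}\le|\sigma^{p}(a_i)|\le C\lambda^{p}$ for all $i,p$ and with $\sigma^{p_0}(a_i)$ containing every letter.

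It is then classical (see e.g. Durand, or Queff\'elec's book) that $u$ is \emph{linearly recurrent}: there is $K\ge1$ such that every factor $v$ of $u$ occurs in every factor of $u$ of length $K|v|$. The idea: a factor $v$ of $u=\sigma^{N}(u)$ with $\lambda^{N}$ of order $|v|$ spans only a bounded number of blocks $\sigma^{N}(u_j)$, hence is a factor of $\sigma^{N}(z)$ for a factor $z$ of $u$ of bounded length; primitivity makes $z$, and therefore $v$, a factor of $\sigma^{N+p_1}(a_i)$ for every $i$ and a fixed $p_1$; then in the decomposition $u=\sigma^{N+p_1}(u)$ every block has length $O(\lambda^{N})=O(|v|)$ and contains $v$, so consecutive occurrences of $v$ recur within $O(|v|)$. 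Moreover, since $X_u$ is aperiodic, one has the complementary return-word estimate (Durand--Host--Skau): after enlarging $K$ if necessary, \emph{every return word $r$ of every factor $v$ of $u$ satisfies $|r|\ge|v|/K$}.

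Now the main step. Suppose for contradiction that $u$ contains an $m$-th power for every $m\ge2$; replacing the base of a power by its primitive root only increases the exponent, so for every $m$ there is a \emph{primitive} word $w$ with $w^{m}$ a factor of $u$. Fix such $m\ge2$ and $w$, and fix an occurrence of $w^{m}$ in $u$. Inside this occurrence the factor $v:=w^{m-1}$ occurs at offsets $0$ and $|w|$; I claim these are the only ones. An occurrence at offset $j$ with $0<j<|w|$ would make the length-$(j+(m-1)|w|)$ prefix of the occurrence of $w^m$ carry both the period $j$ and, being a factor of $w^{m}$, the period $|w|$; its length is at least $j+|w|$, so the theorem of Fine and Wilf forces the period $\gcd(j,|w|)<|w|$; but this prefix contains $w$, contradicting the primitivity of $w$. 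Hence $|w|$ is a return time of $v$ in $u$, so $v$ has a return word of length $|w|$, and the return-word estimate gives $|w|\ge|v|/K=(m-1)|w|/K$, i.e. $m\le K+1$. Taking $m:=\lceil K\rceil+2$ is the desired contradiction. Therefore there is an integer $m\ge2$ such that $u$ contains no $m$-th power, which is alternative (ii).

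The only non-elementary inputs are linear recurrence of primitive-substitution fixed points and the lower bound $|r|\ge|v|/K$ on return words in aperiodic linearly recurrent subshifts; the latter is where aperiodicity is genuinely used and is the main obstacle to a fully self-contained write-up. It can, however, be proved directly: a very short return word $r$ of a long factor $v$ would, on iteration, make $u$ agree with $r^{\infty}$ on windows far longer than linear recurrence (applied to longer and longer factors) permits, forcing $u=r^{\infty}$, contrary to aperiodicity. The classical alternative is to derive everything from Moss\'e's recognizability theorem for aperiodic primitive substitutions, but the linear-recurrence route above is shorter; the remaining bookkeeping (the constants $c,C,K,p_0,p_1$ and the passage to primitive roots) is routine.
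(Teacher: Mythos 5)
The paper does not actually prove this proposition: it is quoted from Moss\'e \cite[Th\'eor\`eme 2.4]{Mos92}, so there is no in-paper argument to compare against. Your proof is correct, and it follows the now-standard Durand--Host--Skau route rather than Moss\'e's original one. The reduction to the aperiodic case is sound (a primitive permutation matrix forces $|\mathcal{A}|=1$; and for a uniformly recurrent $u$, ``not shift-periodic'' does imply the orbit closure is aperiodic --- this silently uses minimality, which primitivity supplies, and deserves one explicit sentence). The two external inputs --- linear recurrence of fixed points of primitive substitutions, and the lower bound $|r|\geq |v|/K$ for return words in aperiodic linearly recurrent subshifts --- are genuine theorems and are applied correctly. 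The one step you work out in full, namely that inside an occurrence of $w^m$ with $w$ primitive the factor $v=w^{m-1}$ occurs only at offsets $0$ and $|w|$, so that $|w|$ is a return time of $v$, is handled correctly by Fine--Wilf: the prefix of length $j+(m-1)|w|$ carries periods $j$ and $|w|$ and is long enough, so it has period $\gcd(j,|w|)<|w|$ dividing $|w|$, contradicting primitivity of $w$; this gives $m\leq K+1$ and hence alternative (ii). By contrast, Moss\'e's own proof stays inside substitution combinatorics, using desubstitution/recognizability together with the $uwu\subset u^m$ lemma that the paper quotes separately as \autoref{lem: mosse}; your route is conceptually cleaner (aperiodic linearly recurrent subshifts are power-free with bounded exponent) but imports the return-word machinery, which postdates Moss\'e's paper. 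As a proof of the cited statement it is complete modulo those two standard references.
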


\begin{lemm}[Moss\'e {\cite[Proposition 2.3]{Mos92}}]
\label{lem: mosse}
	Let $u\in\mathcal{A}^*$ be a primitive word.
	Let $m \geq 2$ be an integer.
	If $uwu$ is subword of $u^m$, then there exists an integer $p\geq0$ such that $w=u^p$.
	\qedhere
\end{lemm}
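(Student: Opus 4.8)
The plan is to argue by pure combinatorics on words, the decisive point being that a primitive word cannot occur inside a power of itself in a ``misaligned'' position. Write $\ell=|u|$ and $u=u_0u_1\cdots u_{\ell-1}$, so that $u^m=v_0v_1\cdots v_{m\ell-1}$ with $v_k=u_{k\bmod\ell}$. Suppose the occurrence of $uwu$ inside $u^m$ begins at position $i$; since $uwu$ is a subword of $u^m$ we have $i+2\ell+|w|\leq m\ell$, so every index appearing below lies in $[0,m\ell)$ and the identity $v_k=u_{k\bmod\ell}$ may be used freely.

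First I would isolate the following alignment fact: if a copy of $u$ occurs in $u^m$ starting at a position $j$ with $j+\ell\leq m\ell$, then $\ell\mid j$. Indeed, setting $r=j\bmod\ell$, the equality $v_jv_{j+1}\cdots v_{j+\ell-1}=u$ reads $u_{(r+t)\bmod\ell}=u_t$ for all $0\leq t<\ell$; that is, $u$, viewed as a map $\Z/\ell\Z\to\mathcal A$, is invariant under the cyclic shift by $r$. The set of such shifts forms a subgroup of $\Z/\ell\Z$, hence contains $d:=\gcd(r,\ell)$, so $u$ is invariant under the shift by $d$, and since $d\mid\ell$ this yields $u=(u_0\cdots u_{d-1})^{\ell/d}$. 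If $r\neq0$ then $0<d\leq r<\ell$, whence $\ell/d\geq2$ and $u$ is a proper power, contradicting primitivity; therefore $r=0$.

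It then remains to apply this fact to the two copies of $u$ inside $uwu$. The left copy starts at $i$, so $\ell\mid i$; the right copy starts at $i+\ell+|w|$, so $\ell\mid(i+\ell+|w|)$; subtracting gives $\ell\mid|w|$, say $|w|=p\ell$ with $p\geq0$. The subword $w$ then occupies positions $i+\ell,\dots,i+\ell+p\ell-1$ of $u^m$, and since $\ell\mid(i+\ell)$ we read off $w=v_{i+\ell}\cdots v_{i+\ell+p\ell-1}=u^p$, which is the assertion (the hypothesis $m\geq2$ serving only to ensure that $u^m$ is long enough to contain $uwu$). I expect the only genuinely substantive point to be the alignment fact of the second paragraph — the standard statement that a primitive word admits no nontrivial cyclic period — so that is the step I would write out most carefully; everything else is bookkeeping with residues modulo $\ell$.
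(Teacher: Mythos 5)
Your proof is correct. Note that the paper itself does not prove this lemma --- it is imported verbatim from Moss\'e's article (Proposition 2.3 of \cite{Mos92}) --- so there is no in-paper argument to compare against; what you have written is a complete, self-contained combinatorial proof. The ``alignment fact'' you isolate (a primitive word $u$ occurs in $u^m$ only at positions divisible by $|u|$) is indeed the whole content, and your derivation of it via the subgroup of invariant cyclic shifts of $\Z/\ell\Z$ is sound: the residue $r$ forces invariance under the shift by $d=\gcd(r,\ell)$, hence a period $d$ dividing $\ell$, hence a proper power unless $r=0$. The remaining bookkeeping (both copies of $u$ in $uwu$ start at multiples of $\ell$, so $\ell$ divides $|w|$ and $w$ is read off as $u^p$) is carried out correctly, including the degenerate case $w=1$, $p=0$.
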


\begin{proof}[Proof of \autoref{res: no big power in the orbit of the substitution}]
	By assumption $a$ is a prefix of $\sigma(a)$.
	Thus there exists $w \in \mathcal A^*$ such that $\sigma(a) = aw$.
	Since $\sigma$ is primitive, $w$ is not trivial.
	It follows that for every $p \in \N^*$, $\sigma^p(a)$ is exactly the word
	\begin{displaymath}
		\sigma^p(a) = aw\sigma(w)\sigma^2(w)\dots\sigma^{p-1}(w).	
	\end{displaymath}
	In particular, $\sigma^p(a)$ is a prefix of $\sigma^{p+1}(a)$.
	Therefore, $(\sigma^p(a))$ converges to an infinite word $\sigma^\infty(a)$ fixed by $\sigma$:
	\begin{displaymath}
		\sigma^\infty(a) = aw\sigma(w)\sigma^2(w)\dots\sigma^p(w)\dots
	\end{displaymath}
	which proves (i).
	Assume now that this infinite word is not shift-periodic. 
	According to \autoref{res: proposition mosse}, there exists $m \geq 2$ such that $\sigma^\infty(a)$ does not contain an $m$-th power.
	The same holds for the prefixes of $\sigma^\infty(a)$, in particular, for all $\sigma^p(a)$, which proves (ii).
	
	\paragraph{}
	Finally, assume that $\sigma^\infty(a)=u^\infty$, where $u$ is a non-trivial primitive word.
	Since $\sigma^\infty(a)$ is fixed by $\sigma$, we obtain that $u^\infty=\sigma(u)^\infty$.
	In particular, $u$ is a prefix of $\sigma^\infty(u)$.
	The substitution $\sigma$ being primitive, $\sigma(u)$ is not shorter than $u$.
	We derive that there exists $w_0 \in \mathcal A^*$ such that $\sigma(u)=uw_0$.
	Hence $u^\infty=(uw_0)^\infty$. 
	\autoref{lem: mosse} shows that there exists $p \in \N$ satisfying $w_0=u^p$.
	Thus $\sigma(u)=u^{p+1}$.
	Remember that $a$ is a prefix of $u$.
	Hence $u$ can be written $u=au'$.
	It follows that the length of $\sigma(u)=aw\sigma(u')$ is larger than the one of $u$. 
	Thus $p +1 \geq 2$, which proves (iii).
\end{proof}

\section{Train-tracks and automorphisms of free groups}

\paragraph{}
In this section, we recollect some material about relative train-track maps.
Details can be found in \cite{BesHan92} where they have been introduced by M.~Bestvina and M.~Handel.
There exist several improvements of relative train-track maps, and we will use here (very few of) improved relative train-track maps introduced by M.~Bestvina, M.~Feighn and M.~Handel in \cite{BesFeiHan00}.

\subsection{Paths and circuits}

\paragraph{}
The graphs that we consider are metric graphs with oriented edges.
By metric graph, we mean a graph equipped with a path metric.
If $e$ is an edge of a graph $G$, then $e\inv$ stands for the edge with the reverse orientation.
The pair $\{e,e\inv\}$ is the \emph{unoriented edge associated to $e$} (or $e\inv$).
By abuse of notation, we will just say the  \emph{unoriented edge $e$} for the pair $\{e,e\inv\}$.
Let $\Theta:\mathcal E\rightarrow \mathcal E$ be the map defined by $\Theta(e)=e\inv$.
Sometimes, it will be useful to consider a subset $\vec{\mathcal E}$ of $\mathcal E$ such that $\vec{\mathcal E}$ and $\Theta(\vec{\mathcal E})$ give rise to a partition of $\mathcal E$ (i.e. we choose a preferred oriented edge for each unoriented edge).
We call such a set $\vec{\mathcal E}$ a {\em preferred set of oriented edges for} $G$.

\paragraph{}
A \emph{path} in a graph $G$ is a continuous locally injective map $\alpha:I\rightarrow G$, where $I=[a,b]$ is a segment of $\R$. 
The \emph{initial point} of $\alpha$ is $\alpha(a)$ and its \emph{terminal point} is $\alpha(b)$; both $\alpha(a)$ and $\alpha(b)$ are the \emph{endpoints} of $\alpha$.
We do not make any difference between two paths which differ from an orientation preserving homeomorphism between their domains. 
A path is \emph{trivial} if its domain is a point.
When the endpoints of $\alpha$ are vertices, $\alpha$ can be viewed as a \emph{path of edges}, i.e. a concatenation of edges: $\alpha= e_1\dots e_p$ where the $e_i$ are edges of $G$ such that the terminal vertex of $e_i$ is the initial vertex of $e_{i+1}$ and $e_{i}\neq e_{i+1}\inv$.
A \emph{circuit} in $G$ is a continuous locally injective map of an oriented circle into $G$.
We do not make any difference between two circuits which differ from an orientation preserving homeomorphism between their domains.
A circuit can be viewed as a concatenation of edges which is well defined up to cyclic permutation.
If $\alpha$ is a path, or a circuit, we denote by $\alpha^{-1}$ the path, or circuit, with the reverse orientation.

\paragraph{}
A continuous map $\alpha:I\rightarrow G$, where $I$ is segment in $\R$, is homotopic relatively to the endpoints to a unique path denoted by $[\alpha]$.
A non homotopically trivial continuous map $\alpha:S^1\rightarrow G$ is homotopic to a unique circuit denoted by $[\alpha]$.

\subsection{Topological representatives}

\paragraph{Marked graphs and topological representatives.}
Let $r \geq 2$.
We denote by $R_r$ the \emph{rose} of rank $r$.
It is a graph with one vertex $\star$ and $r$ unoriented edges.
The fundamental group $\pi_1(R_r,\star)$ is the free group $\free{r}$,
with basis given by a preferred set of oriented edges.
A \emph{marked graph} $(G,\tau)$ (often simply denoted by $G$) is a connected metric graph $G$ having no vertex of valence $1$, equipped with a homotopy equivalence $\tau: R_r\rightarrow G$.
This homotopy equivalence $\tau$ gives an identification of the fundamental group $\pi_1(G)$ with $\free{r}$, well defined up to an inner automorphism.
A \emph{topological representative} of an outer automorphism $\Phi\in\out{\free{r}}$ is a homotopy equivalence $f:G\rightarrow G$ of a marked graph $(G,\tau)$ such that:
\begin{itemize}
	\item $f$ takes vertices to vertices and edges to paths of edges, 
	\item $\tau^-\circ f\circ \tau: R_r\rightarrow R_r$ induces $\Phi$ on $\free{r}=\pi_1(R_r,\star)$, where $\tau^-$ is a homotopy inverse of $\tau$.
\end{itemize}
In particular, the restriction of $f$ to an open edge is locally injective.

\paragraph{Induced map on paths and circuits.}

If $\alpha$ is a path or a circuit in $G$, one defines $f_\#(\alpha)$ as being equal to  $[f(\alpha)]$.

\paragraph{Legal turns.}

For any edge $e$ of $G$, $Df(e)$ denotes the first edge of $f(e)$.
A \emph{turn} is a pair of edges $(e_1,e_2)$ of $G$ which have the same initial vertex.
The turn $(e_1,e_2)$ is \emph{degenerate} if $e_1=e_2$, non degenerate otherwise.
A turn $(e_1,e_2)$ is \emph{legal} if for all $p \in\N$, $((Df)^p(e_1),(Df)^p(e_2))$ is non  degenerate; otherwise, the turn is \emph{illegal}.


\subsection{Lifts}\label{sec:lifts}

Let $f:G\rightarrow G$ be a topological representative of $\Phi\in\out{\free{r}}$.
Let $\tilde{G}$ be the universal cover of $G$.
Galois' covering theory gives a one-to-one correspondence between the set of the  lifts of $f$ to $\tilde{G}$ and the set of automorphisms in the outer class $\Phi$.
More precisely, a lift $\tilde{f}$ of $f$ is in correspondence with the automorphism $\phi\in\Phi$ if:
\begin{equation}\label{eq:phi-tilde f}
\tilde{f}\circ g = \phi(g)\circ \tilde{f}, \;\;\;\;\forall g\in\free{r}
\end{equation}
where the elements of $\free{r}$ are viewed as deck transformations of $\tilde{G}$.

\subsection{Invariant filtrations and transition matrices}

Let $f:G\rightarrow G$ be a topological representative of $\Phi\in\out{\free{r}}$.

\paragraph{Filtration, strata and $k$-legal paths.}

A \emph{filtration} of a topological representative $f:G\rightarrow G$ is a strictly increasing sequence of $f$-invariant subgraphs $\emptyset=G_0 \subset G_1 \subset \dots \subset G_m=G$.
The \emph{stratum of height $k$} denoted by $H_k$ is the closure of $G_k \setminus G_{k-1}$.
The edges \emph{of height $k$} are the edges of $H_k$.
A path \emph{of height $k$} is a path in $G_k$ which crosses $H_k$ non trivially, i.e. its intersection with $H_k$ contains a non-trivial path.
A path (of edges) $\alpha$ is \emph{$k$-legal} if it is a path of $G_k$ and for all subpath $e_1e_2$ of $\alpha$ with $e_1,e_2$ edges of height $k$, the turn $(e_1\inv,e_2)$ is legal.

\paragraph{Transition matrices.}

A \emph{transition matrix} $M_k$ is associated to the stratum $H_k$.
We choose a preferred set of oriented edges for $H_k$: $\vec{\mathcal E}= \{ e_1,\dots,e_\ell \}$
(where $\ell$ is the number of unoriented edges of $H_k$).
The transition matrix $M_k$ of $H_k$ is a square matrix of size $\ell$ whose $(i,j)$-entry is number of times the edge $e_i$ or  the reverse edge $e_i^{-1}$ occur in the path $f(e_j)$.

\paragraph{}
The stratum $H_k$ is \emph{irreducible} when its transition matrix $M_k$ is irreducible.
Let $\lambda_k$ be the PF-eigenvalue of $M_k$ -- see \autoref{sec:primitive matrices}.
If $\lambda_k>1$, then $H_k$ is called an \emph{exponential stratum}.
If $M_k$ is primitive, $H_k$ is said \emph{aperiodic}.
When the stratum $H_k$ is {irreducible} and $\lambda_k=1$, $H_k$ is called a \emph{non-exponential stratum}. 
When $M_k$ is the zero matrix, the stratum $H_k$ is called a \emph{zero stratum}.

\begin{remark}\label{rem:good RTT}
Given a topological representative $f:G\rightarrow G$ and an invariant filtration for $f$, up to refine the filtration, one can always suppose that any stratum is of one of three possible types: exponential, non-exponential or zero.
Moreover, up to replace $f$ by a positive power of $f$, one can assume --~see \cite{BesFeiHan00}~-- that 
\begin{itemize}
	\item each exponential stratum is aperiodic,
	\item each non-exponential stratum $H_k$ consists of a single edge $e$, and that $f(e)=eu$ where $u$ is loop in $G_{k-1}$  based at the endpoint of $e$.
\end{itemize}
\end{remark}

\subsection{A quick review on relative train-track maps}

\paragraph{Relative train-track maps.}

A topological representative $f:G\rightarrow G$ of an outer automorphism $\Phi\in\out{\free{r}}$ with a filtration $\emptyset=G_0 \subset G_1 \subset \dots \subset G_m=G$ is a \emph{relative train-track map} (RTT) if for every exponential stratum $H_k$:
\begin{enumerate}[(RTT-i)]
	\item \label{enu: RRT - Df}
	$Df$ maps the set of edges of height $k$ to itself  (in particular, each turn consisting of an edge of height $k$ and one of height  less than $k$  is legal);
	\item if $\alpha$ is a non-trivial path with endpoints in $H_k\cap G_{k-1}$, then $f_\#(\alpha)$ is a non-trivial path with endpoints in $H_k\cap G_{k-1}$;
	\item for each $k$-legal path $\alpha$, the path $f_\#(\alpha)$ is $k$-legal.
\end{enumerate}

In particular, an edge $e$ of an exponential stratum $H_k$ is $k$-legal.
Theorem 5.12 in \cite{BesHan92} ensures that any outer automorphism $\Phi$ of $\free r$ can be represented by an RTT $f$. 
By replacing if necessary $\Phi$ by a positive power of $\Phi$, one can suppose that $\Phi$ satisfies \autoref{rem:good RTT}.
In addition, we can ask that all the images of vertices are fixed by $f$ (see Theorem 5.1.5 in \cite{BesFeiHan00}). 
We sum up these facts in the following theorem.

\begin{theo}[Bestvina-Handel \cite{BesHan92}, Bestvina-Feighn-Handel \cite{BesFeiHan00}]
\label{thm:bestvina-handel}
	Let $\Phi$ be an outer automorphism of $\free{r}$.
	There exists $p\geq 1$ such that $\Phi^p$ has a topological representative $f:G\rightarrow G$ which is an RTT, with the properties that:
	\begin{itemize}
		\item for all vertices $v$ of $G$, $f(v)$ is fixed by $f$,
		\item every exponential stratum of $f$ is aperiodic.
		\item each non-exponential stratum $H_k$ consists of a single edge $e$, and that $f(e)=eu$ where $u$ is loop in $G_{k-1}$  based at the endpoint of $e$.
	\end{itemize}
\end{theo}

\paragraph{Splittings.}

Let $f:G\rightarrow G$ be a topological representative.
A \emph{splitting} of a path or a circuit $\alpha$ is a decomposition of $\alpha$ as a concatenation of subpaths $\alpha=\alpha_1\alpha_2\ldots\alpha_q$ (with $q\geq 1$ if $\alpha$ is a circuit, and $q\geq 2$ if $\alpha$ is a path) such that for all $p\geq 0$, $f^p_\#(\alpha)=f^p_\#(\alpha_1)f^p_\#(\alpha_2)\dots f^p_\#(\alpha_q)$.
In that case, one writes $\alpha=\alpha_1\cdot\alpha_2\cdot\ldots\cdot\alpha_q$ and $\alpha_1,\alpha_2,\ldots,\alpha_q$ are called the \emph{terms} of the splitting.
A basic, but important, property of RRT is given by the following lemma.

\begin{lemm}[Bestvina-Handel {\cite[Lemma 5.8]{BesHan92}}]
\label{lem:k-legal}
	Let $f:G\rightarrow G$ be an RTT. 
	If $H_k$ is an exponential stratum, and if $\alpha$ is a $k$-legal path, then the decomposition of $\alpha$ as maximal subpaths in $H_k$ or in $G_{k-1}$ is a splitting:
	\begin{equation*}
		\alpha=\alpha_1\cdot\beta_1\cdot\alpha_2\cdot\ldots \cdot\alpha_{q-1}\cdot\beta_{q-1}\cdot\alpha_q,
	\end{equation*}
	where the $\alpha_i$ are paths in $H_k$, the $\beta_i$ are paths in $G_{k-1}$, all non trivial (except possibly $\alpha_1$ and $\alpha_q$).
\end{lemm}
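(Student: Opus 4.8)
The plan is to show that the maximal alternating decomposition $\alpha = \alpha_1\cdot\beta_1\cdot\alpha_2\cdots\alpha_{q-1}\cdot\beta_{q-1}\cdot\alpha_q$ is a splitting, i.e. that for every $p\geq0$ one has $f^p_\#(\alpha)=f^p_\#(\alpha_1)f^p_\#(\beta_1)\cdots f^p_\#(\alpha_q)$, the right-hand side being an already reduced concatenation of edge paths. The starting point is a purely combinatorial observation about the stratum $H_k$: by (RTT-i) the first-edge map $Df$ sends each edge of height $k$ to an edge of height $k$, and since $G_{k-1}$ is $f$-invariant it sends each edge of $G_{k-1}$ to an edge of $G_{k-1}$. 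Hence, for a \emph{mixed} turn $(e_1,e_2)$ — one edge of height $k$, one of height $<k$ — the pair $((Df)^p(e_1),(Df)^p(e_2))$ always consists of one edge of each type, so it is never degenerate: every mixed turn at $H_k$ is legal. Since $\alpha$ is $k$-legal, each $\alpha_i$ is itself a $k$-legal path whose edges all have height $k$; by maximality each $\beta_i$ is a non-trivial reduced path in $G_{k-1}$, and $f_\#$ does not trivialise it, $f$ being a homotopy equivalence; and every turn that $\alpha$ makes at a breakpoint of the decomposition is mixed, hence legal.

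Next I run an induction on $p$, with inductive hypothesis: $f^p_\#(\alpha) = f^p_\#(\alpha_1)f^p_\#(\beta_1)\cdots f^p_\#(\alpha_q)$ without cancellation, each $f^p_\#(\alpha_i)$ is a $k$-legal path whose first and last edges have height $k$, and each $f^p_\#(\beta_i)$ is a non-trivial path in $G_{k-1}$. The case $p=0$ is the data assembled above. For the step, note first that $f^{p+1}_\#(\alpha_i)=f_\#(f^p_\#(\alpha_i))$ is $k$-legal by (RTT-iii) and that $f^{p+1}_\#(\beta_i)$ is a non-trivial path in $G_{k-1}$ by $f$-invariance of $G_{k-1}$. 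The crucial point is that $f^{p+1}_\#(\alpha_i)$ still begins and ends with an edge of height $k$: writing $f^p_\#(\alpha_i)=c_1c_2\cdots c_N$ as a reduced edge path, the path $f(f^p_\#(\alpha_i))=f(c_1)\cdots f(c_N)$ can cancel only at a junction $f(c_j)\mid f(c_{j+1})$ whose turn $(c_j^{-1},c_{j+1})$ has degenerate $Df$-image; but the turn $(c_1^{-1},c_2)$ is legal, being either between two height-$k$ edges (by $k$-legality of $f^p_\#(\alpha_i)$) or mixed (by the first paragraph), and when $c_2$ has height $<k$ the whole path $f(c_2)$ lies in $G_{k-1}$ while $f(c_1)$ ends with a height-$k$ edge, so in any case $f(c_1)$ survives the tightening intact. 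Hence the first edge of $f^{p+1}_\#(\alpha_i)$ is $Df(c_1)$, of height $k$, and symmetrically for the last edge.

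With this in hand, assemble the concatenation $f^{p+1}_\#(\alpha_1)f^{p+1}_\#(\beta_1)\cdots f^{p+1}_\#(\alpha_q)$: at every breakpoint one side ends (resp. begins) with an edge of height $k$ and the other begins (resp. ends) with an edge of $G_{k-1}$, hence of height $<k$, so no two adjacent edges are mutually inverse and the concatenation is already reduced. But this concatenation is exactly what one obtains from $f(f^p_\#(\alpha)) = f(f^p_\#(\alpha_1))f(f^p_\#(\beta_1))\cdots f(f^p_\#(\alpha_q))$ by tightening each factor to $f^{p+1}_\#(\alpha_1),f^{p+1}_\#(\beta_1),\dots$; being reduced, it therefore equals $[f(f^p_\#(\alpha))]=f^{p+1}_\#(\alpha)$, and the inductive hypothesis is re-established at $p+1$. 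This proves the decomposition is a splitting.

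I expect the main obstacle to be the cancellation bookkeeping in the inductive step — precisely the assertion that, after applying $f$ and tightening, the extreme height-$k$ edges of each $H_k$-segment $f^p_\#(\alpha_i)$ are preserved. This is where one genuinely uses both (RTT-i) — through the fact that height-$<k$ edges map into $G_{k-1}$, so a height-$k$ edge abutting a $G_{k-1}$-part can never be cancelled — and (RTT-iii), which keeps the intermediate paths $k$-legal and thereby confines all illegal turns to the $G_{k-1}$-interludes, away from the segment boundaries. Everything else reduces to the principle that cancellation happens only at illegal turns, while all breakpoint turns occurring here are legal by virtue of being mixed.
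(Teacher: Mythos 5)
The paper does not prove this lemma; it cites it directly from Bestvina--Handel \cite[Lemma 5.8]{BesHan92}, so there is no internal proof to compare against. Your argument is a correct reconstruction of the standard one: you track $H_k$-edges through tightening, using (RTT-i) to make mixed turns legal, (RTT-iii) to preserve $k$-legality, and the resulting observation that cancellation stays confined to $G_{k-1}$-segments and never crosses an $H_k$-boundary. The overall structure and the inductive bookkeeping are sound.

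One correction is worth making. Twice you justify the non-triviality of $f^p_\#(\beta_i)$ by appealing to ``$f$ being a homotopy equivalence'' and later by ``$f$-invariance of $G_{k-1}$''. Neither is sufficient: a topological representative that is a homotopy equivalence can perfectly well tighten a non-trivial reduced path to a point (this is exactly the folding phenomenon in the Bestvina--Handel algorithm), and $f$-invariance of $G_{k-1}$ only controls where the image lives, not whether it collapses. The correct tool is (RTT-ii), whose purpose is precisely to forbid this collapse. To apply it at each stage you must also check that the endpoints of $f^p_\#(\beta_i)$ stay in $H_k\cap G_{k-1}$; this follows since $\beta_i$ abuts $H_k$-edges on both sides, $Df$ maps $H_k$-edges into $H_k$ by (RTT-i), and $G_{k-1}$ is $f$-invariant, so $f$ maps $H_k\cap G_{k-1}$ into itself. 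With this substitution your inductive hypothesis closes cleanly.
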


\subsection{Growth of automorphisms of free groups}

\paragraph{}
The growth of an outer automorphism $\Phi\in\out{\free{r}}$ can be detected on an RTT representative. 
For a detailed discussion about the growth of a conjugacy class under iteration of an outer automorphism, one can read G.~Levitt's paper \cite{Lev09}.
For our purpose, we just need the following observations.

\begin{remark}\label{rem:pol/exp}
	Let $\Phi \in \out{\free r}$.
	\begin{enumerate}
		  \item \label{item:pol or exp} 
		  $\Phi$ has either polynomial growth, or exponential growth. 
		  \item \label{item: >1 exp}  
		  Moreover, $\Phi$ has exponential growth if and only if one (hence any) RRT $f:G\rightarrow G$ representing $\Phi$ has at least one exponential stratum.
	\end{enumerate}
\end{remark}

\section{Reductions of \autoref{theo:main}}
\label{sec:reductions}

\paragraph{}In this section we explain how to reduce our main theorem to an easier statement.
First, note that given an outer automorphism $\Phi$ of the free group, $\Phi$ has exponential (\resp polynomial) growth if and only if for every $p \in \N^*$, so has $\Phi^p$.
In particular, to prove \autoref{theo:main},  $\Phi$ can be replaced  by some positive power  of $\Phi$.
It will be advantageous to do so, since it allows us to use relative train-track maps with better properties (see \autoref{thm:bestvina-handel}).

\subsection{Polynomially growing automorphisms}
\label{sec:pol. growing}

\paragraph{}
Arguing by induction on the rank $r$ of $\free{r}$, the first author handled the case of polynomially growing automorphisms.

\begin{prop}[Coulon {\cite[Theorem 1.6]{Cou10a}}]\label{prop:pol}
If $\Phi\in\out{\free{r}}$ is polynomially growing, then for all positive integers $n$, $\Phi$ induces an outer automorphism of finite order of $\burn{r}{n}$.
\end{prop}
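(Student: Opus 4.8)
**Proof strategy for Proposition~\ref{prop:pol} (the statement as printed is quoted from Coulon; here is how I would prove it).**

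The plan is to argue by induction on the rank $r$, using the structure theory of polynomially growing outer automorphisms. First I would reduce to a convenient representative: by \autoref{thm:bestvina-handel} and \autoref{rem:pol/exp}(\ref{item: >1 exp}), after replacing $\Phi$ by a positive power (which is harmless by \autoref{rem: confusion aut out} and the discussion in \autoref{sec:reductions}, since a finite power of finite order is still finite order — and conversely if $\Phi^p$ has finite order in $\out{\burn rn}$ then so does $\Phi$), we may assume $\Phi$ is represented by an RTT $f\colon G\to G$ with \emph{no exponential stratum}. Hence every stratum is either a zero stratum or a non-exponential stratum $H_k$ consisting of a single edge $e_k$ with $f(e_k)=e_k u_k$, where $u_k$ is a loop in $G_{k-1}$ based at the terminal endpoint of $e_k$. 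This is the classical "upper triangular / unipotent" picture of a polynomially growing automorphism.

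The inductive step then runs along the filtration $\emptyset=G_0\subset G_1\subset\cdots\subset G_m=G$. The subgraph $G_{m-1}$ is $f$-invariant and carries a free factor system; the restriction $f|_{G_{m-1}}$ represents a polynomially growing outer automorphism of a free group of strictly smaller rank (or a product of such, on each connected component), so by the induction hypothesis it induces a finite-order outer automorphism of the corresponding Burnside group. The top stratum $H_m$ adds one edge $e=e_m$ with $f(e)=eu$, $u$ a loop in $G_{m-1}$. The key computation is to iterate: $f^k_\#(e)=e\cdot u\cdot f_\#(u)\cdot f^2_\#(u)\cdots f^{k-1}_\#(u)$, and since $u$ represents an element of $\pi_1(G_{m-1})$ on which $\Phi$ already has finite order mod $\free r^n$, the products $u\, f_\#(u)\cdots f^{k-1}_\#(u)$ become eventually periodic in $\burn rn$ (a telescoping cocycle argument: write $w_k = u\,\phi(u)\cdots\phi^{k-1}(u)$ for a lift $\phi$ of $\Phi$ to the smaller factor; then $w_{k+1}=u\,\phi(w_k)$, and since $\phi$ has finite order $d$ on the Burnside quotient, the sequence $(w_k \bmod \free r^n)$ satisfies a linear recursion over a finite set and is ultimately periodic). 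Feeding this back shows $f^k_\#(e)$, hence $\Phi^k$ applied to a generating set of $\free r$, is eventually periodic modulo $\free r^n$; so $\Phi$ has finite order in $\out{\burn rn}$.

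The main obstacle is making the inductive hypothesis applicable cleanly: $G_{m-1}$ need not be connected, and even when it is, the ambient free group $\pi_1(G)$ is an HNN extension or amalgam over $\pi_1(G_{m-1})$ rather than $\pi_1(G_{m-1})$ itself, so one must track how $\Phi$ acts relative to the free factor system and verify that the twisting loop $u$ and the iterates $f^k_\#(u)$ genuinely live in the smaller group where the hypothesis applies. This is exactly the bookkeeping that the base exponent $n$ is insensitive to, but it requires care to phrase the induction over free factor systems (rather than naively over rank) and to check that "finite order mod $\free r^n$" passes between $\pi_1(G_{m-1})$ and $\pi_1(G)$ compatibly with the splitting. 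Once that is set up, the cocycle/eventual-periodicity argument for the single added edge is routine, and one assembles the strata one at a time. (For the full details one may consult \cite[Theorem~1.6]{Cou10a}.)
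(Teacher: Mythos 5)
Your overall strategy---reduce to an upper triangular RTT with only non-exponential and zero strata, induct along the filtration, and handle each added edge $e$ with $f(e)=eu$ by a cocycle computation $w_k=u\,\phi(u)\cdots\phi^{k-1}(u)$---matches the approach the paper alludes to ("induction on the rank"), and your identification of the bookkeeping problems (disconnected $G_{m-1}$, working with free factor systems, passing between Burnside quotients of $\pi_1(G_{m-1})$ and $\pi_1(G)$, for which Lemma~\ref{res:injective burnside map} is the relevant tool) is accurate. However, the pivotal step is not correct as written. You assert that the sequence $(w_k \bmod \free r^n)$ "satisfies a linear recursion over a finite set and is ultimately periodic." But $\burn rn$ is, generically, an \emph{infinite} group (this is the Novikov--Adian theorem, and is precisely what makes the paper nontrivial), so the values $w_k$ range over an a priori infinite set; a recursion $w_{k+1}=u\,\phi(w_k)$ does not by itself force periodicity, any more than it would in the free group.

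What actually forces periodicity is the finite exponent of $\burn rn$, which your sketch never invokes. Let $d$ be such that $\phi^d$ acts trivially on the image of $\pi_1(G_{m-1})$ in $\burn rn$ (available as an honest automorphism, not just an outer one, by Remark~\ref{rem: confusion aut out}), and set $v=w_d$. Then, computing in $\burn rn$,
\begin{equation*}
w_{kd} \;=\; v\cdot\phi^d(v)\cdot\phi^{2d}(v)\cdots\phi^{(k-1)d}(v) \;=\; v^{k},
\end{equation*}
so $w_{nd}=v^{n}=1$ because $\burn rn$ has exponent $n$; hence $\phi^{nd}$ fixes the top edge and, combined with the inductive hypothesis on $G_{m-1}$, one concludes. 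This is also consistent with the quantitative bound $p(r,n)=n^{2(2^{r-1}-1)}$ stated in the remark after the proposition: one factor of $n$ (and a doubling of the exponent) accumulates per stratum. Without the $v^{n}=1$ step your induction does not close, so this gap needs to be filled before the proposal can be counted as a proof.
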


\begin{remark}
	The same proof actually gives a quantitative bound for the order of $\Phi$ in $\out{\burn rn}$.
	If $\Phi$ is an outer polynomially growing automorphism of $\free r$, then $\Phi^{p(r,n)}$ induces a trivial outer automorphism of $\burn rn$ where 
	\begin{equation*}
		p(r,n)  = n^{2(2^{r-1}-1)}.
	\end{equation*}
\end{remark}

\begin{example}
\label{exa: dehn-twist}
A particular case of polynomially growing automorphisms is given by the automorphisms of $\free 2$ induced by a Dehn-twist on a punctured torus.
For instance the automorphism $\phi$ defined by $\phi(a) = a$ and $\phi(b) = ba$.
Here $\phi^n$ is trivial in $\aut{\burn rn}$.
\end{example}

In view of \autoref{rem:pol/exp}~(\ref{item:pol or exp}) and \autoref{prop:pol}, we see that \autoref{theo:main} is a consequence of the following proposition.

\begin{prop}\label{res: reduction - poly}
If $\Phi\in\out{\free{r}}$ has exponential growth, then there exist $\kappa, n_0\in\N$ such that for all odd integers $n\geq n_0$,  $\Phi$ induces an outer automorphism of $\burn r{\kappa n}$ of infinite order.
\end{prop}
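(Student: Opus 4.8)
The plan is to follow the strategy sketched in the introduction for the example $\psi$, but carried out in the general setting of a relative train-track representative. First I would invoke \autoref{thm:bestvina-handel} to replace $\Phi$ by a suitable positive power (legitimate by the remarks in \autoref{sec:reductions}) so that it admits an RTT map $f : G \to G$ with a filtration $\emptyset = G_0 \subset G_1 \subset \dots \subset G_m = G$, all strata exponential, non-exponential or zero, every exponential stratum aperiodic, and images of vertices fixed. By \autoref{rem:pol/exp}~(\ref{item: >1 exp}), since $\Phi$ has exponential growth, $f$ has at least one exponential stratum; pick the \emph{top} exponential stratum $H_k$, so that above $H_k$ all strata are non-exponential or zero. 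The idea is to designate the edges of $G_k$ as "red" and the remaining edges (those of $G \setminus G_k$) as "yellow", and to track the "red part" $\Red{\cdot}$ of iterates $f^p_\#(\alpha)$ for a well-chosen path or circuit $\alpha$ of height $k$.

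Next I would set up the two quantitative inputs. The first is that the red part does not contain large powers: choosing $\alpha$ to be (essentially) an edge of $H_k$, the sequence $f^p_\#(\alpha)$ is $k$-legal by (RTT-iii), so by \autoref{lem:k-legal} it splits into maximal subpaths in $H_k$ and in $G_{k-1}$; since $H_k$ is aperiodic, the transition matrix $M_k$ is primitive, and the combinatorics of how edges of $H_k$ appear in $f^p_\#(\alpha)$ is governed by a primitive substitution $\sigma$ on the alphabet of $H_k$-edges with $a$ a prefix of $\sigma(a)$. Then \autoref{res: no big power in the orbit of the substitution} applies: either $\sigma^\infty(a)$ is not shift-periodic and no $\sigma^p(a)$ contains an $m$-th power for some fixed $m \geq 2$, or $\sigma^\infty(a) = u^\infty$ and $\sigma(u) = u^q$ — the latter degenerate case would force $f^p_\#(\alpha)$ to essentially be a power, contradicting that $H_k$ is a genuine exponential stratum (PF-eigenvalue $>1$), so it is excluded. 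This yields the promised integer $n_2$ such that $\Red{f^p_\#(\alpha)}$ never contains an $n_2$-th power. The second input is that only finitely many orbits of maximal yellow subwords occur among the $f^p_\#(\alpha)$: a maximal subpath of $f^p_\#(\alpha)$ lying in $G \setminus G_k$ is either already a subpath of $f(e)$ for some edge $e$ (finitely many) or is $f_\#$ of a maximal yellow subpath of $f^{p-1}_\#(\alpha)$; by induction there are finitely many $f$-orbits, hence finitely many conjugacy classes of elements of $\free r$ they represent, so there is an integer $n_3$ beyond which none becomes trivial in $\burn r{\kappa n}$.

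Then I would run the contradiction argument. Fix $\kappa$ and a bound $n_0 > \max\{n_1, 2n_2 + 2\xi, n_3\}$ with $n_1, \xi$ from \autoref{res: elementary moves}; let $n \geq n_0$ be odd. Suppose $\Phi$ induces an outer automorphism of \emph{finite} order of $\burn r{\kappa n}$; then there is $p \in \N^*$ with $f^p_\#(\alpha)$ and $\alpha$ representing conjugate elements, hence the same element of $\burn r{\kappa n}$ (after conjugating, or working with the circuit $[\alpha]$). By \autoref{res: elementary moves}, a sequence of $(n,\xi)$-elementary moves takes $f^p_\#(\alpha)$ to a word that a sequence of such moves also reaches from $\alpha$; but since the red part contains no $n_2$-th power and $n > 2n_2 + 2\xi$, every power $u^{m}$ involved in a move has $m > n/2 - \xi > n_2$, so $u$ is yellow, and the move only rewrites yellow subpaths — and since $n > n_3$ no yellow subpath can be entirely collapsed. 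Hence the moves preserve the red part, forcing $\Red{f^p_\#(\alpha)} = \Red{\alpha}$, i.e. $f^p$ fixes (up to the allowed moves) the $H_k$-combinatorics of $\alpha$; but $H_k$ exponential means $f^p_\#(\alpha)$ crosses $H_k$ strictly more times than $\alpha$ does — contradiction. Therefore $\Phi$ has infinite order in $\out{\burn r{\kappa n}}$, which is \autoref{res: reduction - poly}.

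The main obstacle I anticipate is the one flagged by the authors: the red and yellow subpaths need not represent elements of $\free r$ (they are paths in $G$, not circuits or loops based at a fixed vertex), so the "red word" has no a priori meaning in $\burn r{\kappa n}$, and an elementary move near a yellow/red junction can collapse a yellow piece and merge red edges across it (as in \autoref{fig:collapsing-intro}). Handling this is exactly why one passes to a finite-index subgroup of $\free r$ — equivalently a finite cover of $G$ — chosen so that the relevant yellow subpaths lift to loops (hence define group elements with a well-defined image in a Burnside group of exponent $\kappa n$ for suitable $\kappa$), which simultaneously guarantees, via the $n_3$ bound, that no yellow subpath is killed by a move; this is the content the authors defer to \autoref{sec: finite index subgroup} and \autoref{sec:no big powers}. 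Making the passage to the cover compatible with the RTT structure, the substitution argument, and the statement about $\kappa n$ rather than $n$ is the technically delicate heart of the proof.
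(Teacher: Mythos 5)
Your high-level plan matches the paper's: track a ``red'' primitive substitution on the top exponential stratum, show the red orbit has no large powers, show elementary moves only touch yellow subpaths, show yellow subpaths cannot be collapsed, and conclude that the red PF-length is preserved, a contradiction. But several of the steps you treat as quick remarks are where the real content lies, and two of them are actually incorrect as stated.

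First, your dismissal of the shift-periodic branch of \autoref{res: no big power in the orbit of the substitution} is wrong. You claim the case $\sigma^\infty(e_\bullet)=u^\infty$ ``would force $f^p_\#(\alpha)$ to essentially be a power, contradicting that $H_k$ is a genuine exponential stratum''; but \autoref{rem: primitive shift periodic substitution} exhibits a primitive substitution (PF-eigenvalue $>1$) whose fixed word is shift-periodic, so aperiodicity alone excludes nothing. Ruling this case out is the content of \autoref{res: sigma(e) not shift periodic}, a substantial argument that builds a graph $\Gamma$, proves $\rho$ is an immersion, lifts $f$, and reaches a contradiction in the abelianization using that $\phi$ is an automorphism (the off-diagonal block structure with $q\geq 2$ in the last row cannot lie in $GL_d(\Z)$). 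Second, your claim that ``finitely many $f$-orbits of maximal yellow subpaths, hence finitely many conjugacy classes, hence a single bound $n_3$'' is false: each orbit is infinite, so the set of conjugacy classes is infinite, and there is no uniform $n_3$ this way. The paper's mechanism is entirely different: one passes to a finite-index normal subgroup $\mathbf L$ (this is where $\kappa = [\free r : \mathbf L]$ and the exponent $\kappa n$ come from — in your write-up $\kappa$ appears with no definition) chosen via residual finiteness so that no path in $\mathcal P \cup f_\#(\mathcal P)$ lifts to a loop in the cover; then \autoref{res: lifting a train track in a train track} and \autoref{res: no loop in the lifted RTT} guarantee that no maximal yellow subpath of any $\hat f^p_\#(\hat e)$ is a loop, and in the final induction (H1) a collapsed yellow subpath would have to project to a loop in $G$, which is impossible for \emph{every} odd $n$, with no $n_3$ needed. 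Finally, the decomposition you set up is internally inconsistent and in the wrong direction: you declare yellow to be $G\setminus G_k$, but then reason about maximal $G_{k-1}$-subpaths; and you fix the \emph{top} exponential stratum, whereas the paper first restricts (via \autoref{res:injective burnside map}) to the free factor carried by the connected component of $G_k$ containing the \emph{lowest} exponential stratum, precisely so that everything ``yellow'' (i.e.\ in $G_{k-1}$) is non-exponential or zero. If you take the top exponential stratum, the yellow part below may contain further exponential strata, and then neither the primitive-substitution bookkeeping nor any finiteness of yellow pieces has a chance of working.
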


\subsection{Passing to a finite index subgroup}
\label{sec: finite index subgroup}

\paragraph{}
Let $\Phi$ be an exponentially growing outer automorphism of $\free r$.
By replacing if necessary $\Phi$ by a power of $\Phi$ we can assume that $\Phi$ is represented by an RTT $f:G\rightarrow G$ with a filtration $\emptyset=G_0 \subset G_1 \subset \dots \subset G_m=G$, satisfying the properties of \autoref{thm:bestvina-handel}.
We denote by $H_k$ the stratum of height $k$.
Let $e$ be an edge of an exponential stratum $H_k$.
According to \autoref{lem:k-legal}, $f(e)$ can be split as follows
\begin{equation*}
	f(e) = \alpha_1 \cdot \beta_1 \cdot \alpha_2 \cdot \dots \cdot \alpha_{q-1} \cdot\beta_{q-1} \cdot \alpha_q,
\end{equation*}
where the $\alpha_i$'s are non trivial paths contained in $H_k$ and the $\beta_i$'s non trivial paths contained in $G_{k-1}$.
We denote by $\mathcal P_e$ the set $\{\beta_1, \dots, \beta_{q-1}\}$.
Let $\mathcal P$ be the union of $\mathcal P_e$ for all edges $e$ belonging to an exponential stratum.
Note that $\mathcal P$ is finite.

\paragraph{}
Let $p \in \N$ and $e$ be an edge of the exponential stratum $H_k$.
Recall that $G_{k-1}$ is $f$-invariant.
Thus if $\beta$ is a maximal subpath of $f^p_\#(e)$ contained in $G_{k-1}$ then it is the image by some (possibly trivial) power of $f_\#$ of a path in $\mathcal P$.
Moreover we assumed that the image by $f$ of any vertex of $G$ is fixed by $f$.
Hence if in addition $\beta$ is a loop, there exists a path $\beta'$ in $\mathcal P \cup f_\#(\mathcal P)$ which is a loop such that $\beta$ is the image of $\beta'$ by a (possibly trivial) power of $f_\#$.
Since $\free r$ is residually finite, there exists a finite-index normal subgroup $\mathbf H$ of $\free r$ with the following property.
For every path $\beta \in \mathcal P \cup f_\#(\mathcal P)$, if $\beta$ is a loop then the conjugacy class of $\free r$ that it represents does not intersect $\mathbf H$.

\paragraph{}
Recall that $\tilde G$ stands for the universal cover of $G$.
Let us fix a base point $x_0$ in $G$.
The fundamental group $\free r = \pi_1(G,x_0)$ can therefore be identified with the deck transformation group acting on the left on $\tilde G$.
We fix a lift $\tilde f : \tilde G \rightarrow \tilde G$ of $f$.
It determines an automorphism $\phi$ in the outer class of $\Phi$ such that for every $g \in \free r$,
\begin{equation}
\label{eqn: passing finite index - lift auto}
	\phi(g) \circ \tilde f = \tilde f \circ g.
\end{equation}
There are only finitely many subgroups of $\free r$ of a given index.
Thus there exists an integer $q$ such that $\phi^q(\mathbf H) = \mathbf H$.
Consequently the intersection $\mathbf L = \bigcap_{p \in \Z} \phi^p(\mathbf H)$ is also a normal finite-index subgroup of $\free r$.
Moreover it is $\phi$-invariant.
As a consequence of the LERF property, it implies that the restriction of $\phi$ to $\mathbf L$ is an automorphism.
See for instance Lemma~6.0.6 in \cite{BesFeiHan00}, attributed there to Scott.

\paragraph{}
We now denote by $\kappa$ the index of $\mathbf L$ in $\free r$. 
Let $\hat G$ be the space $\hat G = \mathbf L \backslash \tilde G$ and $\rho \colon \hat G \rightarrow G$ be the natural projection induced by $\tilde G \rightarrow G$.
The group $\free r$ still acts on the left on $\hat G$ and $\mathbf L$ is the kernel of this action.
The map $\tilde f$ induces a map $\hat f \colon \hat G \rightarrow \hat G$ such that $\rho \circ \hat f = f \circ \rho$. Moreover, according to (\ref{eqn: passing finite index - lift auto}), for every $g \in \free r$, 
\begin{equation}
\label{eqn: finite cover - lift auto}
	\phi(g) \circ \hat f = \hat f \circ g.
\end{equation}

\begin{lemm}
\label{res: lifting a train track in a train track}
	The map $\hat f : \hat G \rightarrow \hat G$ admits a filtration which makes $\hat f$ be an RRT representing the outer class of $\phi$ restricted to $\mathbf L$.
	Moreover, for every exponential stratum $\hat H$ of $\hat G$ there exists $k \in \intvald 1m$ such that
	\begin{enumerate}
		\item $H_k$ is an exponential stratum of $G$,
		\item $\hat H$ is contained in $\rho^{-1}(H_k)$ and
		\item $\hat f$ sends $\hat H$ in $\hat H \cup \rho^{-1}(G_{k-1})$.
	\end{enumerate}
\end{lemm}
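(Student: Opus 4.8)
The plan is to lift the whole train-track structure along the finite covering $\rho : \hat G \to G$ and check that every defining property of an RTT is preserved, using crucially the $\phi$-invariance of $\mathbf L$ (so that $\hat f$ is well-defined on $\hat G$) and the residual-finiteness choice of $\mathbf H$ (so that no relevant loop survives in $\mathbf L$). First I would set up the cover: since $\mathbf L$ is normal, $\rho$ is a regular $\kappa$-sheeted covering, $\hat G$ is again a finite connected metric graph with no valence-one vertex, and $\pi_1(\hat G) \cong \mathbf L$ (up to inner automorphism and choice of basepoint lift). The map $\hat f$ exists and is a homotopy equivalence because $\tilde f$ descends through $\mathbf L \backslash \tilde G$ exactly when $\tilde f$ normalizes $\mathbf L$ inside the group of lifts, which is precisely the content of $\phi(\mathbf L) = \mathbf L$; equation (\ref{eqn: finite cover - lift auto}) then identifies the outer class of $\hat f$ with $\phi|_{\mathbf L}$. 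That $\hat f$ sends vertices to vertices and edges to edge-paths, and is locally injective on open edges, is immediate from $\rho \circ \hat f = f \circ \rho$ and the corresponding properties of $f$.

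Next I would define the filtration on $\hat G$ by pulling back: set $\hat G_j = \rho^{-1}(G_j)$. Each $\hat G_j$ is $\hat f$-invariant because $G_j$ is $f$-invariant and $\rho \circ \hat f = f \circ \rho$, so $\emptyset = \hat G_0 \subset \hat G_1 \subset \cdots \subset \hat G_m = \hat G$ is a filtration. The stratum $\widehat{H_k} = \overline{\hat G_k \setminus \hat G_{k-1}}$ is then $\rho^{-1}(H_k)$ together with the incident vertices, i.e.\ a disjoint union of lifts of the edges of $H_k$. Its transition matrix is a "blow-up" of $M_k$: each edge of $H_k$ contributes $\kappa$ (or fewer, if stabilized) edges upstairs, and the $(i,j)$-entry upstairs counts occurrences of a given lift $e_i'$ (or its inverse) in $\hat f(e_j')$, which projects to an occurrence of $e_i$ in $f(e_j)$. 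A standard argument shows this pulled-back stratum need not be irreducible, so I would then \emph{refine}: decompose each $\hat G_k \setminus \hat G_{k-1}$ into the irreducible pieces dictated by the induced order on $\hat f$-invariant subgraphs, inserting the corresponding intermediate subgraphs into the filtration. Since $M_k$ with $\lambda_k > 1$ pulls back to a non-negative integer matrix whose irreducible components again have PF-eigenvalue $\lambda_k > 1$ (the covering is finite and $\hat f$-equivariant, so growth rates are unchanged), every exponential stratum $\hat H$ obtained this way sits inside a single $\rho^{-1}(H_k)$ with $H_k$ exponential — giving properties (1) and (2). Property (3) follows because $\hat f(\hat H) \subseteq \hat f(\rho^{-1}(H_k)) \subseteq \rho^{-1}(f(H_k)) \subseteq \rho^{-1}(H_k \cup G_{k-1}) = \rho^{-1}(H_k) \cup \hat G_{k-1}$, and an edge of $\hat H$ maps into the $\hat H$-part of $\rho^{-1}(H_k)$ since $Df$ preserves the edges of height $k$ and turns between height-$k$ and lower edges are legal, so the lift of this "legal return" structure keeps the image inside $\hat H \cup \hat G_{k-1}$.

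Then I would verify the three RTT axioms for each exponential $\hat H$ over $H_k$. Axiom (RTT-\ref{enu: RRT - Df}): $D\hat f$ preserves the edges of height (that of) $\hat H$ because it projects to $Df$, which preserves the edges of $H_k$; and a turn at a lift-vertex between a height-$\hat H$ edge and a lower edge projects to a legal (hence non-degenerate, indefinitely) turn downstairs, and legality is detected on $(Df)^p$-images which lift, so it is legal upstairs. The path axiom and the $k$-legal-image axiom are likewise purely local/combinatorial statements about $(Df)^p$ and $f_\#$ on the edges of $H_k$, and since $\rho$ is a local isometry and $\hat f_\# $ covers $f_\#$ (tightening commutes with lifting along a covering), a $\hat k$-legal path downstairs-projects to a $k$-legal path and its $\hat f_\#$-image lifts the $k$-legal path $f_\#(\alpha)$; the one subtlety — that $\hat f_\#(\hat\alpha)$ could a priori backtrack where $f_\#(\alpha)$ does not — cannot happen because backtracking is a local phenomenon and $\rho$ is locally injective. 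The middle axiom (that $f_\#$ of a non-trivial path with endpoints in $H_k \cap G_{k-1}$ stays non-trivial with endpoints there) transfers the same way. This is where the residual-finiteness hypothesis re-enters implicitly: one needs that no $\beta \in \mathcal P$ that happens to be a loop becomes trivial in $\mathbf L$ — but by construction of $\mathbf H \supseteq \mathbf L$ such loops do not meet $\mathbf L$, so in $\hat G$ they lift to genuine non-closed paths, which is exactly what prevents an exponential stratum from degenerating or a $\beta_i$-term of a splitting from collapsing under iteration. Assembling these checks, with the refinement, $\hat f$ is an RTT representing $\phi|_{\mathbf L}$ with the stated properties.

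The main obstacle I anticipate is the refinement step: the naive pullback filtration is genuinely not an RTT filtration because an irreducible exponential stratum can pull back to a \emph{reducible} stratum (distinct lifts of edges of $H_k$ need not all map across each other), so one must carefully split it into $\hat f$-invariant irreducible sub-strata, order them, and confirm that each resulting exponential piece still satisfies (RTT-\ref{enu: RRT - Df})–(iii) and conclusions (1)–(3) — in particular that the "lower" part $\hat G_{k-1}$ plus possibly some of these new sub-strata plays the role of the relative lower filtration for the piece one is examining. The bookkeeping is routine in spirit (this is the standard fact that RTT structures lift to finite covers, essentially Lemma 6.0.6-type reasoning in \cite{BesFeiHan00}) but it is the step where all the care is needed.
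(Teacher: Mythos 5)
Your overall plan — pull back the filtration along the finite cover, observe the pulled-back exponential strata may be reducible, refine into irreducible pieces, and then transfer the RTT axioms via $\rho\circ\hat f=f\circ\rho$ — matches the paper's structure. But the crucial step is glossed over, and the justification you give for it is not correct.

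The gap is property (3): that $\hat f$ sends $\hat H$ into $\hat H\cup\rho^{-1}(G_{k-1})$, and not merely into $\rho^{-1}(H_k)\cup\rho^{-1}(G_{k-1})$. Your argument establishes only the latter containment: the fact that $Df$ preserves the edges of height $k$ tells you that $D\hat f$ of an edge of $\hat H$ is a lift of a height-$k$ edge, hence lies in $\rho^{-1}(H_k)$, but nothing so far forces it to lie in the \emph{same} irreducible block $\hat H$ rather than a different one. In graph-theoretic terms: on the set of lifts of height-$k$ edges, the relation ``$\hat e_1$ appears in some $\hat f^p(\hat e_2)$'' is automatically reflexive and transitive, but its symmetry — equivalently, the statement that the transition matrix of $\rho^{-1}(H_k)$ is block-\emph{diagonal} rather than merely block-triangular — is a genuine claim requiring proof. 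Without symmetry, some irreducible block could feed strictly downward into another block of $\rho^{-1}(H_k)$ which is not yet in $\hat G_{k-1}$, and conclusion (3) fails. Your appeal to ``legal return structure'' does not touch this; legality constrains which turns are taken, not which lift of an edge is visited. Likewise your claim that each irreducible block has PF-eigenvalue $\lambda_k>1$ secretly depends on the same block-diagonality (the lifted positive Perron eigenvector argument only applies to a direct-sum decomposition), so it cannot be invoked to establish (3) without circularity.

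The paper's proof of symmetry is the technical heart of the lemma and is not a routine lifting argument. Given $\hat e_1\sim\hat e_2$ (say $\hat e_1$ appears in $\hat f^p(\hat e_2)$), it uses aperiodicity of $H_k$ downstairs to find $q$ with $e_2$ appearing in $f^q(e_1)$; lifting gives some $u\in\free r$ with $u\cdot\hat e_2$ appearing in $\hat f^q(\hat e_1)$; then, iterating via \eqref{eqn: finite cover - lift auto}, one shows $u_\ell\cdot\hat e_2$ appears in $\hat f^{\ell(p+q)+q}(\hat e_1)$ for $u_\ell=\phi^{\ell(p+q)}(u)\cdots\phi^{p+q}(u)u$; finally the finite index and $\phi$-invariance of $\mathbf L$ (pigeonhole on $\mathbf L$-cosets plus the telescoping identity $u_{\ell+t}u_\ell^{-1}=\phi^{(\ell+1)(p+q)}(u_{t-1})$) produce a $t$ with $u_{t-1}\in\mathbf L$, hence $u_{t-1}\cdot\hat e_2=\hat e_2$, hence $\hat e_2\sim\hat e_1$. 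This argument — using aperiodicity, finite index, and $\phi$-invariance all at once — is what your proposal is missing, and it cannot be replaced by the general-position remark that ``RTT structures lift to finite covers.''

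Two smaller points. The parenthetical ``(or fewer, if stabilized)'' is off: since $\mathbf L$ is normal and $\free r$ acts freely on $\tilde G$, the deck group $\free r/\mathbf L$ acts freely on $\hat G$, so every edge of $G$ has exactly $\kappa$ lifts. And the residual-finiteness choice of $\mathbf H$ plays no role in this lemma — it is used only in the subsequent \autoref{res: no loop in the lifted RTT}; the present lemma uses only that $\mathbf L$ is normal, of finite index, and $\phi$-invariant.
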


\begin{proof}
We observe that, by construction, $\hat f : \hat G \rightarrow \hat G$ is a topological representative of $\phi$ restricted to $\mathbf L$, and $\emptyset=\rho^{-1}(G_0) \subset \rho^{-1}(G_1) \subset \dots \subset \rho^{-1}(G_m)=\hat G$ is an invariant filtration for $\hat f$.
We are going to define a finer filtration $(\hat G_{k,j})$ where the pairs $(k,j)$ are endowed with the lexicographical order such that for every $k \in \intvald 1m$ we have 
\begin{equation*}
	\rho^{-1}(G_{k-1}) \subset \hat G_{k,1} \subset \hat G_{k,2} \subset \ldots \subset \hat G_{k,s} = \rho^{-1}(G_k).
\end{equation*}
Let $k \in \intvald 0m$.
We focus on the stratum $H_k$ of height $k$ of $G$.
We distinguish three cases.
\begin{enumerate}
	\item If $H_k$ is a zero stratum, we just put $\hat G_{k,1}=\rho^{-1}(G_k)$.
	Since $\rho \circ \hat f = f \circ \rho$, we have $\hat f(\hat G_{k,1})\subseteq\rho^{-1}(G_{k-1})$.
	Therefore the associated stratum is a zero stratum.
	
	\item If $H_k$ is a non-exponential stratum, it consists of a single edge $e$ with $f(e)=eu$ where $u$ is a loop in $G_{k-1}$. 
	Then $\rho^{-1}(e)$ is a collection of $\kappa$ edges: $\hat e_1,\dots,\hat e_\kappa$ (recall that $\kappa$ is the index of $\mathbf L$ in $\free{r}$). 
	For every $j \in \intvald 1\kappa$, we put $\hat G_{k,j} = \{\hat e_1,\dots,\hat e_j\} \cup \rho^{-1}(G_{k-1})$.
	Since $\rho \circ \hat f = f \circ \rho$, we get  that $\hat f(\hat e_j)=\hat e_j \hat u_j$ where $\hat u_j$ is a path in $\rho^{-1}(G_{k-1})$. 
	In particular, $\hat f$ let invariant the filtration 
	\begin{equation*}
		\rho^{-1}(G_{k-1}) \subset \hat G_{k,1} \subset \ldots \subset \hat G_{k,\kappa} = \rho^{-1}(G_k).
	\end{equation*}
	Moreover each stratum coming from this filtration consists of a single edge $\hat e_j$ and its transition matrix is the scalar matrix $1$.
	Thus it is a non-exponential stratum.
	
	\item Assume now that $H_k$ is an exponential stratum.
	We define a binary relation on $\rho^{-1}(H_k)$.
	Given two edges $\hat e_1$ and $\hat e_2$ we say that $\hat e_1 \sim \hat e_2$ if there exists $p \in \N$ such that $\hat e_1$ or $\hat e_1^{-1}$ is an edge of $\hat f^p(\hat e_2)$.
	This relation is reflexive and transitive.
	We claim that it is an equivalence relation.
	Let $\hat e_1$ and $\hat e_2$ be two edges of $\rho^{-1}(H_k)$ such that $\hat e_1 \sim \hat e_2$.
	We want to prove that $\hat e_2 \sim \hat e_1$. 
	By definition of our relation there exists $p \in \N$ such that $\hat e_1$ or $\hat e_1^{-1}$ is an edge of $\hat f^p(\hat e_2)$.
	For simplicity we assume that $\hat e_1$ belongs to $\hat f^p(\hat e_2)$.
	The other case works in the same way.
	We write $e_1 = \rho(\hat e_1)$ and $e_2 = \rho (\hat e_2)$ for their respective images in $G$.
	Since the stratum $H_k$ is aperiodic there exists $q \in \N$ such that $e_2$ or $e_2^{-1}$ is an edge of $f^q(e_1)$.
	For simplicity we assume that $e_2$ is an edge $f^q(e_1)$.
	Since $\rho \circ \hat f = f \circ \rho$ there exists a preimage of $e_2$ in $\hat G$ which is an edge of $\hat f^q(\hat e_1)$.
	Thus there exists $u \in \free r$ such that $u \cdot\hat e_2$ is an edge of $\hat f^q(\hat e_1)$.
	We now prove by induction that for every $\ell \in \N$, $u_\ell \cdot \hat e_2$ is an edge of $\hat f^{\ell(p+q) +q}(\hat e_1)$, where
	\begin{equation*}
		u_\ell  = \phi^{\ell(p+q)}(u) \cdots \phi^{p+q}(u)u.
	\end{equation*}
	If $\ell = 0$ the statement follows from the definition of $u$.
	Assume that it is true for $\ell \in \N$, i.e. $u_\ell\cdot\hat e_2$ is an edge of $\hat f^{\ell(p+q) +q}(\hat e_1)$.
	Using (\ref{eqn: finite cover - lift auto}) we get that $\phi^p(u_\ell)\cdot\hat f^p(\hat e_2) = \hat f^p(u_\ell\hat e_2)$ is a subpath of $\hat f^{(\ell+1)(p+q)}(\hat e_1)$.
	In particular $\phi^p(u_\ell)\cdot \hat e_1$ lies in $\hat f^{(\ell+1)(p+q)}(\hat e_1)$.
	With a similar argument we get that $\phi^{p+q}(u_\ell)u\cdot \hat e_2$ lies in $\hat f^{(\ell+1)(p+q)+q}(\hat e_1)$.
	However 
	\begin{equation*}
		 \phi^{p+q}(u_\ell)u = \phi^{(\ell+1)(p+q)}(u) \cdots \phi^{p+q}(u)u = u_{\ell+1}.
	\end{equation*}
	Thus the statement holds for $\ell+1$, which completes the proof of the induction.
	
	\paragraph{}
	Since $\mathbf L$ has finite index in $\free r$, there exist $\ell \in \N$ and $t\in \N^*$ such that 
	$u_\ell$ and $u_{\ell+t}$ are in the same $\mathbf L$-coset, 
	i.e. $u_{\ell+t} u_\ell^{-1} \in \mathbf L$.
	However
   	\begin{equation*}
		u_{\ell+t} u_\ell^{-1} = \phi^{(\ell+t)(p+q)}(u) \cdots \phi^{(\ell+1)(p+q)}(u)
		= \phi^{(\ell+1)(p+q)}(u_{t-1}).
	\end{equation*}
	Since $\mathbf L$ is $\phi$-invariant, we derive that $u_{t-1}$ belongs to $\mathbf L$.
	Recall that $\mathbf L$ is the kernel of the action of $\free r$ on $\hat G$, 
	hence $u_{t-1}\cdot \hat e_2 = \hat e_2$.
	On the other hand, $u_{t-1}\cdot \hat e_2$ is an edge of $\hat f^{(t-1)(p+q)+q}(\hat e_1)$.
	Consequently $\hat e_2 \sim \hat e_1$, which completes the proof of our claim.

	\paragraph{}
	We denote by $\hat H_{k,1}, \cdots, \hat H_{k,s}$ the equivalence classes for the relation~$\sim$.
	For every $j \in \intvald 1s$, we put $\hat G_{k,j} = \hat H_{k,1} \cup \dots \cup \hat H_{k,j} \cup \rho^{-1}(G_{k-1})$.
	By construction, the filtration 
	\begin{equation*}
		\rho^{-1}(G_{k-1}) \subset \hat G_{k,1} \subset \ldots \subset \hat G_{k,s}= \rho^{-1}(G_k)
	\end{equation*}
	is $\hat f$-invariant.
	Moreover the strata $\hat H_{k,1}, \dots, \hat H_{k,s}$ associated to this filtration are irreducible.
	We claim that they are exponential.
	Let $j \in \intvald 1s$. 
	Let $M_{k,j}$ be the transition matrix of $\hat H_{k,j}$.
	It is known that if the PF-eigenvalue of $M_{k,j}$ is $1$, then $M_{k,j}$ is a permutation matrix. 
	Thus there exist an edge $\hat e \in \hat H_{k,j}$ and a positive integer $p$ such that $\hat e$ is the only edge of $\rho^{-1}(H_k)$ in $\hat f^p(\hat e)$.
	In particular, if $e$ stands for $e = \rho (\hat e)$ we get that $e$ is the only edge of $H_k$ in $f^p(e)$.
	This contradicts the fact that $H$ is aperiodic.
	Hence the PF-eigenvalue of $M_{k,j}$ is larger than $1$ and the stratum $\hat H_{k,j}$ is exponential.
\end{enumerate}
Finally, recall that  $f$ satisfies properties (RTT-i), (RTT-ii) and (RTT-iii).
It follows from $\rho \circ \hat f = f \circ \rho$, that $\hat f$ also satisfies properties (RTT-i), (RTT-ii) and (RTT-iii).
\end{proof}

\begin{lemm}
\label{res: no loop in the lifted RTT}
	For every edge $\hat e$ in an exponential stratum $\hat H$ of $\hat G$, for every $p \in \N$, every maximal subpath of $\hat f^p_\#(\hat e)$ that does not cross $\hat H$ is not a loop.
\end{lemm}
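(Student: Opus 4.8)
The plan is to transfer the question back down to $G$ via the covering projection $\rho\colon\hat G\to G$ and then to play off the defining properties of the subgroups $\mathbf H$ and $\mathbf L$ constructed in \autoref{sec: finite index subgroup}. Fix an edge $\hat e$ of an exponential stratum $\hat H$ of $\hat G$ and let $k\in\intvald1m$ be the index provided by \autoref{res: lifting a train track in a train track}: thus $H_k$ is an exponential stratum of $G$, $\hat H\subseteq\rho^{-1}(H_k)$, and $\hat f$ maps $\hat H$ into $\hat H\cup\rho^{-1}(G_{k-1})$. Since $G_{k-1}$ is $f$-invariant and $\rho\circ\hat f=f\circ\rho$, the subgraph $\rho^{-1}(G_{k-1})$ is $\hat f$-invariant, so by iteration $\hat f^p_\#(\hat e)$ is, for every $p\in\N$, a path contained in the subgraph $\hat H\cup\rho^{-1}(G_{k-1})$. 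Consequently a subpath of $\hat f^p_\#(\hat e)$ fails to cross $\hat H$ precisely when all of its edges belong to $\rho^{-1}(G_{k-1})$; in particular the maximal subpaths of $\hat f^p_\#(\hat e)$ that do not cross $\hat H$ are exactly the (non-trivial) maximal subpaths of $\hat f^p_\#(\hat e)$ contained in $\rho^{-1}(G_{k-1})$.

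Next I would project through $\rho$. Being a covering map, $\rho$ commutes with tightening of paths, hence $\rho\left(\hat f^p_\#(\hat e)\right)=f^p_\#(e)$, where $e=\rho(\hat e)$ is an edge of the exponential stratum $H_k$ of $G$. Moreover $\rho$ sends edges of $\hat H$ to edges of $H_k$ and edges of $\rho^{-1}(G_{k-1})$ to edges of $G_{k-1}$, and the edge sets of $H_k$ and $G_{k-1}$ are disjoint; therefore the decomposition of $\hat f^p_\#(\hat e)$ into maximal subpaths lying in $\hat H$ or in $\rho^{-1}(G_{k-1})$ is carried by $\rho$ to the decomposition of $f^p_\#(e)$ into maximal subpaths lying in $H_k$ or in $G_{k-1}$ (no two consecutive terms can merge, for lack of a common edge). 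So if $\hat\gamma$ is a maximal subpath of $\hat f^p_\#(\hat e)$ that does not cross $\hat H$, then $\gamma:=\rho(\hat\gamma)$ is a maximal subpath of $f^p_\#(e)$ contained in $G_{k-1}$.

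Now I would argue by contradiction and suppose such a $\hat\gamma$ is a loop. Then $\gamma$ is a loop in $G$; and since $\hat G=\mathbf L\backslash\tilde G$ is the covering of $G$ associated to the normal subgroup $\mathbf L$, the conjugacy class of $\free r$ represented by $\gamma$ is contained in $\mathbf L$. On the other hand, $\gamma$ is a maximal subpath of $f^p_\#(e)$ in $G_{k-1}$ that happens to be a loop, so by the discussion preceding \autoref{res: lifting a train track in a train track} there exist a loop $\beta\in\mathcal P\cup f_\#(\mathcal P)$ and an integer $\ell\geq0$ with $\gamma=f^\ell_\#(\beta)$; hence the conjugacy class of $\gamma$ is the $\phi^\ell$-image of the conjugacy class of $\beta$ (inner automorphisms act trivially on conjugacy classes, so it is irrelevant that $\hat f$, and thus $f_\#$, was chosen to induce $\phi$ rather than some other automorphism in $\Phi$). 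By the choice of $\mathbf H$, the conjugacy class of $\beta$ does not meet $\mathbf H$, hence it does not meet $\mathbf L\subseteq\mathbf H$; and since $\mathbf L$ is normal and $\phi$-invariant, $\phi$ descends to an automorphism of $\free r/\mathbf L$, so the $\phi^\ell$-image of a conjugacy class avoiding $\mathbf L$ still avoids $\mathbf L$. This contradicts the preceding sentence, so $\hat\gamma$ cannot be a loop.

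The only delicate step is the first one, namely that $\hat f^p_\#(\hat e)$ uses no edge of $\rho^{-1}(H_k)$ lying outside $\hat H$; but this is exactly what part (3) of \autoref{res: lifting a train track in a train track} records — it is the whole purpose of the equivalence relation $\sim$ on $\rho^{-1}(H_k)$ — so the substantive work is already behind us. What remains is routine covering-space bookkeeping; the two things that require a modicum of care are that $\rho$ commutes with tightening and that, because $H_k$ and $G_{k-1}$ have disjoint edge sets, projecting the coarse decomposition of $\hat f^p_\#(\hat e)$ cannot fuse an $H_k$-term with a $G_{k-1}$-term.
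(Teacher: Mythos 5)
Your proof is correct and follows essentially the same route as the paper's: project the maximal subpath through $\rho$ to a maximal $G_{k-1}$-subpath $\beta$ of $f^p_\#(e)$, invoke the construction of $\mathcal P$ together with the $\phi$-invariance of $\mathbf L$ to see that the conjugacy class represented by $\beta$ avoids $\mathbf L$, and conclude that $\beta$ cannot lift to a loop in $\hat G$. You are somewhat more explicit than the paper about why all of $\hat f^p_\#(\hat e)$ lies in $\hat H\cup\rho^{-1}(G_{k-1})$ and why the two yellow-red decompositions correspond under $\rho$, but the substance is identical.
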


\begin{proof}
	Let $\hat e$ be an edge of an exponential stratum  $\hat H$ of $\hat G$.
	Let $p \in \N$.
	Let $\hat \beta$ be  a maximal subpath of $\hat f^p_\#(\hat e)$ that does not cross $\hat H$.
	By \autoref{res: lifting a train track in a train track}, there exists $k \in \intvald 1m$ such that $H_k$ is an exponential stratum of $G$, $\hat H$ is contained in $\rho^{-1}(H_k)$ and $\hat f(\hat H)$ is a subset of $\hat H \cup \rho^{-1}(G_{k-1})$.
	We denote by $e$ the image of $\hat e$ by $\rho$.
	It belongs to $H_k$.
	Since $\rho$ is a continuous locally injective map, $\hat f^p_\#(\hat e) $ is a lift of $f^p_\#(e)$.
	It follows that $\beta = \rho(\hat \beta)$ is a maximal subpath of $f^p_\#(e)$ contained in $G_{k-1}$.
	If $\beta$ is not a loop, neither is $\hat \beta$.
	Therefore we can assume that $\beta$ is a loop in $G$.
	By construction of $\mathcal P$, there exists a loop $\beta'$ in $\mathcal P \cup f_\#(\mathcal P)$ such that $\beta$ is the image of $\beta'$ by some power of $f_\#$.
	However, by definition, the conjugacy class of $\free r$ represented by $\beta'$ does not intersect $\mathbf L\subset \mathbf H$.
	Since $\mathbf L$ is $\phi$-invariant, neither does the conjugacy class of $\free r$ represented by $\beta$.
	Thus its lift $\hat \beta$ in $\hat G$ cannot be a loop.
\end{proof}

\begin{lemm}
\label{res: passing to a finite index subgroup}
	Let $n$ be an integer.
	Recall that $\kappa$ is the index of $\mathbf L$ in $\free r$.
	If $\Phi$ induces an outer automorphism of finite order of $\burn r{\kappa n}$ then its restriction to $\mathbf L$ induces an outer automorphism of finite order of $\mathbf L /\mathbf L^n$.
\end{lemm}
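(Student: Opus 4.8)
The statement to prove is Lemma 5.5 (\autoref{res: passing to a finite index subgroup}): if $\Phi$ induces a finite order outer automorphism of $\burn r{\kappa n}$, then the restriction of $\phi$ to $\mathbf L$ induces a finite order outer automorphism of $\mathbf L/\mathbf L^n$. The key point is that $\mathbf L$ has index $\kappa$ in $\free r$, so $\mathbf L^n$ sits inside $\free r^{\kappa n}$ in a controlled way; more precisely, since $\mathbf L$ is normal and has index $\kappa$, for any $g \in \mathbf L$ the element $g^{\kappa}$ is... no — rather, the relevant fact is that $\mathbf L \cap \free r^{\kappa n}$ compares with $\mathbf L^n$. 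The cleanest approach is to exhibit a natural homomorphism $\mathbf L/(\mathbf L \cap \free r^{\kappa n}) \to \mathbf L/\mathbf L^n$ or the other way, equivariant with respect to $\phi$, so that finite order upstairs forces finite order downstairs.

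**Main steps.** First I would record that $\mathbf L^n$ is $\phi$-invariant (as $\mathbf L$ is $\phi$-invariant and $\phi$ is an automorphism of $\mathbf L$, it preserves the subgroup generated by $n$-th powers), so $\phi$ genuinely induces an automorphism $\bar\phi$ of the Burnside-type quotient $\mathbf L/\mathbf L^n$; one must also note that $\mathbf L^n$ is characteristic in $\mathbf L$ hence normal in $\free r$, and the conjugation action of $\free r$ on $\mathbf L/\mathbf L^n$ together with $\bar\phi$ makes ``finite order in $\out{\cdot}$'' meaningful. Second, I would use the inclusion $\free r^{\kappa n} \cap \mathbf L \subseteq \mathbf L^n$: every element of $\mathbf L$ that is a product of $\kappa n$-th powers in $\free r$ is a product of $n$-th powers in $\mathbf L$ — this is where index $\kappa$ enters, because if $w \in \free r$ then $w^{\kappa} \in \mathbf L$ (the image of $w$ in the finite group $\free r/\mathbf L$ has order dividing $\kappa$), hence $w^{\kappa n} = (w^{\kappa})^n \in \mathbf L^n$; a general element of $\free r^{\kappa n}$ is a product of conjugates $v w^{\kappa n} v^{-1}$, and when the whole product lies in $\mathbf L$ one massages it, using normality of $\mathbf L$, into a product of $n$-th powers of elements of $\mathbf L$. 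This yields a surjection $\mathbf L/\mathbf L^n \twoheadrightarrow \mathbf L/(\free r^{\kappa n}\cap \mathbf L) \hookrightarrow \free r/\free r^{\kappa n} = \burn r{\kappa n}$, but more usefully it gives a $\phi$-equivariant surjection $\mathbf L/(\free r^{\kappa n}\cap \mathbf L) \twoheadleftarrow \mathbf L/\mathbf L^n$ — wait, the direction is: $\mathbf L^n \subseteq \free r^{\kappa n} \cap \mathbf L$, so there is a surjection $\mathbf L/\mathbf L^n \twoheadrightarrow \mathbf L/(\free r^{\kappa n}\cap\mathbf L)$. Third, note $\mathbf L/(\free r^{\kappa n}\cap\mathbf L)$ embeds as a finite-index subgroup of $\burn r{\kappa n}$, and is invariant under $\bar\Phi$ (the induced map of $\Phi$ on $\burn r{\kappa n}$), compatibly with $\bar\phi$.

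**Concluding the argument and the obstacle.** With these maps in place: if $\Phi$ has finite order in $\out{\burn r{\kappa n}}$, say $\Phi^N$ is inner on $\burn r{\kappa n}$, then $\phi^N$ acts on $\mathbf L/(\free r^{\kappa n}\cap\mathbf L)$ as conjugation by an element of $\burn r{\kappa n}$; I would then pass to a further power to land inside $\mathbf L/(\free r^{\kappa n}\cap\mathbf L)$ itself (this is the subtle point — an inner automorphism of the big group need not restrict to an inner automorphism of the subgroup, but since the subgroup has finite index $\kappa$ in the finite-or-infinite group $\burn r{\kappa n}$, a bounded power of the conjugating element normalizes it, and one uses that $\burn r{\kappa n}$ is torsion so the conjugating element has finite order, giving that a suitable power of $\phi$ is inner on $\mathbf L/(\free r^{\kappa n}\cap\mathbf L)$). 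Since $\mathbf L/\mathbf L^n \twoheadrightarrow \mathbf L/(\free r^{\kappa n}\cap\mathbf L)$ is $\phi$-equivariant and the kernel $(\free r^{\kappa n}\cap\mathbf L)/\mathbf L^n$ is... this is where I expect the real work: the kernel need not be finite, so ``finite order downstairs in the quotient'' does not immediately give ``finite order in $\mathbf L/\mathbf L^n$''. The honest fix is to observe that $\free r^{\kappa n}\cap\mathbf L = \mathbf L^n$ is in fact forced, or at least that the relevant quotient is handled directly: every $n$-th power in $\mathbf L$ is a $\kappa n$-th power divided by something, but cleanly, $\mathbf L/\mathbf L^n$ itself has exponent $n$ hence is torsion, so inner automorphisms of it have finite order, and what we really need is that $\phi$ restricted to $\mathbf L$ has finite order in $\out{\mathbf L/\mathbf L^n}$ — which will follow once we show a power of $\phi$ is inner on $\mathbf L/\mathbf L^n$. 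I expect the main obstacle to be exactly this transfer of innerness through the (possibly infinite-kernel) projection, and I would resolve it by working not with an abstract projection but with the concrete fact $\free r^{\kappa n} \cap \mathbf L = \mathbf L^n$: given $g \in \mathbf L$ writable as $\prod v_i w_i^{\kappa n} v_i^{-1}$ in $\free r$, rewrite each $v_i w_i^{\kappa n} v_i^{-1} = (v_i w_i^{\kappa} v_i^{-1})^{n}$ and note $v_i w_i^{\kappa} v_i^{-1}$ need not lie in $\mathbf L$ — so instead group the product over cosets of $\mathbf L$ and use that $\mathbf L$ is normal to collect, producing a genuine expression of $g$ as a product of $n$-th powers of elements of $\mathbf L$; this gives $\mathbf L/\mathbf L^n \cong \mathbf L/(\free r^{\kappa n}\cap\mathbf L)$, the kernel collapses, and the finite-order conclusion then transfers directly from $\burn r{\kappa n}$ to $\mathbf L/\mathbf L^n$ via the finite-index subgroup argument of the previous paragraph.
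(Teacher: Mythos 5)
Your core instinct — compare $\free r^{\kappa n}$ with $\mathbf L^n$ using the fact that $g^\kappa\in\mathbf L$ for every $g\in\free r$ — is exactly right, but you get the direction of the inclusion wrong, and that confusion propagates through the rest of the plan. You first correctly observe that each generator $vw^{\kappa n}v^{-1}=\left(vw^\kappa v^{-1}\right)^n$ lies in $\mathbf L^n$ (here $w^\kappa\in\mathbf L$ and normality of $\mathbf L$ give $vw^\kappa v^{-1}\in\mathbf L$), so in fact $\free r^{\kappa n}\subseteq\mathbf L^n$, and \emph{a fortiori} $\free r^{\kappa n}\cap\mathbf L\subseteq\mathbf L^n$. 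But then you write ``$\mathbf L^n \subseteq \free r^{\kappa n}\cap\mathbf L$, so there is a surjection $\mathbf L/\mathbf L^n\twoheadrightarrow\mathbf L/(\free r^{\kappa n}\cap\mathbf L)$''. That inclusion is false in general: an $n$-th power of an element of $\mathbf L$ has no reason to lie in the normal closure of $\kappa n$-th powers of $\free r$. (Already with $n=1$: $\mathbf L=\mathbf L^1$ is index $\kappa$ in $\free r$, while $\free r^\kappa$ has infinite index, so $\mathbf L\not\subseteq\free r^\kappa$ unless $\kappa=1$.) Your proposed repair at the end — proving $\free r^{\kappa n}\cap\mathbf L=\mathbf L^n$ so the kernel collapses — is therefore not achievable; only one inclusion holds, and the ``massaging into $n$-th powers'' you describe would have to produce the false reverse containment.

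The second, independent, source of difficulty in your plan is working in $\out{\,\cdot\,}$ and trying to transfer innerness across subgroups and quotients. This is entirely avoidable: \autoref{rem: confusion aut out} tells you that because $\burn r{\kappa n}$ is a torsion group, an automorphism has finite order in $\aut{\burn r{\kappa n}}$ if and only if its outer class does. So you may replace the hypothesis by the cleaner statement that some power $\phi^p$ acts trivially on $\burn r{\kappa n}$, i.e.\ $\phi^p(g)g^{-1}\in\free r^{\kappa n}$ for all $g\in\free r$. Once you have that, the proof is immediate from the (correctly oriented) inclusion: since $\free r^{\kappa n}\subseteq\mathbf L^n$, for every $g\in\mathbf L$ we get $\phi^p(g)g^{-1}\in\mathbf L^n$, i.e.\ $\phi^p$ is the identity on $\mathbf L/\mathbf L^n$, and hence the restriction of $\Phi$ to $\mathbf L$ has finite order in $\out{\mathbf L/\mathbf L^n}$. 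No comparison of $\mathbf L/\mathbf L^n$ with a subgroup of $\burn r{\kappa n}$, no transfer of inner automorphisms, and no equality $\free r^{\kappa n}\cap\mathbf L=\mathbf L^n$ is ever needed. This is the route the paper takes; you had the key inequality and then walked past it.
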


\begin{proof}
	According to \autoref{rem: confusion aut out}, the image of $\phi$ in $\aut{\burn r{\kappa n}}$ has finite order.
	Hence there exists $p \in \N$ such that for every $g \in \free r$, $\phi^p(g)g^{-1}$ belongs to $\free r^{\kappa n}$.
	However, $\mathbf L$ has index $ \kappa$ in $\free r$.
	It follows that for every $g \in \free r$, $g^\kappa$ lies in $\mathbf L$. 
	In particular, $\free r^{\kappa n}$ is a subset of $\mathbf L^n$.
	Consequently for every $g \in \mathbf L$, $\phi^p(g)g^{-1}$ belongs to $\mathbf L^n$.
	It exactly means that, as an automorphism of $\mathbf L / \mathbf L^n$, $\phi^p$ is trivial.
	Hence the restriction of $\Phi$ to $\mathbf L$ induces an automorphism of finite order of $\mathbf L /\mathbf L^n$.
\end{proof}

\autoref{res: reduction - poly} becomes a consequence of the following result.

\begin{prop}
\label{res: reduction - finite index subgroup}
	Let $\Phi\in\out{\free{r}}$ be an outer automorphism represented by an RTT $f:G\rightarrow G$.
	Assume that for every edge $e$ in an exponential stratum $H$, for every $p \in \N$, every maximal subpath of $f^p_\#(e)$ that does not cross $H$ is not a loop.
	Then there exists $n_0\in\N$ such that for all odd integers $n\geq n_0$,  $\Phi$ induces an outer automorphism of $\burn{r}{n}$ of infinite order.
\end{prop}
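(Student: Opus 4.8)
The plan is to carry out, for an arbitrary exponentially growing $\Phi$, the red/yellow argument of the introduction. I would first normalise $f$: replacing $\Phi$ (and $f$) by a positive power and refining the filtration — which alters neither the hypothesis nor the conclusion, and which by \cite{BesFeiHan00} makes the exponential strata aperiodic and sends the image of every vertex to a fixed point — and then passing to a further power, I may assume in addition that $f$ has an edge $a$ in an exponential stratum $H=H_k$ with $Df(a)=a$; the initial vertex $s$ of $a$ is then fixed by $f$. Colour the edges of $H_k$ red and those of $G_{k-1}$ yellow, and for a reduced edge-path $w$ in $G_k$ let $\Red{w}$ be the red word obtained by deleting the yellow edges of $w$. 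By \autoref{lem:k-legal} the $k$-legal path $f^p_\#(a)$ splits as an alternating concatenation of non-trivial red subpaths in $H_k$ and yellow subpaths in $G_{k-1}$; pushing this splitting forward by $f_\#$ and deleting yellow edges gives $\Red{f^{p+1}_\#(a)}=\sigma(\Red{f^p_\#(a)})$, where $\sigma\colon x\mapsto\Red{f(x)}$ is a substitution on the red edges whose matrix is a block of the transition matrix of $H_k$, hence primitive ($H_k$ being aperiodic), and for which $a$ is a prefix of $\sigma(a)$ (because $Df(a)=a$). Thus $\Red{f^p_\#(a)}=\sigma^p(a)$.

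Two facts then drive the proof. The first, which I take as an external input, is that \emph{the red words carry no large power}: there is an integer $n_2\geq 2$, depending only on $f$, such that $\sigma^p(a)$ contains no $n_2$-th power, for all $p$ — morally \autoref{res: no big power in the orbit of the substitution} applied to $\sigma$, the shift-periodic alternative being excluded by the finer train-track analysis of \autoref{res: no power in the red}. The second is that, once $n>2n_2+2\xi$ with $\xi$ the constant of \autoref{res: elementary moves}, \emph{an $(n,\xi)$-elementary move preserves the red word}. Indeed, such a move replaces a subword $u^m$ of the current reduced word — obtained from $f^p_\#(a)$ by earlier moves, so with the same red word — by $u^{m-n}$ followed by re-reduction, and $m>n/2-\xi>n_2$; if $u$ contained a red letter then $(\Red{u})^m$ would be an $m$-th power inside $\Red{f^p_\#(a)}$, which is impossible. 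So $u$ is yellow, $u^m$ lies inside a single maximal yellow subpath $Y$, and the move replaces $Y$ by the reduced path $[A\,u^{m-n}B]$, which has the same endpoints as $Y$ (the copies of $u$ form a loop at a vertex of $G_{k-1}$). Since $Y$ is not a loop — this is the hypothesis of the statement — this replacement is a non-trivial path in $G_{k-1}$, so the red edges flanking $Y$ still flank it and no red letter is created or destroyed.

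Now I would argue by contradiction. Suppose $\Phi$ has finite order in $\out{\burn rn}$ for some odd $n\geq n_0:=\max\{n_1,\,2n_2+2\xi+1,\,n_3\}$, where $n_1$ is the constant of \autoref{res: elementary moves} and $n_3$ is discussed below. By \autoref{rem: confusion aut out} the automorphism of $\burn rn$ induced by $f$ also has finite order, so after replacing $\Phi$ and $f$ by a further power depending on $n$ (which preserves the hypothesis, $Df(a)=a$, and aperiodicity) I may assume that the automorphism $\psi$ of $\free r=\pi_1(G,s)$ induced by $f$ satisfies $\psi(g)g^{-1}\in\free r^{n}$ for every $g\in\free r$. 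Close the orbit of $a$ into based loops: fix a reduced path $\delta$ from the terminal vertex of $f_\#(a)$ back to $s$, chosen so that its maximal yellow subpaths are not loops, and set $\gamma_p=[\,f^p_\#(a)\,\delta\,]$, a loop at $s$ of class $g_p\in\free r$. Because $f(s)=s$, a direct computation gives $\psi(g_p)^{-1}g_{p+1}=[\,f_\#(\delta)^{-1}\delta\,]=:c$, a fixed element, so in $\burn rn$ we have $g_{p+1}=g_p c$, hence $g_{1+jn}=g_1c^{jn}=g_1$ in $\burn rn$ for every $j$ (as $c^{n}=1$). By \autoref{res: elementary moves}, $\gamma_1$ and $\gamma_{1+jn}$ can be carried to a common reduced word by $(n,\xi)$-elementary moves; by the second fact these moves preserve the red word, so $\Red{\gamma_1}=\Red{\gamma_{1+jn}}$. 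But $\Red{\gamma_p}$ differs from $\sigma^p(a)=\Red{f^p_\#(a)}$ by at most $|\delta|$ letters, and $|\sigma^p(a)|\to\infty$ since $\lambda_k>1$; for $j$ large the two red words then have different lengths, a contradiction. Hence $\Phi$ has infinite order in $\out{\burn rn}$ for every odd $n\geq n_0$.

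The conceptual skeleton above is the introduction's; the work, and the main obstacle, is the bookkeeping that makes it rigorous. One must realise $f^p_\#(a)$ as a controlled part of a genuine element $g_p$ of $\free r$ while guaranteeing that \emph{every} maximal yellow subpath ever encountered — inside $f^p_\#(a)$, inside $\delta$, at their junction, and after any move — is a non-loop, so that no move can collapse a yellow subpath and alter $\Red{\cdot}$. The maximal yellow subpaths of the $f^p_\#(a)$ are non-loops by hypothesis; this is the whole point of the hypothesis and the reason \autoref{sec: finite index subgroup} passes to the finite-index subgroup $\mathbf L$. The remaining ones form only finitely many $f_\#$-orbits — each arises inside $f$ of some edge of $H_k$, and since $f$ maps each edge of $H_k$ to a path beginning and ending with edges of $H_k$ there is no unbounded merging — so a constant $n_3$ can be fixed with none of them becoming trivial in $\burn rn$ when $n>n_3$, which likewise precludes a collapse. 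Finally one must transport \autoref{res: elementary moves}, stated for reduced words over a basis of $\free r$, to reduced edge-paths of $G$ carrying the colouring, via the standard bounded-distortion identification. The one genuinely hard ingredient, beyond this bookkeeping, is the absence of large powers in the red words, \autoref{res: no power in the red}.
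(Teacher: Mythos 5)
Your conceptual skeleton is the one the paper follows (it is, after all, the introduction's sketch), but several of the steps you wave off as ``bookkeeping'' are exactly where the paper does genuine work, and in a few places your gloss is actually false.

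The first gap is your opening normalisation. You assert that passing to a power and refining the filtration ``alters neither the hypothesis nor the conclusion.'' The paper explicitly warns that this is \emph{not} true: making the exponential strata aperiodic may require splitting an exponential stratum into several pieces, after which a maximal non-$H$ subpath of $f^p_\#(e)$ can suddenly \emph{be} a loop (it may now cross a sibling exponential sub-stratum). The paper only salvages the hypothesis for the \emph{lowest} exponential stratum, and this is why it then restricts to the component $G'$ of $G_k$ containing $H_k$, passes to the free factor $\mathbf H = \pi_1(G')$ via \autoref{res:injective burnside map}, and lands in the one-exponential-stratum, top-stratum setting of \autoref{res: reduction - one exponential stratum}. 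Without this reduction you cannot legitimately invoke \autoref{res: no power in the red} (which is stated and proved only in that setting), and you have no guarantee that the hypothesis survives your normalisation.

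The second gap is the way you close the orbit of $a$ into loops. Your $\gamma_p=[f^p_\#(a)\,\delta]$ with a fixed return path $\delta$ introduces extra \emph{red} letters from $\delta$, and your ``elementary moves preserve the red word'' argument hinges on the red word of the current path having no $n_2$-th power. That is guaranteed for $\Red{f^p_\#(a)}=\sigma^p(a)$, but not for $\Red{\delta}$, and not a priori for the concatenation $\sigma^p(a)\Red{\delta}$ -- an $n_2$-th power can straddle the junction. The paper avoids this entirely: $H_k$ being aperiodic, it finds $p_0$ with $f^{p_0}_\#(e_\bullet)=e_\bullet\nu_0 e_\bullet\nu_1$ and takes the \emph{circuit} $\nu=e_\bullet\nu_0$, a prefix of the $f_\#$-orbit whose red part is a prefix of some $\sigma^q(e_\bullet)$, so \autoref{res: no power in the red} applies verbatim to every $f^p_\#(\nu)$ and to the junction; in addition the paper simply chooses $n_0\geq\dist{\tilde x_0}{g\tilde x_0}$ so that no elementary move at all can act on the short side.

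Third, you apply \autoref{res: elementary moves} -- phrased for reduced words over a basis of $\free r$ -- to edge-paths in $G$, and dismiss the translation as ``the standard bounded-distortion identification.'' The paper is explicit that this translation is \emph{not} routine: a quasi-isometry between the Cayley graph and $\tilde G$ does not carry an overlap bound of the form $\diam(\,\cdot\cap A_u)\geq(n/2-\xi)\|u\|$ from one tree to the other with uniform constants, and the axis being yellow is a statement about $\tilde G$, not the Cayley graph. This is precisely why the paper works with the tree-theoretic \autoref{res: criterium} (same source, \cite[Theorem~4.12]{Coulon:2012tj}) applied directly to the $\free r$-action on $\tilde G$, rather than with the word version. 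Citing the tree version would close this gap painlessly; as written, the step is not justified.

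Finally, your $n_3$/``finitely many $f_\#$-orbits of yellow subpaths'' discussion is a remnant of the introduction's heuristic and is not actually used by the paper at the level of this proposition: once one is in the one-exponential-stratum, top-stratum situation and uses the circuit $\nu$, the non-loop hypothesis alone (tracked inductively as (H1)/(H2) in the paper's proof) prevents any yellow collapse, and no separate constant $n_3$ is needed. So that part of your argument is both unnecessary and, as stated, hand-waving: ``there is no unbounded merging'' would need to be made precise and is not obviously true after elementary moves have already distorted the path.

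In short: the high-level strategy matches the paper's, the key input \autoref{res: no power in the red} is correctly identified as the hard ingredient, but the reductions to the lowest exponential stratum and then to the single-top-exponential-stratum free factor are not optional bookkeeping -- they are needed to make the hypothesis survive normalisation and to make \autoref{res: no power in the red} applicable -- and the transition from the word criterion to edge-paths on $\tilde G$ is a real step, not an identification.
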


\subsection{Automorphisms with only one exponential stratum}
\label{sec:one exp. stratum}

\paragraph{}
The following lemma is proved by the first author in \cite{Cou10a} using the structure of free products.

\begin{lemm}[Coulon {\cite[Lemma 1.9]{Cou10a}}]
\label{res:injective burnside map}
	Let $n$ be an integer.
	Let $\phi$ be an automorphism of $\free r$ which stabilizes a free factor $\mathbf H$.
	We assume that $\phi$ induces an automorphism of finite order of $\burn rn$.
	Then, the restriction of $\phi$ to $\mathbf H$ also  induces an automorphism of finite order of $\mathbf H/\mathbf H^n$.
\end{lemm}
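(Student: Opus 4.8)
The plan is to exploit the retraction attached to the free-factor decomposition. Write $\free r = \mathbf H \ast \mathbf K$ for a complementary free factor $\mathbf K$, and let $\pi\colon\free r\to\mathbf H$ be the retraction which is the identity on $\mathbf H$ and kills $\mathbf K$. The subgroup $\free r^n$ is generated by the set $\{g^n : g\in\free r\}$, which is already stable under conjugation (since $g x^n g^{-1} = (gxg^{-1})^n$); likewise $\mathbf H^n$ is generated by $\{h^n : h\in\mathbf H\}$. As $\pi$ maps $\free r$ onto $\mathbf H$, it carries the first generating set onto the second, so $\pi(\free r^n)=\mathbf H^n$.

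The key step is then the identity $\mathbf H\cap\free r^n=\mathbf H^n$. The inclusion $\mathbf H^n\subseteq\mathbf H\cap\free r^n$ is immediate. Conversely, if $h\in\mathbf H\cap\free r^n$ then, since $\pi$ fixes $\mathbf H$ pointwise, $h=\pi(h)\in\pi(\free r^n)=\mathbf H^n$. This is the one place where we use that $\mathbf H$ is a \emph{free factor} rather than an arbitrary subgroup: in general $\free r^n$ could meet $\mathbf H$ in something strictly larger than $\mathbf H^n$, and it is precisely the existence of the retraction $\pi$ that rules this out. I expect this identity to be the only real obstacle; the rest is a routine diagram chase.

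Finally I would transport the finiteness statement from $\free r$ to $\mathbf H$. Since $\phi$ stabilizes $\mathbf H$, it restricts to an automorphism of $\mathbf H$, which in turn induces an automorphism of $\mathbf H/\mathbf H^n$ (the subgroup $\mathbf H^n$ being characteristic in $\mathbf H$, just as $\free r^n$ is characteristic in $\free r$). By hypothesis the automorphism of $\burn rn$ induced by $\phi$ has finite order, so there is an integer $p\geq1$ with $\phi^p(g)g^{-1}\in\free r^n$ for every $g\in\free r$. Given $h\in\mathbf H$, we have $\phi^p(h)\in\mathbf H$, hence $\phi^p(h)h^{-1}\in\mathbf H$; combined with $\phi^p(h)h^{-1}\in\free r^n$ this yields $\phi^p(h)h^{-1}\in\mathbf H\cap\free r^n=\mathbf H^n$. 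Therefore the automorphism of $\mathbf H/\mathbf H^n$ induced by $\phi|_{\mathbf H}$ has $p$-th power equal to the identity, so it has finite order, as claimed.
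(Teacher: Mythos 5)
Your argument is correct, and it is essentially the approach the paper points to: the paper simply cites \cite[Lemma 1.9]{Cou10a} and notes that the proof there ``uses the structure of free products,'' which is exactly what your retraction $\pi\colon\mathbf H\ast\mathbf K\to\mathbf H$ exploits. The identity $\mathbf H\cap\free r^n=\mathbf H^n$ you isolate is precisely the injectivity of the natural map $\mathbf H/\mathbf H^n\hookrightarrow\burn rn$ that the lemma's label alludes to, and the transport of finiteness of order is then routine.
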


\paragraph{}
Let $\Phi\in\out{\free{r}}$ be an exponentially growing outer automorphism,  and let $f:G\rightarrow G$ be an RTT representing $\Phi$ with a filtration $\emptyset=G_0 \subset G_1 \subset \dots \subset G_m=G$.
By \autoref{rem:pol/exp}~(\ref{item: >1 exp}), $f$ has at least one exponential stratum.
We assume that $f$ satisfies the additional assumption of \autoref{res: reduction - finite index subgroup}, i.e. for every edge $e$ in an exponential stratum $H$, for every $p \in \N$, every maximal subpath of $f^p_\#(e)$ that does not cross $H$ is not a loop.
By replacing if necessarily $\Phi$ by a power of $\Phi$, we can assume that the exponential strata of the RTT are aperiodic.
Note that this operation does not affect the graph $G$.
However one might need to refine the filtration of the RTT.
In particular the RTT does not necessarily satisfy anymore the additional assumption of \autoref{res: reduction - finite index subgroup}.
Nevertheless for every edge $e$ in the \emph{lowest} exponential stratum $H_k$  for every $p \in \N$, every maximal subpath of $f^p_\#(e)$ that does not cross $H_k$ is not a loop.

\paragraph{}
Let $G'$ be the connected component of the graph $G_k$  which contains $H_k$.
We assert that $G'$ is $f$-invariant, i.e. $f(G')\subseteq G'$.
Indeed, $G'\cap f(G')$ is non-empty (it contains $H_k$) and $f(G')$ is connected.
Let $\mathbf H$ be the free factor of $\free{r}$ defined by $G'\subseteq G$, and let $\Psi\in\out{\mathbf H}$ be the outer automorphism induced by the restriction $f'=\restriction f{G'}:G'\rightarrow G'$.
We note that $\restriction f{G'}:G'\rightarrow G'$ is an RTT representing $\Psi$, which has exactly one exponential stratum, namely $H_k$, which is aperiodic and the top stratum of $f'$.
In particular $\Psi$ has exponential growth.

\begin{lemm}
\label{lemm:restriction}
	If $\Psi$ induces an outer automorphism of $\mathbf H/\mathbf H^n$ of infinite order, then $\Phi$ also induces an outer automorphism of $\burn{r}{n}$ of infinite order.
\end{lemm}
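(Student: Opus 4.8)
The plan is to prove the contrapositive: assuming that $\Phi$ induces an outer automorphism of $\burn rn$ of finite order, we deduce that $\Psi$ induces an outer automorphism of $\mathbf H/\mathbf H^n$ of finite order. The substantial input will be \autoref{res:injective burnside map}, which is tailored to an automorphism of $\free r$ stabilizing a free factor; almost everything else is bookkeeping. The one point that requires care is to produce a representative of $\Phi$ in $\aut{\free r}$ that genuinely stabilizes $\mathbf H$ (not merely up to conjugacy) and whose restriction to $\mathbf H$ lies in the outer class $\Psi$.

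First I would fix a base point $x_0$ of $G$ lying in $G'$ and use the marking to identify $\pi_1(G,x_0)$ with $\free r$, so that $\mathbf H=\pi_1(G',x_0)$ is precisely the free factor of $\free r$ determined by the subgraph $G'$ (recall that the fundamental group of a subgraph of a graph is a free factor of the ambient fundamental group). Since $G'$ is connected, since $f(G')\subseteq G'$, and since $x_0$ and $f(x_0)$ both lie in $G'$, we may choose a path $\gamma$ from $x_0$ to $f(x_0)$ inside $G'$. The homotopy equivalence $f$ together with $\gamma$ then induces an automorphism $\phi$ of $\pi_1(G,x_0)=\free r$ lying in the outer class $\Phi$. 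Because both $\gamma$ and $f(G')$ stay inside $G'$, the automorphism $\phi$ maps $\mathbf H=\pi_1(G',x_0)$ onto itself, and its restriction $\phi|_{\mathbf H}$ is exactly the automorphism of $\mathbf H$ induced by $f'=\restriction f{G'}:G'\rightarrow G'$ together with $\gamma$; since $f'$ is an RTT representing $\Psi$, the outer class of $\phi|_{\mathbf H}$ is $\Psi$.

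Now, since $\Phi$ induces an outer automorphism of $\burn rn$ of finite order, \autoref{rem: confusion aut out} applied to $\burn rn$ shows that its representative $\phi$ induces an automorphism of $\burn rn$ of finite order. The automorphism $\phi$ stabilizes the free factor $\mathbf H$, so \autoref{res:injective burnside map} applies and yields that the restriction $\phi|_{\mathbf H}$ induces an automorphism of $\mathbf H/\mathbf H^n$ of finite order. Consequently $\Psi$, being the outer class of $\phi|_{\mathbf H}$, induces an outer automorphism of $\mathbf H/\mathbf H^n$ of finite order. This establishes the contrapositive, and hence the lemma.

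The only place where anything could go wrong is the base-point and marking argument: one must make sure the free-factor decomposition $\free r = \mathbf H \ast (\text{the rest})$ coming from the subgraph $G'$ is respected by an honest automorphism in the class $\Phi$, rather than only up to an inner automorphism. The invariance $f(G')\subseteq G'$ combined with the connectedness of $G'$ is exactly what allows one to route the correcting path $\gamma$ inside $G'$ and thereby obtain such a representative; once this is in place, the conclusion is a direct appeal to \autoref{rem: confusion aut out} and \autoref{res:injective burnside map}.
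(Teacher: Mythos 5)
Your proof is correct and follows the same route as the paper: produce a representative $\phi\in\Phi$ stabilizing the free factor $\mathbf H$, invoke Remark~\ref{rem: confusion aut out} to pass between outer and honest automorphisms of $\burn rn$, and then apply Lemma~\ref{res:injective burnside map} to the restriction $\phi|_{\mathbf H}$. The only difference is that you spell out the base-point-and-correcting-path construction of the stabilizing representative, which the paper states without detail; that extra care is sound and does not change the argument.
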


\begin{proof}
	There exists an automorphism $\phi$ in the class of $\Phi$ which stabilizes $\mathbf H$.
	Assume that $\Phi$ induces an outer automorphism of $\burn rn$ of finite order.
	In particular the image of $\phi$ in $\aut{\burn rn}$ has finite order (see \autoref{rem: confusion aut out}).
	It follows from the previous lemma that the restriction to $\mathbf H$ of $\phi$ (and thus $\Psi$) induces an automorphism (outer automorphism) of finite order of $\mathbf H/\mathbf H^n$.
\end{proof}

It follows from our discussion that \autoref{res: reduction - finite index subgroup} is a consequence of the following statement.

\begin{prop}
\label{res: reduction - one exponential stratum}
	Let $\Phi\in\out{\free{r}}$ be an outer automorphism represented by an RTT $f:G\rightarrow G$ with exactly one exponential stratum $H$, which is aperiodic and the top stratum of $f$.
	Assume that for every edge $e$ in $H$, for every $p \in \N$, every  maximal subpath of $f^p_\#(e)$ that does not cross $H$ is not a loop.
	Then there exists $n_0\in\N$ such that for all odd integers $n\geq n_0$,  $\Phi$ induces an outer automorphism of $\burn{r}{n}$ of infinite order.
\end{prop}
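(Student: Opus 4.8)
The plan is to argue by contradiction, transferring to an arbitrary relative train-track the argument run in the introduction on the example $\psi$. The three inputs are \autoref{res: elementary moves}, the absence of large powers in the ``red part'' of the iterates (i.e. \autoref{res: no power in the red}, proved in \autoref{sec:no big powers}), and the hypothesis of the proposition that the maximal subpaths of the $f^p_\#(e)$ contained in $G_{k-1}$ are never loops.

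\textbf{The red--yellow colouring.} Write $H=H_k$ for the unique (top) exponential stratum and colour its edges \emph{red} and those of $G_{k-1}$ \emph{yellow}. If $\alpha$ is a $k$-legal path or circuit, \autoref{lem:k-legal} turns its decomposition into maximal red and maximal yellow subpaths into a splitting; let $\Red\alpha$ be the path, or circuit, of red edges obtained by deleting the yellow subpaths. Property (RTT-i) forces $f(e)$ to begin with an edge of $H$ for every red edge $e$, so no red edge is cancelled when the $f(e)$'s are concatenated along a $k$-legal path; hence $\Red\alpha$ is again $k$-legal (thus reduced) and $\Red{f_\#(\alpha)}=\sigma(\Red\alpha)$, where $\sigma$ is the \emph{red substitution} $\sigma(e)=\Red{f(e)}$ on the oriented edges of $H$. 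Its matrix is $M_k$; since $H$ is aperiodic $\sigma$ is primitive, and since $H$ is exponential its PF-eigenvalue $\lambda$ is $>1$. Consequently, whenever $\Red\alpha\neq1$ the length $\bigl|\Red{f^p_\#(\alpha)}\bigr|=\bigl|\sigma^p(\Red\alpha)\bigr|$ goes to infinity; in particular $\bigl|\sigma^p(\Red\alpha)\bigr|>\bigl|\Red\alpha\bigr|$ once $p$ exceeds a constant depending only on $M_k$.

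\textbf{Set-up of the contradiction.} Choose a $k$-legal circuit $\gamma$ that crosses $H$ and such that no maximal yellow subpath of any $f^p_\#(\gamma)$ is a loop: e.g. build $\gamma$ from a single red edge (a turn between a red and a yellow edge being automatically legal by (RTT-i)), so that the maximal yellow subpaths of $f^p_\#(\gamma)$ are, up to their extremal vertices, maximal subpaths of $f^p_\#(e)$ in $G_{k-1}$, which by hypothesis are not loops. Let $n_1,\xi$ be given by \autoref{res: elementary moves} and let $n_2\geq2$ be an integer such that no $\Red{f^p_\#(\gamma)}=\sigma^p(\Red\gamma)$ contains an $n_2$-th power (\autoref{res: no power in the red}); set $n_0=\max\{n_1,\,2n_2+2\xi\}$. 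Let $n\geq n_0$ be odd and assume $\Phi$ induces an element of finite order in $\out{\burn rn}$. By \autoref{rem: confusion aut out} some $\phi^p$ ($p\geq1$) becomes inner in $\aut{\burn rn}$, and replacing $p$ by a multiple we may also assume $p$ large enough that $\bigl|\sigma^p(\Red\gamma)\bigr|>\bigl|\Red\gamma\bigr|$. Then the circuits $f^p_\#(\gamma)$ and $\gamma$ are conjugate in $\burn rn$, so by the cyclic-word form of \autoref{res: elementary moves} some sequences of $(n,\xi)$-elementary moves carry $f^p_\#(\gamma)$ and $\gamma$ to one and the same cyclic word.

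\textbf{The red circuit is an invariant, and the conclusion.} Along each of the two sequences one shows by induction on the step that every $(n,\xi)$-elementary move only alters a maximal yellow subpath and never collapses it, so that the red circuit stays constant. Indeed, a move acts on a block $v^m$ with $m>n/2-\xi\geq n_2$; if $\Red v\neq1$ then $(\Red v)^m$ is a subword of the red circuit of the current word, which by the inductive hypothesis is $\Red{f^p_\#(\gamma)}$ (resp. $\Red\gamma$, itself a subword of $\Red{f^p_\#(\gamma)}$), contradicting the choice of $n_2$; hence $v$ is yellow and the move replaces a maximal yellow subpath $\beta$ by a reduced path $\beta'$ with the same endpoints. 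As $\beta$ is not a loop, $\beta'$ is non-trivial, the red edges flanking $\beta$ stay non-adjacent, and the red circuit is unchanged. At the end we get $\Red{f^p_\#(\gamma)}=\Red\gamma$ as circuits, contradicting $\bigl|\Red{f^p_\#(\gamma)}\bigr|=\bigl|\sigma^p(\Red\gamma)\bigr|>\bigl|\Red\gamma\bigr|$. So $\Phi$ has infinite order in $\out{\burn rn}$ for every odd $n\geq n_0$. The main obstacle is not this deduction — which is essentially bookkeeping around \autoref{lem:k-legal} — but the quantitative input \autoref{res: no power in the red}: that $\sigma^p(\Red\gamma)$ carries no fixed power. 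Through \autoref{res: no big power in the orbit of the substitution} this reduces to ruling out the case in which the infinite word fixed by the red substitution is shift-periodic (the phenomenon of \autoref{rem: primitive shift periodic substitution}, where $\sigma^p(\cdot)$ does contain arbitrarily large powers), and excluding (or controlling) that case is where the finer structure of relative train-tracks and the passage to a finite-index subgroup of \autoref{sec: finite index subgroup} must be used.
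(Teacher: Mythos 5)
Your overall strategy is the paper's: prove the red part of the iterates is an invariant of elementary moves (using the absence of large powers, \autoref{res: no power in the red}, plus the no-loop hypothesis to rule out collapsing yellow subpaths), then derive a contradiction from length growth. But by insisting on working with \emph{cyclic} words and conjugacy, you introduce two real gaps that the paper carefully avoids by working instead with based paths in the universal cover $\tilde G$.

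First, there is no ``cyclic-word form'' of \autoref{res: elementary moves} available in the paper. The tool actually used is \autoref{res: criterium}, which is stated for \emph{based} orbit points $g\tilde x_0, g'\tilde x_0$ in a tree, and compares $g$ and $\phi^p(g)$ as \emph{elements} of $\burn rn$ (which suffices by \autoref{rem: confusion aut out}, since finite order of $\phi$ in $\aut{\burn rn}$ forces some $\phi^p=\mathrm{id}$ on $\burn rn$). The criterion still produces \emph{two} sequences of moves, one on each side; the paper neutralizes the $g$-side by also requiring $n\geq\dist{\tilde x_0}{g\tilde x_0}$ in the definition of $n_0$, so that no $(n,\xi)$-move can be performed on $[\tilde x_0,g\tilde x_0]$ at all. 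You instead run a parallel induction on the $\gamma$-side; that can be made to work, but it relies on the unstated cyclic criterion. Second — and more substantively — \autoref{res: no power in the red} is a statement about the \emph{linear} word $\Red{f^p_\#(e_\bullet)}$, a prefix of the fixed infinite word $\sigma^\infty(e_\bullet)$; Moss\'e's theorem (\autoref{res: proposition mosse}) controls powers inside that infinite word, not inside its cyclic conjugates. Your argument applies the ``no $n_2$-th power'' bound to the \emph{cyclic} red word of the current circuit, which is a genuinely stronger claim that isn't proven. The paper sidesteps this by phrasing \autoref{res; no power in the red - geometric statement} for paths $\beta$ in $\tilde G$ whose (linear) red word equals $\Red{f^p_\#(\nu)}$, itself a prefix of some $\sigma^{p'}(e_\bullet)$.

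Finally, the construction ``build $\gamma$ from a single red edge'' is too imprecise for the no-loop hypothesis to apply: the hypothesis concerns maximal yellow subpaths of $f^p_\#(e)$ for $e$ a red edge, whereas a circuit $e\alpha$ contributes an extra yellow junction $f^p_\#(\alpha)$ which is not covered by the hypothesis. The paper's choice — fix $e_\bullet$ with $Df(e_\bullet)=e_\bullet$, take $\nu=e_\bullet\nu_0$ to be the prefix of $f^{p_0}_\#(e_\bullet)$ ending just before the next occurrence of $e_\bullet$, and view $\nu$ as the lift $[\tilde x_0,g\tilde x_0]$ — ensures every maximal yellow subpath of $f^p_\#(\nu)$ is one of $f^{p+p_0}_\#(e_\bullet)$ and hence not a loop. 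Your closing remark that the finite-index passage of \autoref{sec: finite index subgroup} is used ``to exclude the shift-periodic case'' also misattributes the role of that reduction: it is what furnishes the no-loop hypothesis \emph{assumed} in the present proposition; ruling out shift-periodicity (\autoref{res: sigma(e) not shift periodic}) then uses that hypothesis together with an abelianization argument, not the finite-index cover directly.
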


\section{Tracking powers}
\label{sec:no big powers}

\paragraph{}The next two sections are dedicated to the proof of \autoref{res: reduction - one exponential stratum}.
As we explained in the introduction, the goal is to understand in which extend a periodic path can appear in the orbit of a circuit under the iteration of the train-track map.
This is the purpose of this section.

\paragraph{}
The general strategy is the following. 
We consider an outer automorphism $\Phi$ represented by an RTT $f \colon G \rightarrow G$ with a single exponential stratum $H$ which is aperiodic.
Then, we fix an edge $e_\bullet$ in $H$.
For every $p \in \N$, we look at the path obtained by removing from $f_\#^p(e_\bullet)$ all the edges which are not in $H$.
This sequence can be interpreted as the orbit of $e_\bullet$ under a substitution over the set of oriented edges of $H$ (\autoref{res: substitution and RTT coincide on the red}).
It follows from the aperiodicity of $H$ that this substitution is primitive.
Therefore we would like to apply \autoref{res: no big power in the orbit of the substitution}.
We need to rule out first the case of an infinite shift-periodic word, though (\autoref{res: no big power in the orbit of the substitution}~(\ref{enu: no big power in the orbit of the substitution - not shift-preiodic})). 
The dynamic of the substitution is not sufficient to conclude here. 
\autoref{rem: primitive shift periodic substitution} provides indeed  an example of a primitive substitution $\sigma$ with an infinite shift-periodic fixed point.
However the particularity of this example is that $\sigma$ does not represent an automorphism of $\free 3$.
Our proof (see \autoref{res: sigma(e) not shift periodic}) strongly uses the fact that the substitution we are looking at comes from an automorphism of the free group.

\paragraph{}
From now on, $\Phi$ denotes an outer automorphism of ${\free{r}}$  which can be represented by an RTT $f:G\rightarrow G$ with exactly one exponential stratum $H$.
Moreover, $H$ is aperiodic and the top stratum of $f$.
We denote by $\mathcal E$ the set of all the oriented edges of $H$.
In addition, we assume that for every $e\in \mathcal E$, for every $p\in\N$, every maximal subpath of $f^p_\#(e)$ that does not cross $H$ is not a loop.

\paragraph{}
By replacing if necessary $\Phi$ by a power of $\Phi$ we can assume that for every vertex $v$ of $G$, $f(v)$ is fixed by $f$ and that there exists $e_\bullet \in \mathcal E$ such that $Df(e_\bullet) = e_\bullet$.
Note that this operation does not affect the graph $G$ or the exponential stratum. 
In particular, $H$ is still the only exponential stratum of $f$.
It is aperiodic and the top stratum.
By choice of $e_\bullet$, $f$ fixes the initial vertex $x_0$ of $e_\bullet$.
Thus it naturally defines an automorphism $\phi\in\aut{\pi_1(G, x_0)}$ in the outer class $\Phi$: if $g$ is an element of $\pi_1(G, x_0)$ represented by a loop $\alpha$ based at $x_0$, then $\phi(g)$ is the homotopy class of $f(\alpha)$ (relative to $x_0$).

\subsection{The yellow-red decomposition.}

\paragraph{}
We refer to the edges of $H$ as \emph{red edges} and to the edges of  $G\setminus H$ as \emph{yellow edges}.
Recall that $\tilde{G}$ denotes the universal cover of $G$.
An edge of $\tilde{G}$ can be labelled by the edge of $G$ of which it is the lift.
In particular its color is given by the color of its label.

\paragraph{}
A $k$-legal path of $G$ (where $k$ is the height of $H$) will be call a \emph{red-legal} path.
A path (in $G$ or in $\tilde{G}$) is a \emph{yellow path} if it only crosses yellow edges.
\emph{Red paths} are defined in the same way.
Any path $\alpha$ (in $G$ or in $\tilde{G}$) can be decomposed as a concatenation of maximal yellow and red subpaths:
$\alpha=\alpha_1\dots \alpha_q$, where $\alpha_i$ ($1\leq i\leq q$) is a non trivial subpath of $\alpha$ which is either yellow or red, and  for all $i \in \intvald 1{q-1}$, $\alpha_i$ and $\alpha_{i+1}$ have not the same color.
According to \autoref{lem:k-legal}, this decomposition is a splitting of $\alpha$.

\paragraph{The red word associated to a path.}

We associate to any path of edges $\alpha$ in $G$ or $\tilde G$ a word $\Red \alpha$ over the alphabet $\mathcal E$. 
As a path of edges, $\alpha$ is labelled by a word over the alphabet that consists of all oriented edges of $G$.
The word $\Red{\alpha}$ is obtained from this word by removing all the letters corresponding to yellow edges.
We stress on the fact  that if $\alpha$ is a reduced path, then $\Red{\alpha}$ is not, in general, a reduced word.

\subsection{The induced substitution on red edges.}

\paragraph{Definition and first properties.}

We associate to the RTT $f$ a substitution $\sigma$ on $\mathcal E$ called the \emph{induced substitution}.
It is defined as follows.
\begin{equation*}
	\text{For every } e \in \mathcal E, \quad \sigma(e)=\Red{f(e)}.
\end{equation*}

\begin{lemm}
\label{res: substitution and RTT coincide on the red}
	Let $\alpha$ be a red-legal path in $G$.
	For all $p\in\N$, we have
	\begin{equation*}
		\Red{f_\#^p(\alpha)}=\sigma^p(\Red{\alpha}).
	\end{equation*}
\end{lemm}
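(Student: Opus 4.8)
The plan is to argue by induction on $p$; the substance lies in the case $p=1$, where the relative-train-track structure is used, and the general case then follows by a routine iteration. The case $p=0$ is trivial since $f^0_\#$ and $\sigma^0$ are the identity.

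For $p=1$, I would start from the decomposition of $\alpha$ into its maximal monochromatic subpaths, $\alpha=\alpha_1\alpha_2\cdots\alpha_q$. Since $\alpha$ is red-legal, \autoref{lem:k-legal} applied to the exponential stratum $H$ shows that this decomposition is a \emph{splitting}, so that $f_\#(\alpha)=f_\#(\alpha_1)f_\#(\alpha_2)\cdots f_\#(\alpha_q)$ as a genuine reduced concatenation. The operation $\Red{\cdot}$ only erases the yellow letters, hence it turns such a concatenation of paths into the corresponding concatenation of words; we obtain $\Red{f_\#(\alpha)}=\prod_{i=1}^{q}\Red{f_\#(\alpha_i)}$ and likewise $\Red{\alpha}=\prod_{i=1}^{q}\Red{\alpha_i}$. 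It therefore suffices to analyse each term $\alpha_i$.

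If $\alpha_i$ is yellow, it is a path in $G_{k-1}=G\setminus H$, which is $f$-invariant; hence $f_\#(\alpha_i)$ lies again in $G_{k-1}$, and both $\Red{\alpha_i}$ and $\Red{f_\#(\alpha_i)}$ are the empty word, so such terms contribute nothing on either side. If $\alpha_i=e_1e_2\cdots e_m$ is red, with $e_1,\dots,e_m\in\mathcal E$, then, $\alpha$ being red-legal, each turn $(e_j\inv,e_{j+1})$ is legal, which gives in particular $Df(e_j\inv)\neq Df(e_{j+1})$, i.e. the last edge of $f(e_j)$ is not the inverse of the first edge of $f(e_{j+1})$. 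As moreover each $f(e_j)$ is itself a reduced edge path, no cancellation occurs anywhere in $f(e_1)f(e_2)\cdots f(e_m)$, so $f_\#(\alpha_i)=f(e_1)f(e_2)\cdots f(e_m)$. Applying $\Red{\cdot}$ and using the definition $\sigma(e)=\Red{f(e)}$ gives $\Red{f_\#(\alpha_i)}=\Red{f(e_1)}\cdots\Red{f(e_m)}=\sigma(e_1)\cdots\sigma(e_m)=\sigma(\Red{\alpha_i})$. Recombining the terms yields $\Red{f_\#(\alpha)}=\sigma(\Red{\alpha})$, which is the case $p=1$.

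For the inductive step, assume the identity for $p-1$, with $p\geq 2$. Iterating property (RTT-iii), the path $f^{p-1}_\#(\alpha)$ is again red-legal, so the case $p=1$ applies to it and the induction hypothesis finishes the computation: $\Red{f^{p}_\#(\alpha)}=\Red{f_\#\big(f^{p-1}_\#(\alpha)\big)}=\sigma\big(\Red{f^{p-1}_\#(\alpha)}\big)=\sigma\big(\sigma^{p-1}(\Red{\alpha})\big)=\sigma^{p}(\Red{\alpha})$. The one step I expect to require care is the reducedness of $f(e_1)f(e_2)\cdots f(e_m)$ for a red subpath of a red-legal path: this is precisely the "no cancellation at the seams" consequence of the legality of the turns $(e_j\inv,e_{j+1})$, and it is the only place where the relative-train-track hypotheses genuinely enter; everything else is bookkeeping about how $\Red{\cdot}$ interacts with concatenation and with the $f$-invariance of $G_{k-1}$.
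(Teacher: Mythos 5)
Your proof is correct and follows essentially the same route as the paper: decompose $\alpha$ via the yellow--red splitting given by \autoref{lem:k-legal}, note that yellow blocks stay yellow under $f$, and conclude that $\Red{\cdot}$ commutes with $f_\#$ on a red-legal path before inducting on $p$ using (RTT-iii). The only presentational difference is that you split into \emph{maximal} monochromatic blocks and then justify within each red block that the seams of $f(e_1)\cdots f(e_m)$ do not cancel (via legality of the turns $(e_j^{-1},e_{j+1})$), whereas the paper directly writes the finer splitting $\alpha_1 e_1\alpha_2 e_2\cdots$ with single red edges; your version makes explicit a sub-step that the paper leaves implicit, but the argument is the same.
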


\begin{proof}
We consider a decomposition of $\alpha$ as $\alpha=\alpha_1e_1\alpha_2e_2\dots \alpha_q e_q \alpha_{q+1}$ where each $e_i \in \mathcal E$ is a red edge, and each $\alpha_i$ is a (possibly trivial) yellow subpath.
In particular, $\Red{\alpha}=e_1e_2\dots e_q$.
The path $\alpha$ being red-legal, \autoref{lem:k-legal} leads to
\begin{eqnarray*}
	f_\#(\alpha) & = & f_\#(\alpha_1e_1\alpha_2e_2\dots \alpha_q e_q \alpha_{q+1})\\
 	& = & f_\#(\alpha_1)f(e_1)f_\#(\alpha_2)f(e_2)\dots f_\#(\alpha_q) f(e_q) f_\#(\alpha_{q+1}).
\end{eqnarray*}
However, $f$ sends yellow edges to yellow paths.
We deduce that
\begin{eqnarray*}
	\Red{f_\#(\alpha)} & = &  \Red{f(e_1)}\Red{f(e_2)}\dots \Red{f(e_q)} \\
	& = & \sigma(e_1)\sigma(e_2)\dots \sigma(e_q)
 	 = \sigma(e_1 e_2\dots e_q)
 	= \sigma(\Red{\alpha})
\end{eqnarray*}
The image by $f_\#$ of a red-legal path is still a red-legal path.
Therefore for all $p \in \N$,
\begin{displaymath}
	\Red{f^{p+1}_\#(\alpha)} = \Red{f_\#\left(f^p_\#(\alpha)\right)} = \sigma \left(\Red{f^p_\#(\alpha)}\right).
\end{displaymath}
The result follows by induction on $p$.
\end{proof}

\paragraph{Primitivity of the induced substitution.}
The material of this paragraph is widely inspired by the work of P.~Arnoux and all. \cite[Section 3]{Arnoux:2006wt}.
Recall that $\Theta : \mathcal E \rightarrow \mathcal E$ is the map which sends $e$ to $e^{-1}$.
We extend $\Theta$ to the free monoid $\mathcal E^*$ in the following way.
Let $w$ be an element of $\mathcal{E}^*$.
By definition, it can be written $w =e_1 e_2\dots e_q$ where $e_i\in \mathcal{E}$.
We put $\Theta(w) = e_q\inv\dots e_2\inv e_1\inv$.
It defines an involution of $\mathcal{E}^*$ called the \emph{flip map}.
Moreover, we observe that for all edges $e \in \mathcal E$, $\sigma\circ \Theta(e) = \Theta \circ \sigma(e)$.
Thus $\sigma$ and $\Theta$ commute on $\mathcal E^*$.
The substitution $\sigma$ is said to be \emph{orientable} with respect to a subset $\vec {\mathcal E}$ of $\mathcal E$ if 
\begin{enumerate}[(i)]
\item \label{item: prefered} $\vec {\mathcal E}$ and $\Theta(\vec {\mathcal E})$ make a partition of $\mathcal E$,
\item $\sigma(\vec {\mathcal E})\subset \vec {\mathcal E}^*$.
\end{enumerate}
Note that (\ref{item: prefered}) just says that $\vec {\mathcal E}$ is a preferred set of oriented edges for $H$.
In that case, $\sigma$ induces a substitution of $\vec {\mathcal E}^*$, that we still denote by $\sigma$.

\paragraph{} By assumption the red stratum $H$ of $f$ is aperiodic.
In other words its transition matrix $M$ is primitive.
Applying \cite[Proposition 3.7]{Arnoux:2006wt}, we know that
\begin{itemize}
	\item either $\sigma$ is not orientable and then $\sigma$ is a primitive substitution  on the alphabet $\mathcal{E}$, 
	\item or there exists a subset $\vec {\mathcal E}$ of $\mathcal E$ such that $\sigma$ is orientable with respect to $\vec {\mathcal E}$ and then $\sigma$ induces a primitive substitution  on the alphabet  $\vec {\mathcal E}$.
\end{itemize}
Thus in both cases, there exists a subset $\mathcal E_\bullet$ of $\mathcal E$ containing $e_\bullet$ such that $\sigma(\mathcal E_\bullet) \subset \mathcal E_\bullet^*$ and the substitution $\sigma : \mathcal E_\bullet^* \rightarrow \mathcal E_\bullet^*$ is primitive.

\subsection{A red word without large powers}

\paragraph{The infinite red word $\sigma^\infty(e_\bullet)$.}

Recall that $e_\bullet$ is a red edge of $\mathcal E$ that has been chosen in such a way that $Df(e_\bullet)=e_\bullet$.
Because the red stratum is aperiodic, $f(e_\bullet)=e_\bullet \cdot\alpha$ where $\Red{\alpha}$ is non trivial.
In particular $e_\bullet$ is a prefix of $\sigma(e_\bullet)$.
According to \autoref{res: no big power in the orbit of the substitution} the sequence $(\sigma^p(e_\bullet))$ converges to an infinite word $\sigma^\infty(e_\bullet)$ of $\mathcal E_\bullet^\N$. 
Note that $f(e_\bullet)=e_\bullet \cdot\alpha$ is a splitting.
Hence for every $p \in \N$,
\begin{equation*}
	f_\#^p(e_\bullet) = e_\bullet \cdot \alpha \cdot f_\#(\alpha) \cdot \ldots \cdot f^{p-1}_\#(\alpha). 
\end{equation*}
Hence $(f_\#^p(e_\bullet))$ also converges to an infinite path
\begin{equation*}
	f_\#^\infty(e_\bullet) = e_\bullet \cdot \alpha \cdot f_\#(\alpha) \cdot \ldots \cdot f^p_\#(\alpha)\cdot \ldots
\end{equation*}

\begin{prop}
\label{res: sigma(e) not shift periodic}
	The infinite word $\sigma^\infty(e_\bullet)$ is not shift periodic.
\end{prop}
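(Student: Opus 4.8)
The plan is to argue by contradiction; the only place where the hypothesis that $\sigma$ comes from an automorphism of $\free r$ will genuinely be used is a determinant computation in homology. Suppose $\sigma^\infty(e_\bullet)$ is shift-periodic. Then $\sigma^\infty(e_\bullet)=u^\infty$ for its primitive root $u$, and \autoref{res: no big power in the orbit of the substitution}~(\ref{enu: no big power in the orbit of the substitution - shift-periodic}), applied to the primitive substitution $\sigma\colon\mathcal E_\bullet^*\to\mathcal E_\bullet^*$ and the letter $e_\bullet$, gives an integer $q\geq2$ with $\sigma(u)=u^q$. Assume for now that $\sigma$ is orientable, so that $\mathcal E_\bullet$ is a preferred set of oriented edges for $H$ and $u$ involves only the chosen orientations; the non-orientable case I would reduce to this one by passing to the orientation double cover of $G$, much as in \autoref{res: lifting a train track in a train track}. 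Write $\ell=|u|$; note that $u$ begins with $e_\bullet$, since $\sigma(e_\bullet)$ is a prefix of $\sigma^\infty(e_\bullet)$ and $f(e_\bullet)=e_\bullet\cdot\alpha$ begins with $e_\bullet$.

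Next I would realise $u$ by a red-legal circuit. In the ray $f^\infty_\#(e_\bullet)$, let $U_1$ be the sub-path running from the initial vertex of its first red edge (namely $e_\bullet$, whose initial vertex $x_0$ is fixed by $f$) to the initial vertex of its $(\ell+1)$-st red edge; this $(\ell+1)$-st red edge is again $e_\bullet$ traversed positively, so $U_1$ is a closed path at $x_0$ with $\Red{U_1}=u$. Since $U_1$ is a sub-path of the red-legal ray $f^\infty_\#(e_\bullet)$, since in the orientable case $e_\bullet^{-1}$ is not a letter of $u$, and since colour-change junctions are splitting points by \autoref{lem:k-legal}, one checks readily that $U_1$ is cyclically reduced and red-legal. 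Hence, by the circuit version of \autoref{res: substitution and RTT coincide on the red},
\begin{equation*}
	\Red{f_\#(U_1)}=\sigma\bigl(\Red{U_1}\bigr)=\sigma(u)=u^q .
\end{equation*}

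The contradiction then comes from homology. Put $Q=H_1(G)/H_1(G_{k-1})$, where $H=H_k$ is the (top) exponential stratum. Since $H_1(G,G_{k-1})$ is free, $Q$ is a finitely generated free abelian group, and the natural injection $Q\hookrightarrow H_1(G,G_{k-1})$ identifies the class of a red-legal circuit with the signed ``red content'' of its red word. As $G_{k-1}$ is $f$-invariant, $\phi_*$ preserves the short exact sequence $0\to H_1(G_{k-1})\to H_1(G)\to Q\to0$ and hence induces $\bar\phi\in\mathrm{GL}(Q)$; multiplicativity of determinants along this sequence together with $\det\bigl(\phi_*\text{ on }H_1(G)\bigr)=\pm1$ forces $\det\bar\phi=\pm1$. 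Let $v\in Q$ be the class of $U_1$, that is, the red content of $u$; then $v\neq0$ because $u$ uses only the preferred orientations and every red edge occurs in $u$. The displayed identity gives $\bar\phi(v)=qv$, the red content of $u^q$ being $q$ times that of $u$. Thus $q$ is an integer eigenvalue of $\bar\phi\in\mathrm{GL}(Q)$, so $t-q$ divides the characteristic polynomial of $\bar\phi$ in $\Z[t]$; evaluating at $0$ shows that $q$ divides the constant term, which is $\pm\det\bar\phi=\pm1$ — impossible for $q\geq2$. This contradiction proves that $\sigma^\infty(e_\bullet)$ is not shift-periodic.

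The step I expect to be the main obstacle is precisely $\det\bar\phi=\pm1$: this is the only point where it matters that $\sigma$ is extracted from an RTT of an \emph{automorphism}, and it is exactly what fails in \autoref{rem: primitive shift periodic substitution}, where the relevant transition matrix is singular. The remaining work is routine but must be carried out carefully: constructing $U_1$ and checking that it is cyclically reduced and red-legal; identifying the image of $Q$ in $H_1(G,G_{k-1})$ with red content and verifying $v\neq0$; and, to treat the non-orientable case, transporting both this construction and the standing hypothesis that no maximal yellow subpath of an $f^p_\#(e)$ is a loop through the orientation double cover of $G$.
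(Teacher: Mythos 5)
Your proof takes a genuinely different route from the paper's, and in the orientable case it is a real simplification. The paper builds an auxiliary coloured graph $\Gamma$ together with a map $\rho:\Gamma\to G$, proves $\rho$ is locally injective (this is where the standing hypothesis that no maximal yellow subpath of an $f^p_\#(e)$ is a loop is actually used, see \autoref{res: p loc injective}), realises the period loop as a free generator $h$ of $\mathbf H=\pi_1(\Gamma)$, lifts $f$ to $\hat f:\Gamma\to\Gamma$ (\autoref{res: lifting f in Gamma}), and then abelianises $\mathbf H$: the matrix of the induced automorphism is block-triangular with a $q$ in the last diagonal entry, forcing $q\mid\det=\pm1$. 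You replace $\pi_1(\Gamma)$ by the much more accessible $Q=H_1(G)/H_1(G_{k-1})$ and obtain the same block-triangularity and the same determinant contradiction without constructing $\Gamma$ at all; in the orientable case this works and, interestingly, never invokes the yellow-loop hypothesis. You also correctly isolate the one place where ``$\sigma$ comes from an automorphism'' matters — $\det\bar\phi=\pm1$ — which is exactly the role played in the paper by $\phi_{ab}$ on $\mathbf H^{ab}$.

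The gap is the non-orientable case, and I don't think the ``orientation double cover'' repairs it as written. When $\sigma$ is not orientable, $\mathcal E_\bullet=\mathcal E$, so $u$ contains every red edge in both orientations, and your class $v\in Q\hookrightarrow H_1(G,G_{k-1})$ is the \emph{signed} red content of $u$; this can perfectly well be the zero vector (each unoriented red edge appearing equally often with both signs), in which case $\bar\phi(v)=qv$ carries no information. That degeneracy is precisely what the paper's detour through $\Gamma$ is designed to avoid: there, the period loop $\hat\gamma$ is a free generator of $\pi_1(\Gamma)$ by \autoref{res: gamma hat primitive}, so its abelianised class is nonzero \emph{by construction}, irrespective of orientation. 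Moreover, it is not clear what ``the orientation double cover of $G$'' should be: orientability of $\sigma$ is a combinatorial property of the red substitution, not a homotopy-theoretic property of the pair $(f,G)$, and there is no obvious index-two subgroup of $\pi_1(G)$ which is $\phi$-invariant and whose associated cover makes the lifted red substitution orientable. (This is also in tension with your remark that the yellow-loop hypothesis would have to be ``transported'' — your orientable-case argument never used it, so it is unclear what you would need it for in a cover.) In short: orientable case, correct and cleaner; non-orientable case, a genuine missing idea that the paper's $\Gamma$-construction supplies and your sketch does not.
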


\paragraph{}This proof combines a dynamical argument ($\sigma$ is a primitive substitution) and a group theoretical one ($\phi$ is an automorphism of $\free r$).
Let us sketch first the main steps.
We assume that the proposition is false.
This means that if we restrict our attention to the red edges, the path $f_\#^\infty(e_\bullet)$ is periodic.
We construct from $G$ a colored graph $\Gamma$ on which $f_\#^\infty(e_\bullet)$ coils up.
More precisely its fundamental group $\mathbf H$ can be decomposed as a free product $\mathbf H =  \mathbf L*\langle h \rangle$, where $\mathbf L$ is generated by conjugates of yellow loops and $h$ is represented by a loop $\hat \gamma$ with the following property.
If we collapse all the yellow edges of $\Gamma$ we obtain a simple (red) loop which is exactly the image of $\hat \gamma$ by the same operation.
Moreover, this red loop is the period of the red word associated to $f_\#^\infty(e_\bullet)$ (see \autoref{fig: graph Gamma}).
We show that the RTT $f$ induces a homotopy equivalence $\hat f \colon \Gamma \rightarrow \Gamma$ that catches two conflicting features of $\Phi$:
\begin{enumerate}
	\item Since the stratum $H$ is exponential, $\hat f$ should increase the length of the red word associated to $\hat \gamma$ (see \autoref{res: lifting f in Gamma}).
	\item The yellow components of $G$ are invariant under $f$. It follows that the automorphism of $\mathbf H$ induced by $\hat f$ sends $h$ to $gh^{\pm1}$, where $g$ belongs to the normal subgroup generated by $\mathbf L$.
\end{enumerate}
The key fact is that these two properties can be observed in the abelianization of $\mathbf H$ which leads to a contradiction.

\begin{proof}[Proof of \autoref{res: sigma(e) not shift periodic}]
Assume that $\sigma^\infty(e_\bullet)$ is shift-periodic.
Recall that, as a substitution of $\mathcal E_\bullet^*$, $\sigma$ is primitive.
\autoref{res: no big power in the orbit of the substitution} implies that there exists an integer $q \geq 2$ and a primitive word $u=e_1e_2\dots e_\ell$ of $\mathcal E_\bullet^*$ such that $\sigma^\infty(e_\bullet)=u^\infty$ and $\sigma(u)=u^q$.
Notice that $e_1=e_\bullet$.
This means, in particular, that the infinite path $f_\#^\infty(e_\bullet)$ is obtained as a concatenation 
\begin{equation*}
	f_\#^\infty(e_\bullet)=\gamma_0\cdot\gamma_1\cdot \gamma_2 \cdot\ldots\cdot\gamma_k\cdot\ldots,
\end{equation*}
of loops 
\begin{equation*}
	\gamma_k= e_1\cdot \alpha_{k\ell+1}\cdot e_2\cdot\alpha_{k\ell+2}\cdot\ldots\cdot  e_\ell\cdot\alpha_{(k+1)\ell},
\end{equation*}
where the $\alpha_i$ are (possibly trivial) yellow paths.
Moreover, if $\alpha_i$ is non trivial, then it is a maximal yellow subpath of the image by power of $f_\#$ of a red edge.
By assumption, none of them is a loop.
Recall that $x_0$ is the initial point of $e_1=e_\bullet$.
For every $i \in \intvald 1\ell$, $y_i$ and $x_i$ respectively stand for the initial and the terminal points of $\alpha_i$.
In particular, $x_0=x_\ell$.
We now focus on the path $\gamma=\gamma_0$:
\begin{equation*}
	\gamma=e_1\cdot \alpha_1\cdot e_2\cdot \alpha_2 \cdot\ldots\cdot  e_\ell\cdot \alpha_\ell.
\end{equation*}

\begin{lemm}
\label{res: f(gamma) prefix f-infty(e)}
	The path $f_\#(\gamma)$ is exactly $\gamma_0\dots \gamma_{q-1}$. 
	In particular, it is an initial subpath of the infinite path $f_\#^\infty(e_\bullet)$.
\end{lemm}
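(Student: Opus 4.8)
The statement to prove is \autoref{res: f(gamma) prefix f-infty(e)}: that $f_\#(\gamma) = \gamma_0 \gamma_1 \cdots \gamma_{q-1}$. The plan is to unwind both sides in terms of the red letters and the yellow subpaths, using the fact that $f(e_\bullet) = e_\bullet\cdot\alpha$ is a splitting together with the relation $\sigma(u) = u^q$ coming from \autoref{res: no big power in the orbit of the substitution}~(\ref{enu: no big power in the orbit of the substitution - shift-periodic}).

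First I would record that $\gamma$ is a red-legal path (it is an initial subpath of $f_\#^\infty(e_\bullet)$, which is red-legal since each $f_\#^p(e_\bullet)$ is), so by \autoref{lem:k-legal} its decomposition into maximal red and yellow subpaths is a splitting; hence $f_\#(\gamma) = f_\#(e_1)\,f_\#(\alpha_1)\,f_\#(e_2)\,f_\#(\alpha_2)\cdots f_\#(e_\ell)\,f_\#(\alpha_\ell)$, and the red word of $f_\#(\gamma)$ is $\sigma(e_1)\sigma(e_2)\cdots\sigma(e_\ell) = \sigma(u) = u^q$ by \autoref{res: substitution and RTT coincide on the red} (or directly from the definition of $\sigma$). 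On the other side, $\gamma_0\cdots\gamma_{q-1}$ is by construction the initial subpath of $f_\#^\infty(e_\bullet)$ whose red word is exactly $u^q$ (it is the concatenation of the first $q\ell$ red letters, which are $(e_1\cdots e_\ell)^q = u^q$, together with the intervening yellow subpaths $\alpha_1,\dots,\alpha_{q\ell}$). So both $f_\#(\gamma)$ and $\gamma_0\cdots\gamma_{q-1}$ are reduced paths with the same red word $u^q$; it remains to check that the yellow subpaths also match up and that the endpoints agree.

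For the endpoint claim: $\gamma$ is a loop based at $x_0$ (indeed $x_0 = x_\ell$ and $e_1$ starts at $x_0=y_1$... more precisely $\gamma$ runs from the initial point of $e_1$ to the terminal point of $\alpha_\ell$, which is $x_\ell = x_0$), so since $f$ fixes $x_0$, $f_\#(\gamma)$ is again a loop at $x_0$; and $\gamma_0\cdots\gamma_{q-1}$ is an initial subpath of $f_\#^\infty(e_\bullet)$, which starts at $x_0$, ending after $q$ full periods, hence also a loop at $x_0$. For the yellow subpaths: since $f(e_\bullet) = e_\bullet\cdot\alpha$ is a splitting, iterating gives $f_\#^\infty(e_\bullet) = e_\bullet\cdot\alpha\cdot f_\#(\alpha)\cdots$, and comparing with the period decomposition, the yellow subpath occurring in $f_\#^\infty(e_\bullet)$ right after the $j$-th red letter is a fixed path $\alpha_j$ determined by the orbit; on the $f_\#(\gamma)$ side the yellow subpath after the $j$-th red letter of $u^q$ is likewise read off from $f_\#(e_i)$ and $f_\#(\alpha_i)$, and one checks these are the same because $f_\#(\gamma)$ is itself an initial subpath of $f_\#^{p+1}_\#(e_\bullet)$ for $p$ large (as $\gamma$ is an initial subpath of $f_\#^p(e_\bullet)$), and initial subpaths of a fixed reduced infinite path are determined by their length.

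I expect the main obstacle to be the bookkeeping in the last paragraph: making precise that the yellow subpaths appearing in $f_\#(\gamma)$ are literally the $\alpha_1,\dots,\alpha_{q\ell}$ and not merely yellow paths with the right combinatorial position. The clean way to handle this is to avoid matching yellow subpaths one at a time and instead argue globally: $\gamma$ is an initial subpath of $f_\#^p(e_\bullet)$ for every sufficiently large $p$ (since $f_\#^p(e_\bullet)$ converges to $f_\#^\infty(e_\bullet)$ and $\gamma$ is an initial segment of the limit), hence $f_\#(\gamma)$ is an initial subpath of $f_\#^{p+1}(e_\bullet)$, hence of $f_\#^\infty(e_\bullet)$; and an initial subpath of the reduced infinite path $f_\#^\infty(e_\bullet)$ is uniquely determined by its red word, which we computed to be $u^q$; but $\gamma_0\cdots\gamma_{q-1}$ is by definition exactly the initial subpath with red word $u^q$. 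This identifies the two paths without ever isolating an individual $\alpha_i$.
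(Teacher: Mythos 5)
The ``clean way'' you propose at the end does not close the gap you correctly identify in the preceding paragraph. The claim that \emph{an initial subpath of $f_\#^\infty(e_\bullet)$ is uniquely determined by its red word} is false: knowing that the red word is $u^q$ only tells you that the path ends somewhere between the terminal vertex of the $(q\ell)$-th red edge and the terminal vertex of $\alpha_{q\ell}$. Any proper initial subpath $\alpha'$ of $\alpha_{q\ell}$ gives an initial subpath $\gamma_0\cdots\gamma_{q-2}\,e_1\alpha_{(q-1)\ell+1}\cdots e_\ell\alpha'$ of $f_\#^\infty(e_\bullet)$ with red word $u^q$ that is \emph{not} $\gamma_0\cdots\gamma_{q-1}$. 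Your earlier endpoint observation (both paths end at $x_0$) does not pin this down either, since a priori the yellow path $\alpha_{q\ell}$ could revisit $x_0$ in its interior. The same defect is present in the ``messy'' version: there you invoke ``determined by their length,'' which is true, but you never verify the lengths agree.

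The paper closes exactly this gap by working with the longer subpath $\gamma_0\gamma_1$ instead of $\gamma_0$ alone. Since $\gamma_0\cdot\gamma_1$ is a splitting inside $f_\#^p(e_\bullet)$, one gets that $f_\#(\gamma_0)\cdot f_\#(\gamma_1)$ is an initial subpath of $f_\#^{p+1}(e_\bullet)$; then the decisive observation is that $f_\#(\gamma_1)$ \emph{begins with the red edge $e_\bullet$}. This forces the trailing yellow piece $\alpha'$ of $f_\#(\gamma_0)$ to be the entire maximal yellow subpath $\alpha_{q\ell}$, since what follows $f_\#(\gamma_0)$ in $f_\#^\infty(e_\bullet)$ is a red edge. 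Without this (or some equally explicit control of the trailing yellow segment) the identification of $f_\#(\gamma)$ with $\gamma_0\cdots\gamma_{q-1}$ does not follow from the red-word computation alone.
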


\begin{proof}
	By construction, there exists $p\in \N$ such that $\gamma_0\cdot\gamma_1$ is a proper initial subpath of $f_\#^p(e_\bullet)$.
	Moreover, the terminal point of $\gamma_0\gamma_1$, which is also the initial vertex of the red edge $e_1=e_\bullet$, is splitting point of the yellow-red splitting of $f_\#^p(e_\bullet)$.
	Thus $f_\#(\gamma_0\gamma_1) = f_\#(\gamma_0)  \cdot f_\#(\gamma_1)$ is an initial subpath of $f_\#^{p+1}(e_\bullet)$, hence of $f_\#^\infty(e_\bullet)$.
	However, by \autoref{res: substitution and RTT coincide on the red}
	\begin{equation*}
		\Red{f_\#(\gamma_0)} = \sigma(\Red{\gamma_0}) =  \sigma(u)  = u^q = \Red{ \gamma_0\dots \gamma_{q-1}}.
	\end{equation*}
	It follows that there exists a subpath $\alpha'$ of $\alpha_{q\ell}$ such that
	\begin{equation*}
		f_\#(\gamma_0) = \left[\gamma_0\dots \gamma_{q-2}\right]\left[e_1 \alpha_{(q-1)\ell+1} e_2\alpha_{(q-1)\ell+2}\ldots  e_\ell\alpha'\right].
	\end{equation*}
	On the other hand, $\gamma_1$ and thus $f_\#(\gamma_1)$ starts with the red edge $e_1=e_\bullet$.
	Since $f_\#(\gamma_0)  \cdot f_\#(\gamma_1)$ is an initial subpath of $f_\#^{p+1}(e_\bullet)$, the path $\alpha'$ is necessarily the whole $\alpha_{q\ell}$.
	Consequently $f_\#(\gamma) = \gamma_0\dots \gamma_{q-1}$. 
\end{proof}

\paragraph{The graph $\Gamma$ and the loop $\hat{\gamma}$.}

Let $i \in \intvald 1\ell$.
We define $\hat G_i$ to be a copy of the largest connected yellow subgraph of $G$ containing $y_i$.
We denote by $\hat \alpha_i$ (\resp $\hat y_i$ and $\hat x_i$) the path $\alpha_i$ (\resp the vertices $y_i$ and $x_i$) viewed as a path of $\hat G_i$ (\resp as vertices of $\hat G_i$).

\paragraph{}
We now construct a graph $\Gamma$ as follows.
We start with the disjoint union of the $\hat G_i$ for $i \in \intvald 1\ell$.
Then, for every $i \in \intvald 1\ell$, we add an oriented edge $\hat e_i$ whose initial and terminal points are respectively $\hat x_{i-1}$ and $\hat y_i$.
The reverse edge $\hat e_i^{-1}$ is attached accordingly.
In this process we think about the indices $i$ as elements of $\Z/\ell\Z$.
In particular, $\hat x_0$ should be understood as the point $\hat x_\ell$ of $\hat G_\ell$.
We denote by $\Gamma$ the graph obtained in this way (see \autoref{fig: graph Gamma}).
\begin{figure}[h]
	\centering
	\includegraphics[width=0.9\textwidth]{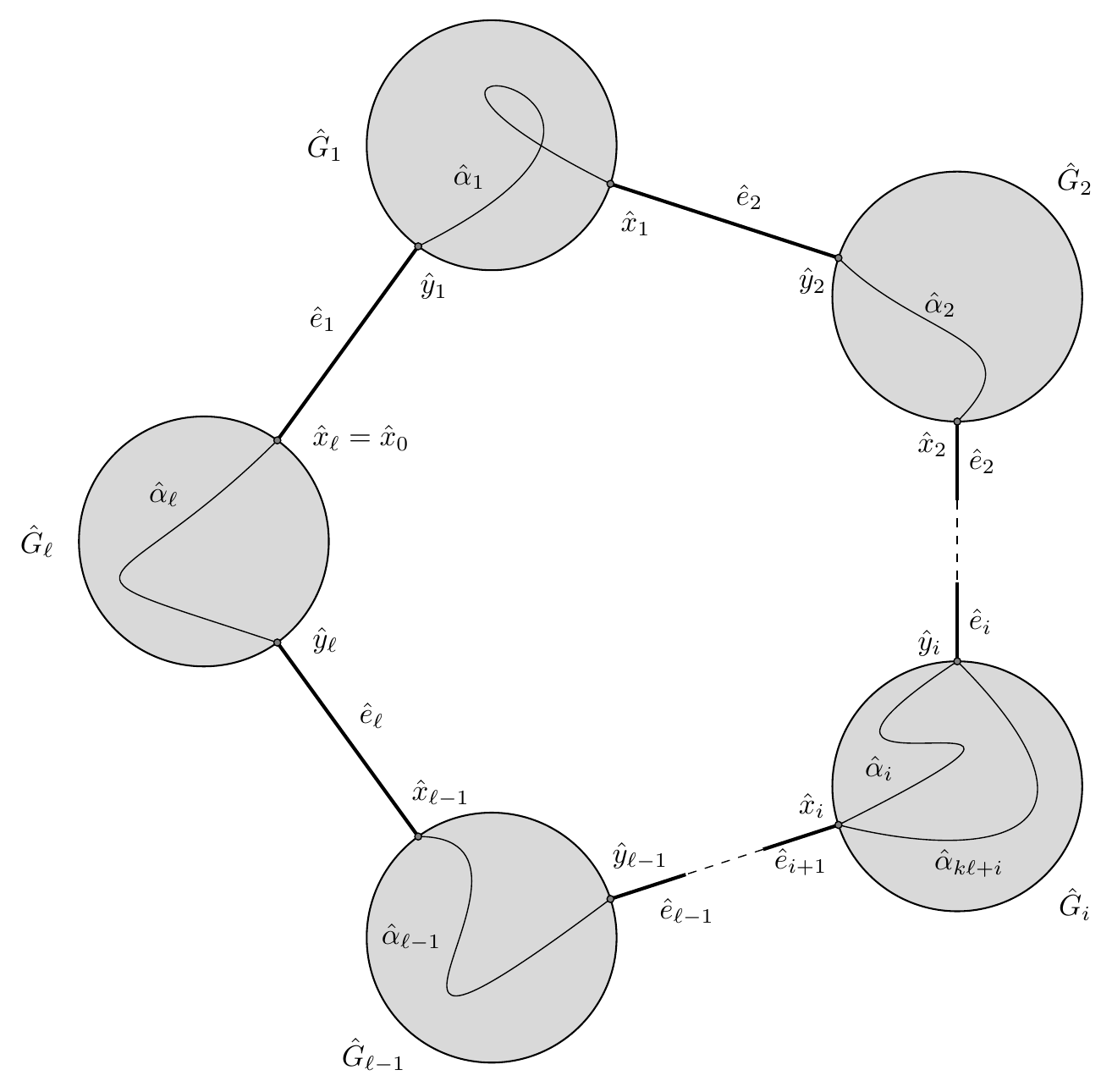}
	\caption{The graph $\Gamma$}
	\label{fig: graph Gamma}
\end{figure}%
Let $\rho$ be the graph morphism $\rho : \Gamma \rightarrow G$ such that for every $i \in \intvald 1\ell$, $\rho(\hat e_i) = e_i$ and the restriction of $\rho$ to $\hat G_i$ is the natural embedding $\hat G_i \hookrightarrow G$.
We color the edges of $\Gamma$ by the color of their images under $\rho$.
In other words, the edges $\hat e_i$ are red, whereas the edges of the subgraphs $\hat G_i$ are yellow.
By construction, the loop $\hat \gamma$ defined below is a lift of $\gamma$ in $\Gamma$.
\begin{equation*}
	\hat \gamma=\hat e_1 \hat \alpha_1 \hat e_2 \hat \alpha_2 \dots  \hat e_\ell \hat \alpha_\ell.
\end{equation*}

\paragraph{The subgroup $\mathbf H$.}
We denote by $\mathbf H$ the fundamental group $\pi_1(\Gamma,\hat x_0)$.
Let us choose a maximal tree $T_i$ in each $\hat G_i$.
The union $T$ defined below is a maximal tree of $\Gamma$. 
\begin{equation*}
	T = \left( \bigcup_{i=1}^\ell T_i\right) \cup\left(\bigcup_{i=1}^{\ell-1}\hat e_i\right).
\end{equation*}
For every edge $e$ of $\Gamma$ not in $T$, we write $\beta_e$ for the path contained in $T$ starting at $\hat x_0$ and ending at the initial vertex of $e$.
We define $h_e$ as the element of $\mathbf H$ represented by  $\beta_e e\beta_{e^{-1}}^{-1}$.
Let $i \in \intvald 1\ell$.
For each unoriented edge of $\hat G_i \setminus T_i$, we chose one of the two corresponding oriented edges.
We denote then by $\prefF_i$ the preferred set of oriented edges obtained in this way. 
We write $\prefF$ for the union 
\begin{equation*}
	\prefF= \bigcup_{i=1}^\ell \prefF_i.
\end{equation*}

\begin{lemm}
\label{res: gamma hat primitive}
	Let $h$ be the element of $\mathbf H$ represented by $\hat \gamma$.
	The family $\mathcal B$ obtained by taking the union of $(h_e)_{e \in \prefF}$ and $\{h\}$ is a free basis of $\mathbf H$.
\end{lemm}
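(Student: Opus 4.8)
The plan is to combine the standard description of the fundamental group of a graph relative to a spanning tree with a single elementary change of basis. First I would record that, since $T$ is a spanning tree of $\Gamma$, the group $\mathbf H=\pi_1(\Gamma,\hat x_0)$ is free, with a free basis consisting of the elements $h_e$ where $e$ ranges over a chosen orientation of each unoriented edge of $\Gamma$ not lying in $T$. From the very definition $T=(\bigcup_i T_i)\cup(\bigcup_{i=1}^{\ell-1}\hat e_i)$, those non-tree edges are exactly the edges of $\prefF=\bigcup_i\prefF_i$ together with the single red edge $\hat e_\ell$. Hence $\mathcal B_0:=(h_e)_{e\in\prefF}\cup\{h_{\hat e_\ell}\}$ is a free basis of $\mathbf H$, and it suffices to show that replacing $h_{\hat e_\ell}$ by $h$ still yields a basis.

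Next I would express $h=[\hat\gamma]$ in the basis $\mathcal B_0$, where $\hat\gamma=\hat e_1\hat\alpha_1\hat e_2\hat\alpha_2\cdots\hat e_\ell\hat\alpha_\ell$. The key bookkeeping point is that $\hat\gamma$ crosses the non-tree edge $\hat e_\ell$ exactly once and with its given orientation: each $\hat\alpha_i$ is a yellow path contained in $\hat G_i$ and so crosses no red edge, while among the red edges appearing in $\hat\gamma$ only $\hat e_\ell$ lies outside $T$ (the edges $\hat e_1,\dots,\hat e_{\ell-1}$ are in $T$ and contribute trivially to the class of a loop). Reading the class of $\hat\gamma$ off relative to $T$ — equivalently, collapsing $T$ to a point and using the induced isomorphism onto $\pi_1$ of the resulting wedge of circles — then gives $h=g_1\,h_{\hat e_\ell}^{\,\varepsilon}\,g_2$ with $\varepsilon=\pm1$ (in fact $\varepsilon=+1$ for the natural orientation), where $g_1$ is the word read off from $\hat\alpha_1\cdots\hat\alpha_{\ell-1}$ and $g_2$ the word read off from $\hat\alpha_\ell$; in particular $g_1$ and $g_2$ lie in the free subgroup $\langle h_e:e\in\prefF\rangle$.

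Finally I would conclude by a Nielsen-type change of basis. Assume first $\varepsilon=+1$. The endomorphism $\psi$ of $\mathbf H$ fixing every $h_e$ ($e\in\prefF$) and sending $h_{\hat e_\ell}$ to $g_1h_{\hat e_\ell}g_2=h$ is an automorphism: since $g_1$ and $g_2$ are fixed by $\psi$, the assignment $h_e\mapsto h_e$, $h_{\hat e_\ell}\mapsto g_1^{-1}h_{\hat e_\ell}g_2^{-1}$ defines a two-sided inverse. An automorphism carries a free basis to a free basis, so $\psi(\mathcal B_0)=(h_e)_{e\in\prefF}\cup\{h\}=\mathcal B$ is a free basis of $\mathbf H$; the case $\varepsilon=-1$ follows by applying this to $h^{-1}=g_2^{-1}h_{\hat e_\ell}g_1^{-1}$ and then inverting that last generator. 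I do not expect a genuine difficulty here: the only point needing care is the middle step — verifying that $\hat e_\ell$ is traversed exactly once (immediate from the yellow–red coloring together with $\hat e_1,\dots,\hat e_{\ell-1}\in T$) and that the leftover factors $g_1,g_2$ involve only edges of $\prefF$; everything else is the textbook graph-of-groups picture plus an elementary transvection.
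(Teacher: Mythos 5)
Your proof is correct and follows essentially the same route as the paper: identify $(h_e)_{e\in\prefF}\cup\{h_{\hat e_\ell}\}$ as the basis associated to the spanning tree $T$, observe that $\hat\gamma$ traverses the single non-tree red edge $\hat e_\ell$ exactly once while the $\hat\alpha_i$ contribute only generators from $\prefF$, and conclude by an elementary Nielsen change of basis. Your version is if anything slightly more careful than the paper's one-line computation $h = g\,h_{\hat e_\ell}$, since you allow for non-tree yellow edges appearing in $\hat\alpha_\ell$ after $\hat e_\ell$ and handle the resulting two-sided form $g_1 h_{\hat e_\ell} g_2$ explicitly.
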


\begin{proof}
	It follows from the definition of $\prefF$ that the family $(h_e)_{e \in \prefF \cup \{\hat e_\ell\}}$ is a free basis of $\mathbf H$.
	By construction of $\Gamma$,  $h = g \cdot h_{\hat e_\ell}$ where $g$ is a product of some $h_e$ with $e \in \prefF \cup\prefF^{-1}$.
	This implies that the family $(h_e)_{e \in \prefF}$ together with $h$ forms a free basis of $\mathbf H$.
\end{proof}

	Let $k \in \N$ and $\in i \in \intvald 1\ell$.
	The path $\alpha_{k\ell+i}$ and $\alpha_i$ have the same endpoints, namely $y_i$  (the terminal point of $e_i$) and $x_i$ (the initial point of $e_{i+1}$).
	In particular, they are contained in the same maximal yellow connected component of $G$.
	We denote by $\hat \alpha_{k\ell+i}$ the copy in $\hat G_i$ of $\alpha_{k\ell+i}$ (See \autoref{fig: graph Gamma}).
	We put 
	\begin{equation*}
		\hat \gamma_k= \hat e_1 \hat \alpha_{k\ell+1}\hat e_2\hat \alpha_{k\ell+2}\dots \hat e_\ell\hat \alpha_{(k+1)\ell}.
	\end{equation*}
	By construction $\hat \gamma_k$ is a loop of $\Gamma$ based at $\hat x_0$ lifting $\gamma_k$ (i.e. $\rho \circ \hat \gamma_k = \gamma_k$).
	
\begin{lemm}
\label{res: hat gamma k in terms of hat gamma}
	Let $h$ be the element of $\mathbf H$ represented by $\hat \gamma$.
	Let $k \in \N$. 
	There exists $g$ in the normal subgroup generated by $(h_e)_{e \in \prefF}$ such that the element of $\mathbf H$ represented by the loop $\hat \gamma_k$ is $gh$.
\end{lemm}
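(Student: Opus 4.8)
The plan is to produce a homomorphism $q \colon \mathbf H \to \Z$ whose kernel is exactly the normal subgroup generated by $(h_e)_{e \in \prefF}$ and which takes the same value on the loops $\hat\gamma$ and $\hat\gamma_k$. The desired identity $[\hat\gamma_k] = gh$ with $g$ in that normal subgroup then drops out immediately by taking $g = [\hat\gamma_k]h^{-1}$.

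To build $q$, I would use \autoref{res: gamma hat primitive}: since $\mathcal B = (h_e)_{e\in\prefF}\cup\{h\}$ is a free basis of $\mathbf H$, there is a well-defined homomorphism $q\colon\mathbf H\to\Z$ with $q(h_e)=0$ for all $e\in\prefF$ and $q(h)=1$. Killing the normal closure of part of a free basis of a free group leaves the free group on the remaining basis elements, so $\mathbf H$ modulo the normal subgroup generated by $(h_e)_{e\in\prefF}$ is infinite cyclic, generated by the image of $h$; comparing with $q$ shows that $\ker q$ is precisely the normal subgroup generated by $(h_e)_{e\in\prefF}$. (Geometrically, $q$ is the map on $\pi_1$ induced by collapsing every yellow edge of $\Gamma$, which turns $\Gamma$ into a circle subdivided into the $\ell$ red edges $\hat e_1,\dots,\hat e_\ell$.)

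I would then describe $q$ in a way that lets me evaluate it on loops read off directly from $\Gamma$: for a loop $\alpha$ based at $\hat x_0$, $q([\alpha])$ is the algebraic number of crossings of the non-tree red edge $\hat e_\ell$ (with $\hat e_\ell$ counted $+1$ and $\hat e_\ell^{-1}$ counted $-1$). This count is visibly a homomorphism $\mathbf H\to\Z$ (it factors through $H_1(\Gamma;\Z)$ followed by the projection to the $\hat e_\ell$-coordinate relative to the basis of non-tree edges), so it is enough to check it agrees with $q$ on $\mathcal B$: the loop $\beta_e e\beta_{e^{-1}}^{-1}$ representing $h_e$ uses only edges of the tree $T$ together with the yellow edge $e$, none of which is $\hat e_\ell^{\pm1}$, so its count is $0$; and $\hat\gamma = \hat e_1\hat\alpha_1\cdots\hat e_\ell\hat\alpha_\ell$ crosses $\hat e_\ell$ exactly once, positively, since each $\hat\alpha_i$ lies in the yellow subgraph $\hat G_i$ and so crosses no red edge.

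Finally I would apply this to $\hat\gamma_k = \hat e_1\hat\alpha_{k\ell+1}\hat e_2\hat\alpha_{k\ell+2}\cdots\hat e_\ell\hat\alpha_{(k+1)\ell}$. Each $\hat\alpha_{k\ell+i}$ is again a path inside the yellow graph $\hat G_i$, so by the same reasoning $\hat\gamma_k$ crosses $\hat e_\ell$ exactly once, positively, whence $q([\hat\gamma_k]) = 1 = q(h)$. Therefore $q\big([\hat\gamma_k]h^{-1}\big)=0$, so $g := [\hat\gamma_k]h^{-1}$ lies in the normal subgroup generated by $(h_e)_{e\in\prefF}$ and $[\hat\gamma_k]=gh$. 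I do not expect a genuine obstacle here: the whole point is to choose the right homomorphism $\mathbf H\to\Z$, and the only steps needing a sentence of care are the identification of $\ker q$ (which is exactly where \autoref{res: gamma hat primitive} is used) and the elementary but essential observation that, by construction of $\Gamma$, the subpaths $\hat\alpha_j$ never meet the red edges.
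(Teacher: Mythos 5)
Your proof is correct, but it takes a genuinely different route from the paper. The paper's proof is a single explicit algebraic identity: it writes $\hat\gamma_k$ as a concatenation of conjugates of the yellow loops $\hat\alpha_{k\ell+i}\hat\alpha_i^{-1}$ (each lying in some $\hat G_i$) followed by $\hat\gamma$, so that $g$ is exhibited by hand as a product of conjugates of elements coming from the yellow subgraphs, which lie in the normal subgroup generated by $(h_e)_{e\in\prefF}$. That argument is entirely constructive and does not even invoke \autoref{res: gamma hat primitive}. You instead build the ``winding number around $\hat e_\ell$'' homomorphism $q\colon\mathbf H\to\Z$, use \autoref{res: gamma hat primitive} to identify $\ker q$ with the normal subgroup in question, and observe that $\hat\gamma$ and $\hat\gamma_k$ both cross $\hat e_\ell$ exactly once positively. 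This is cleaner and shorter, and it is closely aligned with the way the lemma is actually used downstream in the proof of \autoref{res: sigma(e) not shift periodic} (where everything is read off from the abelianization of $\mathbf H$); the trade-off is that it is not constructive and leans on the free-basis lemma, whereas the paper's version hands you the element $g$ explicitly. Both approaches are sound; yours is arguably more transparent for the reader who wants to see immediately why only the $\hat e_\ell$-coordinate matters.
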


\begin{proof}
	It just follows from the following equality
	\begin{align*}
		\hat \gamma_k  = &
		\left[ \hat e_1\left(\hat \alpha_{k\ell+1}\hat \alpha^{-1}_1\right)\hat e^{-1}_1\right]  
		\left[ \hat e_1 \hat \alpha_1\hat e_2\left(\hat \alpha_{k\ell+2}\hat \alpha^{-1}_2\right)\hat e^{-1}_2\hat \alpha^{-1}_1\hat e^{-1}_1\right] \dots \\
		&\left[ \hat e_1 \hat \alpha_1\hat e_2\dots \hat e_{\ell-1}\left(\hat \alpha_{(k+1)\ell-1}\hat \alpha^{-1}_{\ell-1}\right) \hat e^{-1}_{\ell-1} \dots \hat e^{-1}_2\hat \alpha^{-1}_1\hat e^{-1}_1\right]  \\
		&\left[\hat \gamma \left( \hat \alpha_\ell^{-1}\hat \alpha_{(k+1)\ell}\right) \hat \gamma^{-1}\right]\hat \gamma\qedhere
	\end{align*}
\end{proof}

\begin{lemm}
\label{res: p loc injective}
	The map $\rho : \Gamma \rightarrow G$ is locally injective.
\end{lemm}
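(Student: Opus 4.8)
The plan is to check local injectivity vertex by vertex, since $\rho$ is clearly injective in the interior of every edge (each edge of $\Gamma$ maps homeomorphically onto an edge of $G$). Local injectivity at a point $\hat v$ of $\Gamma$ amounts to saying that no two distinct oriented edges of $\Gamma$ issuing from $\hat v$ have the same image under $D\rho$, i.e. $\rho$ does not fold any turn at $\hat v$. There are three kinds of vertices to treat: interior vertices of the yellow subgraphs $\hat G_i$, the vertices $\hat y_i$ where the red edge $\hat e_i$ attaches to $\hat G_i$, and the vertices $\hat x_{i-1}$ where the red edge $\hat e_i$ attaches to $\hat G_{i-1}$ (these are the same vertices, reading indices cyclically).

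First I would dispose of the interior vertices of a fixed $\hat G_i$: the restriction of $\rho$ to $\hat G_i$ is, by construction, the natural embedding $\hat G_i \hookrightarrow G$, so it is injective there, and in particular locally injective at every interior vertex. Next, at a vertex of the form $\hat x_{i-1} = \hat y_i$ (an attaching vertex), the oriented edges of $\Gamma$ issuing from it are: the red edge $\hat e_i$ (mapping to $e_i$), possibly the reverse red edge $\hat e_{i-1}^{-1}$ (mapping to $e_{i-1}^{-1}$), and the yellow edges of $\hat G_{i-1}$ (respectively $\hat G_i$) issuing from that vertex. The yellow edges map to yellow edges of $G$ and the red edges map to red edges of $G$, so a yellow edge and a red edge can never have the same $D\rho$-image. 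It remains to rule out two collisions: (a) two yellow edges of $\hat G_i$ at $\hat y_i$ colliding — impossible since $\rho$ embeds $\hat G_i$; (b) the two red edges $\hat e_i$ and $\hat e_{i-1}^{-1}$ at $\hat x_{i-1}$ mapping respectively to $e_i$ and $e_{i-1}^{-1}$ having the same image, i.e. $e_i = e_{i-1}^{-1}$. This last point is where I expect the real content to lie.

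To handle collision (b) I would use that $\hat\gamma$ is a lift of $\gamma = e_1\cdot\alpha_1\cdot e_2\cdot\alpha_2\cdots e_\ell\cdot\alpha_\ell$ and that $\gamma$ is a (reduced) path in $G$ obtained from the yellow-red splitting of an iterate $f_\#^p(e_\bullet)$; more precisely $\gamma$ is the period loop, so $\gamma\cdot\gamma$ is again a path of edges, hence locally injective in $G$. Thus at the splitting vertex $x_{i-1} = y_i$ of $\gamma$ the turn $(e_{i-1}^{-1}, e_i)$ (when $\alpha_{i-1}$ and $\alpha_i$ are both trivial, so that the two red edges are consecutive along $\gamma$ or along $\gamma\cdot\gamma$) is nondegenerate, i.e. $e_i \neq e_{i-1}^{-1}$; and when the yellow paths are nontrivial, the collision simply cannot occur because then only one of $\hat e_i$, $\hat e_{i-1}^{-1}$ is incident to the vertex in question from the red side, the rest being yellow. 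One small subtlety: I should also check the turn $(\hat e_i^{-1}, \hat\alpha_i)$ and $(\hat\alpha_{i-1}^{-1}, \hat e_i)$ at $\hat y_i$ and $\hat x_{i-1}$ — but since $\gamma$ is a genuine path in $G$ these turns are nondegenerate in $G$, so their $\rho$-images are, and local injectivity of $\rho$ at those vertices follows. The main obstacle is therefore bookkeeping: carefully enumerating which oriented edges are incident to each attaching vertex of $\Gamma$, using the cyclic index convention $\hat x_0 = \hat x_\ell$, and invoking at each the corresponding nondegenerate turn of the path $\gamma$ (or of $\gamma\cdot\gamma$) in $G$. Once that enumeration is laid out, each case is immediate.
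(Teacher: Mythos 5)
Your case analysis (interior yellow vertices, yellow--yellow collisions, red--red collisions) is the right skeleton and matches the paper's, but the treatment of the red--red case has a genuine gap. You claim that ``when the yellow paths are nontrivial, the collision simply cannot occur because then only one of $\hat e_i$, $\hat e_{i-1}^{-1}$ is incident to the vertex in question from the red side.'' This is false when $\alpha_{i-1}$ is a \emph{nontrivial loop} in $G$: in that case $\hat\alpha_{i-1}$ is a loop in $\hat G_{i-1}$, so $\hat y_{i-1} = \hat x_{i-1}$, and both $\hat e_{i-1}^{-1}$ and $\hat e_i$ issue from the same vertex of $\Gamma$. Moreover, a path of the form $e_{i-1}\,\alpha_{i-1}\,e_i$ with $e_i = e_{i-1}^{-1}$ and $\alpha_{i-1}$ a nontrivial loop is perfectly reduced, so the local injectivity of $\gamma$ (or of $\gamma\cdot\gamma$) does not help here; $\rho$ really would fold those two red edges.

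This is exactly the point of the standing hypothesis of \autoref{res: reduction - one exponential stratum}, arranged in \autoref{sec: finite index subgroup} by passing to the finite-index subgroup $\mathbf L$: for every red edge $e$ and every $p$, no maximal yellow subpath of $f^p_\#(e)$ is a loop. The $\alpha_i$ are such maximal yellow subpaths, so they cannot be nontrivial loops. Your proof never invokes this hypothesis, and without it the statement is simply not true; this is the one piece of ``real content'' you flagged but did not supply. The paper's proof observes that if the two red edges share an initial vertex, then $\alpha_i$ has equal endpoints, hence is either trivial or a loop; the loop case is excluded by the hypothesis, and the trivial case forces $e_{i+1} = e_i^{-1}$, contradicting that $\gamma$ is a reduced path. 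Once you insert the appeal to the no-loop hypothesis, your argument closes up and agrees with the paper's.
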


\begin{proof}
	We proof this lemma by contradiction.
	Let $\hat e$ and $\hat e'$ be two distinct edges of $\Gamma$ with the same initial vertex $\hat v$.
	Suppose that $\rho(\hat e) = \rho(\hat e')$.
	There exists $i \in \intvald 1\ell$ such that $\hat v$ is a vertex of $\hat G_i$.
	By construction $\rho$ preserves the color of the edges, thus $\hat e$ and $\hat e'$ necessarily have the same color.
	We distinguish two cases.
	Assume first that $\hat e$ and $\hat e'$ are both yellow edges.
	Recall that the restriction of $\rho$ to $\hat G_i$ is the inclusion $\hat G_i \hookrightarrow G$. 
	Thus $\hat e = \hat e'$.
	Contradiction.
	Assume now that $\hat e$ and $\hat e'$ are red.
	By construction of $\Gamma$, at most two red edges have an initial vertex in $\hat G_i$.
	Without loss of generality we can assume that $\hat e^{-1} = \hat e_i$ and $\hat e' = \hat e_{i+1}$ (as previously, if $i=\ell$ then $\hat e_{i+1}$ correspond to $\hat e_1$).  
	Then $\rho(\hat v)$ is the terminal  vertex $y_i$ of $e_i$ and the initial vertex $x_i$ of $e_{i+1}$.
	Thus the yellow path $\alpha_i$ is either trivial or a loop of $G$.
	By assumption, it cannot be a loop, thus $\alpha_i$ is trivial and $e_{i+1} = \rho(\hat e') = \rho(\hat e) = e_i^{-1}$.
	It contradicts the fact that $\gamma$ is a path.
	Consequently $\rho$ is locally injective.
\end{proof}

If follows from the lemma that $\rho$ induces an embedding $\rho_*$ from $\mathbf H$ into $\pi_1(G,x_0)$.
From now on, we identify $\mathbf H$ with its image in $\pi_1(G,x_0)$.

\paragraph{The automorphism induced on $\mathbf{H}$.}
Recall that $\phi$ is the automorphism of $\pi_1(G, x_0)$ in the outer class $\Phi$ induced by $f$.
We now prove that $\phi$ induces an automorphism of $\mathbf H$.
To that end we lift the RTT $f$ into a map $\hat f : \Gamma \rightarrow \Gamma$.

\begin{lemm}
\label{res: lifting f in Gamma}
	There exists a continuous map $\hat f : \Gamma \rightarrow \Gamma$ satisfying the followings.
	\begin{enumerate}
		\item $f \circ \rho = \rho \circ \hat f$
		\item $\hat f(\hat \gamma)$ is homotopic relatively to its endpoints to $\hat \gamma_0 \dots \hat \gamma_{q-1}$.
	\end{enumerate}
\end{lemm}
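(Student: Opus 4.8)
The plan is to build $\hat f$ from its restrictions to the yellow subgraphs $\hat G_i$ and to the red edges $\hat e_i$, steered by the identity $f_\#(\gamma)=\gamma_0\cdots\gamma_{q-1}$ of \autoref{res: f(gamma) prefix f-infty(e)}. First I would record how $f$ moves the yellow components. The subgraph of $G$ spanned by the yellow edges is the union of the strata strictly below the top stratum $H$, hence $f$-invariant, so $f$ carries each maximal connected yellow subgraph of $G$ into another one; write $Y_i=\rho(\hat G_i)$ for the yellow component through $y_i$ and $x_i$. Since $\gamma=\gamma_0$ is an initial segment of $f_\#^p(e_\bullet)$ for $p$ large, it is red-legal, and by \autoref{lem:k-legal} its yellow-red decomposition is a splitting, so
\[
	f_\#(\gamma_0)=f_\#(e_1)\cdot f_\#(\alpha_1)\cdot f_\#(e_2)\cdots f_\#(e_\ell)\cdot f_\#(\alpha_\ell)=\gamma_0\cdots\gamma_{q-1}.
\]
Comparing red words — $\Red{f_\#(e_i)}=\sigma(e_i)$ whereas $\Red{\gamma_0\cdots\gamma_{q-1}}=\sigma(u)=u^q$ — shows that $\sigma(e_i)$ labels one block of consecutive red edges of $\gamma_0\cdots\gamma_{q-1}$. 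Reading off the vertices of $G$ at which $f_\#(e_i)$ starts and ends (each lying on one of the yellow paths $\alpha_m$ occurring in $\gamma_0\cdots\gamma_{q-1}$, or being one of the $x_m$, $y_m$) then yields an index $j(i)\in\{1,\dots,\ell\}$ with $f(Y_i)\subseteq Y_{j(i)}$, $f(y_i)=y_{j(i)}$ and $f(x_i)=x_{j(i)}$; in particular $j(\ell)=\ell$ because $f$ fixes $x_0=x_\ell$.

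Next I would define $\hat f$ on $\hat G_i$ as $\hat f|_{\hat G_i}=(\rho|_{\hat G_{j(i)}})^{-1}\circ f\circ\rho|_{\hat G_i}$: this is legitimate because $\rho|_{\hat G_{j(i)}}$ is a homeomorphism onto $Y_{j(i)}$ and $f(Y_i)\subseteq Y_{j(i)}$, it is continuous, it obeys $\rho\circ\hat f=f\circ\rho$ on $\hat G_i$, and it sends $\hat y_i\mapsto\hat y_{j(i)}$ and $\hat x_i\mapsto\hat x_{j(i)}$, so in particular $\hat f(\hat x_0)=\hat x_0$. On the red edge $\hat e_i$ I would let $\hat f(\hat e_i)$ be the subpath of the loop $\hat\gamma_0\cdots\hat\gamma_{q-1}$ of $\Gamma$ lying over the occurrence of $f_\#(e_i)$ inside $\gamma_0\cdots\gamma_{q-1}=\rho(\hat\gamma_0\cdots\hat\gamma_{q-1})$. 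By the index computation above this subpath runs from $\hat x_{j(i-1)}=\hat f(\hat x_{i-1})$ to $\hat y_{j(i)}=\hat f(\hat y_i)$, so the two prescriptions agree at the common vertices $\hat x_{i-1}\in\hat G_{i-1}$ and $\hat y_i\in\hat G_i$ and assemble into a continuous map $\hat f\colon\Gamma\to\Gamma$ with $\rho\circ\hat f=f\circ\rho$; this is assertion~(1).

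For assertion~(2), one observes that $\hat f(\hat\gamma)=\hat f(\hat e_1)\,\hat f(\hat\alpha_1)\cdots\hat f(\hat e_\ell)\,\hat f(\hat\alpha_\ell)$ is a loop of $\Gamma$ based at $\hat f(\hat x_0)=\hat x_0$, just like $\hat\gamma_0\cdots\hat\gamma_{q-1}$. Their images under $\rho$ are $f(\gamma)$ and $\gamma_0\cdots\gamma_{q-1}$, which are homotopic relative to the endpoints in $G$ because $[f(\gamma)]=f_\#(\gamma)=\gamma_0\cdots\gamma_{q-1}$ by \autoref{res: f(gamma) prefix f-infty(e)}. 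Since $\rho_*\colon\pi_1(\Gamma,\hat x_0)\to\pi_1(G,x_0)$ is injective (\autoref{res: p loc injective}), two loops of $\Gamma$ based at $\hat x_0$ whose $\rho$-images are homotopic rel endpoints are themselves homotopic rel endpoints; hence $\hat f(\hat\gamma)$ is homotopic rel endpoints to $\hat\gamma_0\cdots\hat\gamma_{q-1}$.

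The main obstacle will be the combinatorial bookkeeping behind the index function $j$: one must verify that \emph{all} maximal yellow subpaths of the yellow-red splitting of $f_\#(e_i)$ — the interior ones, and the trailing one which fuses with $f_\#(\alpha_i)$ to form a maximal yellow subpath of $\gamma_0\cdots\gamma_{q-1}$ — are genuine subpaths of the $\alpha_m$'s of $\gamma_0\cdots\gamma_{q-1}$, and that the red edges of $f_\#(e_i)$ are read off, in order, as red edges of $\gamma_0\cdots\gamma_{q-1}$. This is exactly what makes $j$ well defined, forces $f_\#(e_i)$ to lift to a path of $\Gamma$ with the predicted endpoints, and makes the two definitions of $\hat f$ compatible. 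Everything after that is formal.
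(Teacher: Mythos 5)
Your construction is sound and captures the same underlying mechanism as the paper, but you organize it globally where the paper proceeds inductively. The paper builds $\hat f$ on the increasing subgraphs $\Gamma_i = \hat e_1\cup\hat G_1\cup\cdots\cup\hat e_i\cup\hat G_i$, defining at each step a lift $\hat\beta_i$ of $f_\#(e_1\alpha_1\cdots e_i\alpha_i)$ starting at $\hat x_0$ and showing $\hat f_i(\hat e_1\hat\alpha_1\cdots\hat e_i\hat\alpha_i)\simeq\hat\beta_i$ rel endpoints; item (2) then falls out at $i=\ell$ since $\hat\beta_\ell=\hat\gamma_0\cdots\hat\gamma_{q-1}$. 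You instead fix upfront a global index function $j$, define $\hat f$ on all yellow copies via $(\rho|_{\hat G_{j(i)}})^{-1}\circ f\circ\rho|_{\hat G_i}$ and on red edges as subpaths of the explicit lift $\hat\gamma_0\cdots\hat\gamma_{q-1}$, and then obtain (2) as a one-line consequence of (1), the basepoint computation $\hat f(\hat x_0)=\hat x_0$, and injectivity of $\rho_*$ (from \autoref{res: p loc injective}). That last step is a clean shortcut the paper does not take, and it is nice. The trade-off is that the inductive version automatically discharges the vertex-matching that your version must verify globally, namely that the lift of $f_\#(e_i)$ inside $\hat\gamma_0\cdots\hat\gamma_{q-1}$ really does run from $\hat x_{j(i-1)}$ to $\hat y_{j(i)}$ — which is exactly the ``bookkeeping'' you flag in your last paragraph but do not carry out. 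Two small cautions there: first, $j(i)$ cannot be characterized merely by $f(Y_i)\subseteq Y_{j(i)}$, since distinct indices can label the same yellow component of $G$; you need the positional definition (position mod $\ell$ of the last red letter of the block $\sigma(e_i)$ inside $u^q$), which you gesture at but should make explicit. Second, the claim ``$j(\ell)=\ell$ because $f$ fixes $x_0=x_\ell$'' is not quite a proof, for the same reason — the correct argument is that $|\sigma(e_1)\cdots\sigma(e_\ell)|=q\ell$ is a multiple of $\ell$, so the last red letter of $f_\#(\gamma)$ sits at a position $\equiv 0\pmod\ell$.
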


\begin{proof}
	For every $i \in \intvald 1\ell$, we denote by $\Gamma_i$ the subgraph of $\Gamma$ containing the red edges $\hat e_1, \dots, \hat e_i$ and the yellow graphs $\hat G_1, \dots, \hat G_i$.
	In particular, $\Gamma_\ell$ is the whole graph $\Gamma$.
	By convention, we put $\Gamma_0 = \{ \hat x_0 \}$.	
	Let $i \in \intvald 1\ell$.
	The path $\gamma$ can be split as follows
	\begin{equation*}
		\gamma = (e_1\alpha_1\dots e_i \alpha_i)\cdot (e_{i+1}\alpha_{i+1} \dots e_\ell \alpha_\ell).
	\end{equation*}
	Therefore $f_\#(e_1\alpha_1\dots e_i \alpha_i)$ is an initial subpath of $f_\#(\gamma) = \gamma_0 \dots \gamma_{q-1}$ (see \autoref{res: f(gamma) prefix f-infty(e)}).
	However the path $\hat \gamma_0 \dots \hat \gamma_{q-1}$ is by construction a lift in $\Gamma$ of $\gamma_0 \dots \gamma_{q-1}$.
	Thus there exists a path $\hat \beta_i$ in $\Gamma$ starting at $\hat x_0$ such that $\rho \circ \hat \beta_i = f_\#(e_1\alpha_1\dots e_i \alpha_i)$.
	In particular $\hat \beta_\ell = \hat \gamma_0 \dots \hat \gamma_{q-1}$.
	In addition, we define $\hat \beta_0$ to be the trivial path equal to $\hat x_0$.
	
	\paragraph{}
	We claim that for every $i \in \intvald 0\ell$, there exists a continuous map $\hat f_i : \Gamma_i \rightarrow \Gamma$ satisfying the following
	\begin{enumerate}
		\item $f \circ \rho = \rho \circ \hat f_i$,
		\item $\hat f_i(\hat e_1\hat \alpha_1\dots \hat e_i \hat \alpha_i)$ is homotopic relatively to its endpoints to $\hat \beta_i$.
	\end{enumerate}
	We proceed by induction on $i$.
	By assumption $f$ fixes the vertex $x_0$. 
	We put $\hat f_0 (\hat x_0) = \hat x_0$, hence the claim holds for $i=0$.
	Assume now that the claim holds for $i \in \intvald 0{\ell-1}$.
	Our goal is to extend $\hat f_i$ into a map $\hat f_{i+1} : \Gamma_{i+1} \rightarrow \Gamma$.
	We start with the following observation.
	By construction the path $\gamma$ splits as follows
	\begin{equation*}
		\gamma = (e_1\alpha_1 \dots e_i \alpha_i)\cdot e_{i+1} \cdot \alpha_{i+1} \cdot (e_{i+2}\alpha_{i+2}\dots e_\ell\alpha_\ell).
	\end{equation*}
	Therefore we have 
	\begin{equation*}
		f_\#(\gamma) = f_\#(e_1\alpha_1 \dots e_i \alpha_i)\cdot f(e_{i+1}) \cdot f_\#(\alpha_{i+1}) \cdot f_\#(e_{i+2}\alpha_{i+2}\dots e_\ell\alpha_\ell).
	\end{equation*}
	In particular $f(e_{i+1})$ is a subpath of $f_\#(\gamma)$.
	The path $\hat e_1 \hat \alpha_1 \dots \hat e_i \hat \alpha_i$ is contained in $\Gamma_i$.
	According to the induction hypothesis, $\hat f_i(\hat e_1 \hat \alpha_1 \dots \hat e_i \hat \alpha_i)$ is a lift of $f(e_1\alpha_1 \dots e_i \alpha_i)$ which is homotopic relatively to its endpoints to $\hat \beta_i$.
	In particular $\hat f_i(\hat x_i$) is the endpoint of $\hat \beta_i$ and a preimage in $\Gamma$ of $f(x_i)$.
	Recall that $\hat \beta_i$ is the lift in $\Gamma$ of $f_\#(e_1\alpha_1 \dots e_i \alpha_i)$.
	Therefore we can define $\hat f_{i+1}(\hat e_{i+1})$ to be the path of $\Gamma$ starting at $\hat f_i(\hat x_i)$ that lifts $f(e_{i+1})$.
	Let us now focus on $\hat G_{i+1}$.
	Since $e_{i+1}$ is a red edge, its image under $f$ starts and ends by a red edge.
	In particular, there exists $j \in \intvald 1\ell$ such that the last edge of $f(e_{i+1})$ is $e_j$. 
	On the other hand $f$ is continuous and sends yellow edges to yellow paths.
	Therefore it maps the largest yellow connected component of $G$ containing $y_{i+1}$ to the largest yellow connected component of $G$ containing the endpoint of $f(e_{i+1})$, i.e. $y_j$.
	It provides a map from $\hat G_{i+1}$ to $\hat G_j$, which completes the definition of $\hat f_{i+1}$.
		
	\paragraph{}
	It remains to prove that $\hat f_{i+1}(\hat e_1\hat \alpha_1\dots \hat e_{i+1} \hat \alpha_{i+1})$ is homotopic relatively to its endpoints to $\hat \beta_{i+1}$.
	By construction 
	\begin{equation*}
		\hat f_{i+1}(\hat e_1\hat \alpha_1\dots \hat e_{i+1} \hat \alpha_{i+1}) 
		= \hat f_i(\hat e_1 \hat \alpha_1 \dots \hat e_i \hat \alpha_i)\hat f_{i+1}(\hat e_1)\hat f_{i+1}(\hat \alpha_{i+1}) 
	\end{equation*}
	In particular it is homotopic relatively to its endpoints to $\hat \beta_i\hat f_{i+1}(\hat e_1)\hat f_{i+1}(\hat \alpha_{i+1})$.
	However, $\hat \beta_i \hat f(\hat e_i)$ is the lift starting at $\hat x_0$ of $f_\#(e_1\alpha_1 \dots e_i \alpha_i e_{i+1})$.
	Thus it is sufficient to prove that $\hat f_{i+1}(\hat \alpha_{i+1})$ is homotopic relatively to its endpoints to the lift starting at $\hat f_{i+1}(\hat y_{i+1})$ of $f_\#(\alpha_{i+1})$.
	However, these last paths all belong to $\hat G_j$ and the restriction of $\rho$ to $\hat G_j$ is the natural embedding $\hat G_j \hookrightarrow G$.
	The conclusion follows then from the fact that $f(\alpha_{i+1})$ and $f_\#(\alpha_{i+1})$ are homotopic relatively to their endpoints in the yellow connected component of $G$ they belong to.
	Thus the claim holds for $i+1$
	
	\paragraph{}
	Note that if $i=\ell$ our construction provides two definitions for $\hat f_\ell(\hat x_0)$.
	On the one hand $\hat f_0(\hat x_0) = \hat x_0$.
	On the other hand $\hat x_0 = \hat x_\ell$ is the endpoint of $\hat \alpha_\ell$ that belongs to $\hat G_l$.
	However $\hat f_\ell(\hat \gamma)$ and $\hat \gamma_0 \dots \hat \gamma_{q-1}$ are homotopic relatively to their endpoints.
	Thus the endpoints of $\hat \gamma$ (which is $\hat x_0$) is sent to the endpoint of $\hat \gamma_{q-1}$, i.e. $\hat x_0$.
	Hence $\hat f_\ell$ is well defined.
	We conclude the proof by taking $\hat f = \hat f_\ell$.	
\end{proof}

\begin{lemm}
\label{res: subgroup H invariant under power of phi}
	The map $\phi$ induces an automorphism of $\mathbf H$.
\end{lemm}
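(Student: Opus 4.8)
The plan is to push the analysis of $\hat f$ down to fundamental groups. \autoref{res: lifting f in Gamma} gives a continuous map $\hat f\colon\Gamma\to\Gamma$ with $f\circ\rho=\rho\circ\hat f$, and the last paragraph of its proof checks that $\hat f$ fixes the base point $\hat x_0$; hence $\hat f$ induces an endomorphism $\hat f_*$ of $\mathbf H=\pi_1(\Gamma,\hat x_0)$. Since $f$ fixes $x_0$ and represents $\phi$ on $\pi_1(G,x_0)$, and since by \autoref{res: p loc injective} the map $\rho_*\colon\mathbf H\to\pi_1(G,x_0)$ is the inclusion we fixed after that lemma, the identity $f\circ\rho=\rho\circ\hat f$ yields $\phi\circ\rho_*=\rho_*\circ\hat f_*$. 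Under the identification of $\mathbf H$ with its image in $\pi_1(G,x_0)$ this reads $\phi|_{\mathbf H}=\hat f_*$; in particular $\phi(\mathbf H)\subseteq\mathbf H$, and $\phi|_{\mathbf H}$ is injective because $\phi$ is an automorphism of $\free r$.

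What is left — and this is the heart of the matter — is surjectivity, i.e. that $\hat f$ is a homotopy equivalence of $\Gamma$. I would argue this by producing preimages of a generating set of $\mathbf H$. The yellow generators $h_e$, $e\in\prefF$, are handled by the yellow structure: by construction $\hat f$ carries each yellow block $\hat G_i$ into some $\hat G_j$ and, read through the embeddings $\rho|_{\hat G_i}$, this is exactly the map induced by $f$ between the corresponding maximal yellow connected components of $G$; since the yellow part $G_{k-1}$ of $G$ (with $k$ the height of $H$, the top stratum) is $f$-invariant and $f$ restricts on it to a homotopy equivalence, and since the $\hat G_i$ are glued into $\Gamma$ along the red edges in a controlled cyclic pattern, one recovers every $h_e$ in $\hat f_*(\mathbf H)$. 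For the remaining generator $h=[\hat\gamma]$ one invokes the second conclusion of \autoref{res: lifting f in Gamma}: $\hat f(\hat\gamma)$ is homotopic rel endpoints to $\hat\gamma_0\cdots\hat\gamma_{q-1}$, which crosses every red edge of $\Gamma$, and by \autoref{res: hat gamma k in terms of hat gamma} each factor $[\hat\gamma_k]$ equals $h$ times an element of the normal closure of the yellow generators; combining with the previous step puts $h$ in $\hat f_*(\mathbf H)$ as well. Hence $\hat f_*$ is onto, and $\phi|_{\mathbf H}=\hat f_*$ is an automorphism of $\mathbf H$.

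An alternative for the surjectivity step, closer in spirit to \autoref{sec: finite index subgroup}, would be to start from $\phi(\mathbf H)\subseteq\mathbf H$ and invoke the LERF property of $\free r$ (Scott's lemma, Lemma~6.0.6 of \cite{BesFeiHan00}) to upgrade the inclusion to an equality. Either way, the step I expect to be the main obstacle is precisely this surjectivity: one must pin down how $\hat f$ redistributes the red edges $\hat e_i$ throughout $\Gamma$ and check that nothing is lost when reading back in $\mathbf H$, and this is exactly where it is essential that $f$ be a genuine homotopy equivalence of $G$ and that $\rho$ be an immersion rather than merely $\pi_1$-injective.
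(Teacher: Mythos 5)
Your ``alternative'' is, word for word, the paper's proof: establish $\phi(\mathbf H)\subseteq\mathbf H$ from \autoref{res: lifting f in Gamma} (via the commutation $f\circ\rho=\rho\circ\hat f$ and the fact that $\hat f$ fixes $\hat x_0$, exactly as you spell out), and then upgrade the inclusion to an equality by Scott's LERF lemma (Lemma~6.0.6 of \cite{BesFeiHan00}), since $\phi$ is an automorphism of $\free r$ and $\mathbf H$ is finitely generated. So you do have the intended argument.

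Your \emph{primary} route, producing preimages of generators to show $\hat f_*$ is onto, is considerably shakier, and I would not accept it as written. The assertion that ``$f$ restricts on $G_{k-1}$ to a homotopy equivalence'' is not something that comes for free from $f$ being a homotopy equivalence of $G$ and $G_{k-1}$ being $f$-invariant; in general the restriction of a homotopy equivalence to an invariant subgraph need not be a homotopy equivalence (for instance zero strata in $G_{k-1}$ get pushed into $G_{j-1}$), and the paper makes no such claim. Even granting that, ``since the $\hat G_i$ are glued into $\Gamma$ along the red edges in a controlled cyclic pattern, one recovers every $h_e$'' is not an argument: the yellow blocks $\hat G_i$ are \emph{copies} of yellow components of $G$, several $\hat G_i$ may sit over the same component, and $\hat f$ permutes/shifts them in a way you have not pinned down. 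Tracking which $h_e$'s land where would require essentially reproving a combinatorial statement the paper deliberately avoids. The virtue of the LERF route is precisely that it needs only the inclusion $\phi(\mathbf H)\subseteq\mathbf H$, which \autoref{res: lifting f in Gamma} already delivers, plus finite generation of $\mathbf H$; nothing about $\hat f$ being a homotopy equivalence of $\Gamma$ is required. Keep the second paragraph, drop the first.
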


\begin{proof}
	It follows from \autoref{res: lifting f in Gamma} that $\phi(\mathbf H)$ is a subgroup of $\mathbf H$.
	As a consequence of the LERF property, it implies that the restriction of $\phi$ to $\mathbf H$ is an automorphism -- see for instance Lemma~6.0.6 in \cite{BesFeiHan00}.
\end{proof}

\paragraph{The abelianization of $\mathbf H$.}

	We now completes the proof of \autoref{res: sigma(e) not shift periodic}.
	Let $d$ be the rank of the free group $\mathbf H$.
	We consider the the abelianization morphism $\mathbf H\rightarrow \Z^d$.
	In particular $\phi$ induces an automorphism $\phi_{ab}$ of $\Z^d$.
	We denote by $\mathcal C$ the image in $\Z^d$ of the free basis $\mathcal B$ of $\mathbf H$ given by \autoref{res: gamma hat primitive}.
	The first $(d-1)$ elements of $\mathcal B$ (the ones corresponding to oriented edges in $\prefF$) are conjugates of a yellow loops of $\Gamma$. 
	However $f$ and thus $\hat f$ maps yellow edges to yellow edges.
	Hence the subgroup $\Z^{d-1}$ generated by the first $(d-1)$ elements of $\mathcal C$ is invariant under $\phi_{ab}$.
	By \autoref{res: lifting f in Gamma}, $\hat f (\hat \gamma)$ is homotopic relatively to $\{\hat x_0\}$ to $\hat \gamma_0 \hat \gamma_1 \cdots \hat \gamma_{q-1}$.
	It follows from \autoref{res: hat gamma k in terms of hat gamma} that the matrix $R$ of $\phi_{ab}$ in the basis $\mathcal C$ has the following shape:
	\begin{equation*}
		R=\left(
		\begin{array}{ccc|c}
		\star & \dots & \star & \star \\
		\vdots & & \vdots & \vdots \\
		\star & \dots & \star & \star \\
		\hline
		0 & \dots & 0 & q
		\end{array}
		\right).
	\end{equation*}
	Since $q\geq 2$, the determinant of $R$ cannot be invertible in $\Z$, which contradicts the fact that  $\phi_{ab}$ is an automorphism.
	We have thus proved that $\sigma^\infty(e_\bullet)$ is not shift-periodic.
\end{proof}

\begin{prop}
\label{res: no power in the red}
	There exists an integer $m$ such that for every $p \in \N$, as a word over $\mathcal E_\bullet$, $\Red{f^p_\#(e_\bullet)}$ does not contains an $m$-th power.
\end{prop}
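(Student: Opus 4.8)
The plan is to translate the statement about the train-track map into one about the induced substitution and then quote the results of \autoref{sec:primitive}. Since $H$ is an exponential stratum, the edge $e_\bullet \in \mathcal E$ is $k$-legal, hence red-legal, so \autoref{res: substitution and RTT coincide on the red} applies with $\alpha = e_\bullet$. As $\Red{e_\bullet}$ is just the one-letter word $e_\bullet$, we obtain for every $p \in \N$
\begin{equation*}
	\Red{f^p_\#(e_\bullet)} = \sigma^p\!\left(\Red{e_\bullet}\right) = \sigma^p(e_\bullet).
\end{equation*}
Thus it is enough to produce an integer $m$ such that, as a word over $\mathcal E_\bullet$, no $\sigma^p(e_\bullet)$ contains an $m$-th power.

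Next I would invoke \autoref{res: no big power in the orbit of the substitution}, applied to the letter $e_\bullet$ and to the substitution $\sigma \colon \mathcal E_\bullet^* \rightarrow \mathcal E_\bullet^*$, which is primitive by the discussion following the definition of the induced substitution. The normalization $Df(e_\bullet) = e_\bullet$ together with the aperiodicity of $H$ ensures, as recorded just before \autoref{res: sigma(e) not shift periodic}, that $e_\bullet$ is a prefix of $\sigma(e_\bullet)$, so the hypotheses of \autoref{res: no big power in the orbit of the substitution} are satisfied. Point~(\ref{enu: no big power in the orbit of the substitution - converge}) yields the convergence of $(\sigma^p(e_\bullet))$ to $\sigma^\infty(e_\bullet)$, and the extra hypothesis required to apply Point~(\ref{enu: no big power in the orbit of the substitution - not shift-preiodic}) is exactly that $\sigma^\infty(e_\bullet)$ is not shift-periodic, which is the content of \autoref{res: sigma(e) not shift periodic}. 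Hence there is an integer $m \geq 2$ such that for all $p \in \N$ the word $\sigma^p(e_\bullet)$ contains no $m$-th power; combined with the displayed identity this is precisely the proposition.

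The only thing to be careful about is the bookkeeping of alphabets: an $m$-th power here is understood over $\mathcal E_\bullet$ (equivalently over $\mathcal E$, since $\mathcal E_\bullet \subseteq \mathcal E$), and one should note explicitly that $\Red{e_\bullet}$ is the trivial-length-one word — both points are immediate. There is no real obstacle left at this stage: the entire difficulty of the argument is absorbed into \autoref{res: sigma(e) not shift periodic} (the exclusion of a shift-periodic infinite red word), which rests on the fact that $\sigma$ comes from an automorphism of $\free r$; once that is in hand, the present proof is a short assembly of \autoref{res: substitution and RTT coincide on the red}, \autoref{res: no big power in the orbit of the substitution} and \autoref{res: sigma(e) not shift periodic}.
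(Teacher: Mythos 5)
Your proposal is correct and follows exactly the paper's own proof: identify $\Red{f^p_\#(e_\bullet)}$ with $\sigma^p(e_\bullet)$ via \autoref{res: substitution and RTT coincide on the red}, then feed the primitivity of $\sigma$ on $\mathcal E_\bullet$ and the non-shift-periodicity of $\sigma^\infty(e_\bullet)$ (from \autoref{res: sigma(e) not shift periodic}) into \autoref{res: no big power in the orbit of the substitution}(\ref{enu: no big power in the orbit of the substitution - not shift-preiodic}). The only difference is that you spell out the hypothesis checks (red-legality of $e_\bullet$, prefix condition) a bit more explicitly than the paper does.
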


\begin{proof}
	According to \autoref{res: substitution and RTT coincide on the red}, for every $p \in \N$, the red word associated to $f^p_\#(e_\bullet)$ is exactly $\sigma^p(e_\bullet)$.
	However the substitution $\sigma$ is primitive and the infinite word $\sigma^\infty(e_\bullet)$ is not shift-periodic.
	Hence the result follows from \autoref{res: no big power in the orbit of the substitution}.
\end{proof}

\section{The automorphism of $\burn rn$ induced by $\phi$.}
\label{sec:proof main}

\subsection{A criterium of non triviality in $\burn{r}{n}$}
\label{sec:criterium}

\paragraph{}
Let us have a pause in order to introduce a key tool for the sequel of the proof of \autoref{res: reduction - one exponential stratum}.

\paragraph{}
Let $(X,x_0)$ be a pointed simplicial tree.
Given two points $x$ and $x'$ of $X$, $\dist x{x'}$ denotes the distance between them whereas $\geo x{x'}$ stands for the geodesic joining $x$ and $x'$.
Let $g$ be an isometry of $X$.
Its \emph{translation length}, denoted by $\len g$, is the quantity $\len g = \inf_{x\in X} \dist {gx}x$.
If $X$ is the Cayley graph of $\free r$, then $\len g$ is exactly the length of the conjugacy class of $g \in \free r$.
The set of points $A_g = \left\{ x \in X \mid \dist {gx}x = \len g\right\}$ is called the \emph{axis} of $g$.
It is a subtree of $X$.
It is known that either $\len g = 0$ and $A_g$ is the set of fixed points of $g$ or $\len g > 0$ and $A_g$ is a bi-infinite geodesic on which $g$ acts by translation of length $\len g$.
In the first case $g$ is said to be \emph{elliptic}, in the second one \emph{hyperbolic}.
For  more details we refer the reader to \cite{CulMor87}.

\paragraph{}Let $n \in \N$.
We explain now a criterium to decide whether two elements of $\free r$ have the same image in $\burn rn$ or not.
To that end we assume that $\free r$ acts by isometries on $X$.

\begin{defi}
	Let $n \in \N$ and $\xi \in \R_+$.
	Let $y$ and $z$ be two points of $X$.
	We say that $z$ is the image of $y$ by an \emph{$(n,\xi)$-elementary move} (or simply \emph{elementary move}) if there is a hyperbolic element $u \in \free r$ such that
	\begin{enumerate}
		\item $\diam \left( \geo {x_0}y \cap A_u \right) \geq (n/2 -\xi )\len u$
		\item $z = u^{-n}y$.	
	\end{enumerate}
	The point $z$ is the image of $y$ by a \emph{sequence of $(n,\xi)$-elementary moves} if there is a finite sequence $y=y_0, y_1, \dots ,y_\ell=z$ such that for all $i \in \intvald 0{\ell-1}$, 
	$y_{i+1}$ is the image of $y_i$ by an $(n,\xi)$-elementary move.
\end{defi}

\begin{remark} 
	Roughly speaking an elementary move allows us to replace a subword of the form $v^m$ by $v^{m-n}$ provided $m$ is sufficiently large.
	Assume that $X$ is the Cayley graph of $\free r$ and $x_0$ the vertex of $X$ that corresponds to 1.
	Let $g$ be an element of $\free r$.
	The reduced word $w$ which represents $g$ labels the geodesic between $x_0$ and $gx_0$.
	Let us suppose now that $w$ can be written (as a reduced word) $w=pv^ms$ with $m \geq n/2-\xi$.
	It follows that 
	\begin{displaymath}
		\diam \left( \geo {x_0}{gx_0} \cap A_u \right) \geq \len {u^m} \geq (n/2-\xi) \len u,
	\end{displaymath}
	where $u$ is the element of $\free r$ represented by $pvp^{-1}$.
	Thus $u^{-n}g$ which is represented by $pv^{m-n}s$ is the image of $g$ by an elementary move.
	Later in the proof, the tree $X$ will be the universal cover of an RTT.
	Therefore this formulation, which extends the idea of substituting subwords, is more appropriate for our purpose.
\end{remark}

\begin{prop}[Coulon {\cite[Theorem 4.12]{Coulon:2012tj}}]
\label{res: criterium}
	Assume that $\free r$ acts properly co-compactly by isometries on $(X,x_0)$.
	There exist $n_1 \in \N$ and $\xi \in \R_+$ such that for every odd exponent $n \geq n_1$ the following holds.
	Two isometries $g,g' \in \free r$ have the same image in $\burn rn$ if and only if there exist two finite sequences of $(n,\xi)$-elementary moves which respectively send $gx_0$ and $g'x_0$ to the same point of $X$.
\end{prop}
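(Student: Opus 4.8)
The plan is to treat the two implications separately; the reverse one --- same image $\Rightarrow$ related by elementary moves --- carries all the difficulty, so I begin with the easy direction. Since $\free r$ is torsion-free and acts properly on the simplicial tree $X$, every vertex stabiliser is finite, hence trivial, and the action is free. Suppose $gx_0$ and $g'x_0$ are carried to a common point by sequences of $(n,\xi)$-elementary moves. Reading the moves off one at a time, each multiplies the current point on the left by some $u^{-n}$, so the common point equals both $hgx_0$ and $h'g'x_0$ for some $h,h'\in\free r^n$. Freeness of the action gives $hg=h'g'$, hence $g'=(h')^{-1}hg$; projecting to $\burn rn=\free r/\free r^n$ and using $h,h'\in\free r^n$ shows that $g$ and $g'$ have the same image there.

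For the reverse implication, assume $g^{-1}g'\in\free r^n$. Then $g^{-1}g'$ is read by the boundary of a van Kampen diagram over the presentation $\langle\free r\mid\{u^n: u\in\free r\ \text{primitive and cyclically reduced}\}\rangle$ of $\burn rn$; equivalently, working inside $X$, the geodesics $\geo{x_0}{gx_0}$ and $\geo{x_0}{g'x_0}$ bound a disc for the rotating family made of the axes $A_u$ together with the groups $\langle u^n\rangle$. I would fix such an object $D$ of minimal area: its $2$-cells are length-$n$ segments along the axes $A_u$, and if $D$ has no cell then $gx_0=g'x_0$ and the two empty sequences work. The crux is a Greendlinger-type structure lemma for $D$: because $n$ is odd and large, the relators $u^n$ form a rotating family whose injectivity radius is huge compared with the hyperbolicity constant of $X$ (here $0$), and the Novikov--Adian estimates --- in the geometric guise of the cone-off and small-cancellation machinery for such families used by the first author --- yield that any reduced non-trivial $D$ contains a $2$-cell $C$, lying along some axis $A_u$, whose intersection with the outer boundary of $D$ contains a subsegment $I$ with $\diam(I)\geq(n/2-\xi)\len u$, for a constant $\xi\in\R_+$ depending only on $X$. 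Peeling $C$ off amounts to replacing the current boundary endpoint $y$ by $u^{-n}y$ for a suitable orientation of $u$ --- precisely an $(n,\xi)$-elementary move --- and produces a diagram with one cell fewer. Inducting on the area: the peelings on the $\geo{x_0}{gx_0}$-side of the boundary assemble into a sequence of $(n,\xi)$-elementary moves starting at $gx_0$, those on the $\geo{x_0}{g'x_0}$-side into one starting at $g'x_0$, and when $D$ is exhausted both sequences end at the same point.

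The main obstacle is exactly this sharp Greendlinger lemma --- in particular the fact that $I$ accounts for at least \emph{half} the perimeter of $C$ up to the universally bounded correction $\xi\len u$, rather than merely a fixed fraction of it; this is the geometric heart of the Novikov--Adian theory, and I would import it from the Burnside-group machinery rather than reprove it. The remaining points --- reducedness and minimality of $D$, the induction on area, and the check that each peeling is a genuine elementary move --- are routine. Finally, specialising $X$ to the Cayley graph of $\free r$ recovers the word-theoretic statement recalled in \autoref{res: elementary moves}; conversely, the general case can be deduced from that special one via the Milnor--\v{S}varc quasi-isometry between $\free r$ and $X$, after enlarging $n_1$ and $\xi$, since translation lengths and axis--geodesic overlaps in the two trees agree up to bounded multiplicative and additive error.
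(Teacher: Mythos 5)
The paper itself does not prove this proposition---it is quoted verbatim from \cite[Theorem~4.12]{Coulon:2012tj}---and what follows the statement in the paper is a discussion rather than a proof. Measured against that discussion, your sketch heads in the right direction: both invoke Ol'shanski\u\i's van~Kampen diagram picture and a Greendlinger-type cell with a long boundary arc. But the paper is explicit that Ol'shanski\u\i's Lemma~5.5 only produces a cell meeting the outer boundary along one \emph{third} of its perimeter, which after the peeling induction yields the weaker move $pv^m s\to pv^{m-n}s$ with $m\geq n/3$; the sharp $n/2-\xi$ fraction needed here requires a genuine strengthening of that lemma. You import exactly this strengthening as a black box (``the Novikov--Adian estimates in the geometric guise of the cone-off machinery''), which is fine for a sketch, but it means the technical heart of the forward implication is being cited rather than proved, exactly as in the paper; your easy direction is correct and complete.

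Two further points in the sketch need repair. First, the peeling induction must cope with a 2-cell whose long boundary arc straddles the corner at $x_0$ where $\geo{x_0}{gx_0}$ and $\geo{x_0}{g'x_0}$ are glued; peeling such a cell is not a single $(n,\xi)$-elementary move on either side, and you give no argument that this configuration is avoided. This is precisely the asymmetry of elementary moves that the paper stresses when explaining that the proposition is strictly stronger than its single-element corollary. Second, the Milnor--\v{S}varc transfer from the Cayley graph (\autoref{res: elementary moves}) to a general tree is stated too loosely: a generic $(L,C)$-quasi-isometry only controls the ratio $\diam\bigl(\geo{x_0}{x}\cap A_u\bigr)/\len u$ up to a \emph{multiplicative} factor of order $L^2$, which cannot be absorbed into an additive constant $\xi$. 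What makes the transfer work is equivariance: an equivariant quasi-isometry approximately intertwines the two translation actions of $\langle u\rangle$ on the two axes, so the number of $u$-periods of overlap---not its raw length---is preserved up to a uniform additive error. That observation is the real content of the reduction and should be made explicit; as written, the claim that ``translation lengths and overlaps agree up to bounded multiplicative and additive error'' does not by itself give the statement.
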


This proposition provides in particular a criterion for detecting trivial elements in $\burn rn$.

\begin{coro}
\label{res: criterium - coro}
	Assume that $\free r$ acts properly co-compactly by isometries on $(X,x_0)$.
	There exist $n_1 \in \N$ and $\xi \in \R_+$ such that for every odd exponent $n \geq n_1$ the following holds.
	An element $g \in \free r$ is trivial in $\burn rn$ if and only if there exists a finite sequence of $(n,\xi)$-elementary moves which sends $gx_0$ to $x_0$.
\end{coro}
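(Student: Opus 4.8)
The plan is to derive this corollary directly from \autoref{res: criterium}, specialized to $g' = 1$. First I would note that $g \in \free r$ is trivial in $\burn rn$ if and only if $g$ and $1$ have the same image in $\burn rn$. Since $1 \cdot x_0 = x_0$, \autoref{res: criterium} (applied with $g' = 1$) tells us that, for $n \geq n_1$ odd, this happens if and only if there are two finite sequences of $(n,\xi)$-elementary moves, one sending $g x_0$ to some point $y \in X$ and the other sending $x_0$ to the very same $y$. The remaining point is to observe that the second sequence is forced to be trivial, so that $y = x_0$.

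Indeed, no $(n,\xi)$-elementary move can originate from $x_0$. Were $z$ the image of $x_0$ by such a move, there would exist a hyperbolic $u \in \free r$ with $\diam(\geo{x_0}{x_0} \cap A_u) \geq (n/2 - \xi)\len u$. But $\geo{x_0}{x_0}$ reduces to the point $x_0$, so the left-hand side equals $0$, whereas the right-hand side is strictly positive: $u$ is hyperbolic, hence $\len u > 0$, and $n/2 - \xi > 0$ as soon as $n > 2\xi$. Thus, after enlarging if necessary the integer $n_1$ coming from \autoref{res: criterium} so that $n_1 > 2\xi$, we conclude that for every odd $n \geq n_1$ the only finite sequence of $(n,\xi)$-elementary moves starting at $x_0$ is the empty one. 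Hence $y = x_0$, which gives: $g$ is trivial in $\burn rn$ if and only if some finite sequence of $(n,\xi)$-elementary moves sends $g x_0$ to $x_0$. For the converse I would argue the same way, pairing such a sequence with the empty sequence from $x_0$ to $x_0$: both reach $x_0$, so \autoref{res: criterium} yields that $g$ and $1$ have the same image in $\burn rn$.

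There is no real obstacle in this argument: the entire content of the reduction is the elementary remark that $\diam(\geo{x_0}{x_0}\cap A_u) = 0$ forbids any elementary move out of the basepoint, which is exactly what converts the symmetric ``common target'' formulation of \autoref{res: criterium} into the one-sided statement we want. In particular, no reversibility property of elementary moves is needed, and the only care to take is to choose $n_1$ large enough ($n_1 > 2\xi$) for the strict positivity of $(n/2-\xi)\len u$ to hold.
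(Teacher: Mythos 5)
Your proof is correct and takes exactly the route the paper intends: the corollary is stated without an explicit proof precisely because it follows from \autoref{res: criterium} applied to $g'=1$ once one notices that, since the diameter condition in the definition of an $(n,\xi)$-elementary move is evaluated along $\geo{x_0}{y}$, no move can originate from $x_0$ itself (for $n > 2\xi$), so the second sequence is trivial. Your observation that the asymmetry of elementary moves is what forces the second sequence to stay at $x_0$ is the same point the paper emphasizes in the discussion following the corollary.
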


However, despite the similarity with the word problem in a group, \autoref{res: criterium - coro} is not equivalent to \autoref{res: criterium}.
This comes from the fact that $(n,\xi)$-elementary moves are not symmetric.
One first has to see a large power along the geodesic $\geo {x_0}{gx_0}$ before performing an elementary move.
For instance, if $a$ and $b$ are two distinct primitive elements of $\free r$, there is no sequence of elementary moves that sends $a^n$ to $b^n$.
\autoref{res: criterium - coro} only implies a weaker form of \autoref{res: criterium} in the sense that we need to allow a larger class of elementary moves: those of the form $pv^ms \rightarrow pv^{m-n}s$ with $m \geq n/4 -\xi/2$.

\paragraph{}
In our situation, we will apply this criterion with two elements of the form $g$ and $\phi^p(g)$, where $g \in \free r$ and $\phi$ is the automorphism we want to study.
The theory of train-track provides many information about the path $\geo{x_0}{\phi^p(g)x_0}$.
Therefore it is also more natural to have a criterion that uses conditions on $\geo{x_0}{gx_0}$ and $\geo{x_0}{\phi^p(g)x_0}$ rather than $\geo{gx_0}{\phi^p(g)x_0}$.

\paragraph{}
\autoref{res: criterium} is ``well-known'' from the specialists of Burnside's groups. 
One can find in Adian's \cite{Adi79} or Ol'shanski\u\i's work \cite{Olc82} statements close to \autoref{res: criterium}.
However, to our knowledge it has never been stated in this simple way.
Let us explain the parallel with Ol'shanski\u\i's solution to the Burnside problem.
His proof is written in terms of van Kampen diagrams.
In particular, Lemma~5.5 of \cite{Olc82} says the following.
Let $w$ be a word over a basis of $\free r$.
For a sufficiently large exponent $n$ if $w$ induces a trivial element of $\burn rn$, then there exists a van Kampen diagram whose boundary is labelled by $w$.
Moreover this diagram contains a $2$-cell such that at least 1/3 of its boundary is a subpath of $w$.
Such a cell represents a relation of the form $v^n$.
Removing this $2$-cell corresponds to performing an elementary move.
A proof by induction on the number of $2$-cells would give a statement similar to \autoref{res: criterium - coro}.
One needs though to allow weaker elementary moves of the form $pv^ms \rightarrow pv^{m-n}s$ with $m \geq n/3$.
However a sharpening of Lemma~5.5 of \cite{Olc82} should provide the same estimates.

\paragraph{}
An other difference between the two approaches is the following.
A.Y.~Ol'shan\-ski\u\i\ works with words over a basis of $\free r$: this can be interpreted in terms of an action of $\free r$ on its Cayley graph.
In our context we have to  consider the action of $\free r$ on a tree $X$ which is no more its Cayley graph, but the universal cover of a train-track.
Therefore one could not apply directly Ol'shanksi\u\i's result. 
One would need to use before an equivariant quasi-isometry to transpose his statement in an arbitrary tree.

\subsection{Performing elementary moves in $\tilde G$}
\label{sec: tracking powers}

\paragraph{}
We get back to the proof of \autoref{res: reduction - one exponential stratum}.
The notations are the same as the ones of \autoref{sec:no big powers}.

\paragraph{Metrics on $\tilde{G}$.}

For our purpose, the pointed tree $(X,x_0)$ that appears in \autoref{res: criterium} will be the universal cover $(\tilde G, \tilde x_0)$ of $G$ where $\tilde x_0$ is preimage of $x_0$.
By declaring that any edge of $\tilde{G}$ is isometric to the unit real segment $[0,1]$ we obtain an $\free r$-invariant length metric on $\tilde{G}$: the \emph{combinatorial metric}. 
We denote by $|\alpha|$ the resulting \emph{combinatorial length} of a path $\alpha$ in $\tilde{G}$.

\paragraph{}
We also define a pseudo-length-metric on $\tilde{G}$ in the following way.
We first consider that any yellow edge has length zero.
Recall that $\mathcal E$ is the set of all the oriented red edges of $G$. 
We chose a preferred set of oriented edges $\vec{\mathcal E}$.
Recall that the transition matrix $M$ of the red stratum of $f$ is aperiodic.
We denote by $\lambda > 1$ the Perron-Frobenius dominant eigenvalue of $M$ and we consider a positive right eigenvector $l=(l_e)_{e\in \vec{\mathcal E}}$ associated to $\lambda$.
We declare the lifts of $e$ isometric to the real segment $[0,l_e]$.
The resulting pseudo-metric is called the \emph{PF-pseudo-metric}.
We denote by $|\alpha|_\text{PF}$ the resulting length of the path $\alpha$ in $\tilde{G}$: $|\alpha|_\text{PF}$ is called the \emph{PF-length} of $\alpha$.
This length only depends on the red word $\Red \alpha \in \mathcal E^*$.
If $\alpha$ is a red-legal path, we thus get that for all $p\in\N$, 
\begin{equation*}
	|f^p_\#(\alpha)|_\text{PF}=\lambda^p|\alpha|_\text{PF}.
\end{equation*}
Unless stated otherwise we will work with $\tilde G$ endowed with the combinatorial metric.

\paragraph{The element $g$ and its orbit.}
Recall that $e_\bullet$ is the red edge fixed at the beginning of \autoref{sec:no big powers}.
For all $p\in\N$, $f^p_\#(e_\bullet)$ is a path starting by $e_\bullet$.
Its yellow-red decomposition is a splitting.
The red stratum $H$ is aperiodic.
Thus if $p$ is a sufficiently large integer, one can find an other occurrence of $e_\bullet$ in $f^p_\#(e_\bullet)$:
$f^p_\#(e_\bullet)= e_\bullet \nu_0e_\bullet \nu_1$.
The path $\nu=e\nu_0$ is a red-legal circuit, and the yellow-red decomposition of $\nu$ is a splitting. 
We denote by $g$ the element of $\pi_1(G,x_0)$ represented by $\nu$.
By construction the geodesic $\geo {\tilde x_0}{g\tilde x_0}$ is the lift in $\tilde G$ of $\nu$ starting at $\tilde x_0$.

\begin{lemm}
\label{res; no power in the red - geometric statement}
	There exists an integer $n_2$ with the following property.
	Let $p \in \N$.
	Let $\beta$ be a path of $\tilde G$ such that the red words respectively associated to $\beta$ and $f^p_\#(\nu)$ are the same.
	For all $u \in \free r \setminus \{1\}$, if
	\begin{displaymath}
		\diam \left(\beta\cap A_u \right) > n_2 \len u,
	\end{displaymath}
	then the axis of $u$ only contains yellow edges.
\end{lemm}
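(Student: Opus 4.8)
The plan is to argue by contradiction, reducing everything to \autoref{res: no power in the red}.

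First I would record two elementary facts about the geometry. Since $\free r$ acts freely on $\tilde G$ (it is the deck transformation group of a covering), every $u\in\free r\setminus\{1\}$ is hyperbolic; so $A_u$ is a bi-infinite geodesic on which $u$ acts by a translation of length $L=\len u$, a positive integer, and $A_u$ is a union of fundamental domains $F_k$ ($k\in\Z$), each a subpath of $L$ consecutive edges with $u^k\cdot F_0=F_k$. As $u^k$ is a deck transformation it preserves edge labels, hence edge colors; therefore all the $F_k$ carry the same red word, and if $A_u$ contains a single red edge then so does each $F_k$. Now let $m\geq 2$ be the integer furnished by \autoref{res: no power in the red} and set $n_2=m+2$. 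A path $\beta$ of $\tilde G$ is a geodesic segment, so $\beta\cap A_u$ is a sub-segment of both $\beta$ and $A_u$.

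Assume, towards a contradiction, that $A_u$ contains a red edge, and let $w$ be the (non-trivial) red word read along $F_0$ in the direction of translation of $u$, so that the red word carried by $A_u$ is the bi-infinite word $w^\infty$. An elementary count shows that any sub-segment of $A_u$ of combinatorial length greater than $(m+2)L$ contains at least $m$ consecutive full fundamental domains, hence carries a red word containing $w^m$ (read in the direction of $u$) or $\Theta(w)^m$ (read in the opposite direction). Applying this to $\beta\cap A_u$, whose combinatorial length exceeds $n_2\len u$, I conclude that $\Red\beta$ admits as a factor an $m$-th power of a non-trivial word ($w$ or $\Theta(w)$, according to whether the orientations of $\beta$ and of $A_u$ agree on the intersection).

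It then remains to rule this out. Let $p_0$ be the integer, fixed above, with $f^{p_0}_\#(e_\bullet)=e_\bullet\nu_0 e_\bullet\nu_1$ and $\nu=e_\bullet\nu_0$, the yellow--red decomposition being a splitting; taking red words gives $\Red{f^{p_0}_\#(e_\bullet)}=\Red\nu\cdot\Red{e_\bullet\nu_1}$, so $\Red\nu$ is a prefix of $\Red{f^{p_0}_\#(e_\bullet)}=\sigma^{p_0}(e_\bullet)$. Since $\nu$ is red-legal, \autoref{res: substitution and RTT coincide on the red} gives $\Red{f^p_\#(\nu)}=\sigma^p(\Red\nu)$, and because $\sigma$ is a monoid morphism this word is a prefix of $\sigma^{p}\bigl(\sigma^{p_0}(e_\bullet)\bigr)=\sigma^{p+p_0}(e_\bullet)=\Red{f^{p+p_0}_\#(e_\bullet)}$. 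Hence $\Red{f^{p+p_0}_\#(e_\bullet)}$ would contain an $m$-th power, contradicting \autoref{res: no power in the red}; therefore $A_u$ has no red edge, i.e. it consists only of yellow edges. I expect the only delicate points to be bookkeeping — that freeness of the action forces $u$ to be hyperbolic (so $\len u$ is a positive integer and $A_u$ genuinely periodic), and that reading $\beta\cap A_u$ along $\beta$ rather than along $A_u$ may introduce the flip $\Theta$, which is harmless since $\Theta(w)^m=\Theta(w^m)$ is again an $m$-th power. All of the substance is already contained in \autoref{res: no power in the red}, itself resting on \autoref{res: sigma(e) not shift periodic} and Moss\'e's theorem.
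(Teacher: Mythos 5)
Your proof is correct and follows essentially the same route as the paper: set $n_2=m+2$ with $m$ the constant from \autoref{res: no power in the red}, observe that a long intersection $\beta\cap A_u$ with a red edge on $A_u$ forces an $m$-th power in $\Red\beta=\Red{f^p_\#(\nu)}$, and conclude since $\Red{f^p_\#(\nu)}$ is a prefix of $\Red{f^{p+p_0}_\#(e_\bullet)}$. Your fundamental-domain bookkeeping (and the remark that the flip $\Theta$ is harmless) is a slightly more explicit version of the paper's argument, which simply picks a vertex $x\in A_u$ with $u^jx\in\beta$ for $0\leq j\leq m$ and reads the red word of $\geo{x}{u^mx}$.
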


\begin{proof}
	By construction there exists $p_0 \in \N$ such that $\nu$ is a prefix of $f_\#^{p_0}(e_\bullet)$.
	More generally, for every $p \in \N$, $f_\#^p(\nu)$ is a prefix of $f_\#^{p+p_0}(e_\bullet)$.
	According to \autoref{res: no power in the red}, there exists $m \in \N$ such that for every $p \in \N$, the red word associated to $f_\#^p(\nu)$ does not contain an $m$-th power.
	Put $n_2 = m +2$.
	Let $\beta$ be a path of $\tilde G$ and $p$ an integer such that the red words respectively associated to $\beta$ and $f^p_\#(\nu)$ are the same.
	Let $u$ be a non-trivial element of $\free r$ such that 
	\begin{displaymath}
		\diam \left(\beta\cap A_u \right) > n_2 \len u.
	\end{displaymath}
	In particular there is a vertex $x \in A_u$ such that for every $j \in \intvald 0m$, $u^jx$ belongs to $\beta$.
	Assume now that $A_u$ contains a red edge $e$.
	Since $A_u$ is $u$-invariant bi-infinite geodesic, the geodesic $\geo x{ux}$ contains some red edges.
	In particular, if $\alpha$ stands for the path $\geo x{u^mx}$ then $\Red{\alpha}$ contains an $m$-th power.
	However, $\beta$ is a path of $\tilde G$.
	Consequently, $\geo x{u^mx}$ is a subset, hence a subpath of $\beta$.
	Therefore the red word associated to $\beta$ and thus to $f_\#^p(\nu)$ contains an $m$-th power.
	Contradiction.
\end{proof}

We finish this section by the proof of \autoref{res: reduction - one exponential stratum}.

\begin{proof}[Proof of \autoref{res: reduction - one exponential stratum}]
Our goal is to prove that for sufficiently large odd integers $n$ the sequence $(\phi^p(g))_{p\in\N}$ of elements of $\free r$ is embedded in $\burn rn$.
Since $\phi$ is an automorphism, it is sufficient to check that for all $p\in\N^*$, $\phi^p(g)\not\equiv g$ in $\burn rn$.
We are going to use the criterium of \autoref{sec:criterium}.
Recall that the geodesic $[\tilde x_0,g\tilde x_0]$ is a lift in $\tilde G$ of $\nu$.
We denote by $n_1$, $\xi$ and $n_2$ the constants given, accordingly, by \autoref{res: criterium} and \autoref{res; no power in the red - geometric statement}.
For the rest of the proof we fix an odd integer $n$ larger than 
\begin{equation*}
	n_0 = \max\left\{n_1,2n_2+2\xi+1, \dist{\tilde x_0}{g\tilde x_0}  \right\}.
\end{equation*}
Note that this lower bound only depends on the outer automorphism $\Phi$ and the RTT $f$.

\paragraph{}
Let $p \in \N^*$.
By construction the path $\beta=[\tilde x_0,\phi^p(g)\tilde x_0]$ is a lift of $f^p_\#(\nu)$.
Assume now that $\phi^p(g)\equiv g$ in $\burn rn$.
By \autoref{res: criterium}, there exists two sequences of $(n,\xi)$-elementary moves which respectively send $g$ and $\phi^p(g)$ to the same element of $\free r$.
However, we fixed $n \geq \dist{\tilde x_0}{g\tilde x_0}$.
Therefore no $(n,\xi)$-elementary move can be performed on $\geo{\tilde x_0}{g\tilde x_0}$.
It follows that there exists a sequence of $(n,\xi)$-elementary moves which sends $\phi^p(g)$ to $g$.
We denote by $\beta_i$ the reduced path obtained from $\beta$ after the $i$-th $(n,\xi)$-elementary move.
In particular $\beta_0 = \beta$.
We are going to show, by induction on $i$, that:
\begin{enumerate}[(H1)]
\item \label{it:yellow} for all $j\in\{0,\dots,i-1\}$, no maximal yellow subpath of $\beta_j$ is completely removed when operating the $(j+1)$-th elementary move,
\item \label{it:red} $\Red{\beta_{i}}=\Red{\beta}$.
\end{enumerate}
If $i = 0$, (H\ref{it:yellow}) is vacuous whereas (H\ref{it:red}) is obvious.
Assume that these two conditions hold for $i$.
We focus on the $(i+1)$-th elementary move.
Let us denote by $A_u$ the axis of  the elementary move performed on $\beta_i$.
In particular $\diam\left(\beta_i \cap A_u \right) \geq (n/2-\xi) \len u$ in $\tilde G$.
By hypothesis (H\ref{it:red}) the red words associated to $\beta_i$, $\beta$ and $f_\#^p(\nu)$ are the same.
By \autoref{res; no power in the red - geometric statement} the axis $A_u$ only contains yellow edges.

\paragraph{}
We consider now the $k$-th maximal yellow subpath $\alpha$ of $\beta_i$.
Assume that $\alpha$ is completely removed when performing the $(i+1)$-th elementary move.
Since $A_u$ is yellow, the image of $\alpha$ by the projection $\tilde G \rightarrow G$ must be a loop.
\begin{figure}[h]
	\centering
	\includegraphics[scale=0.8]{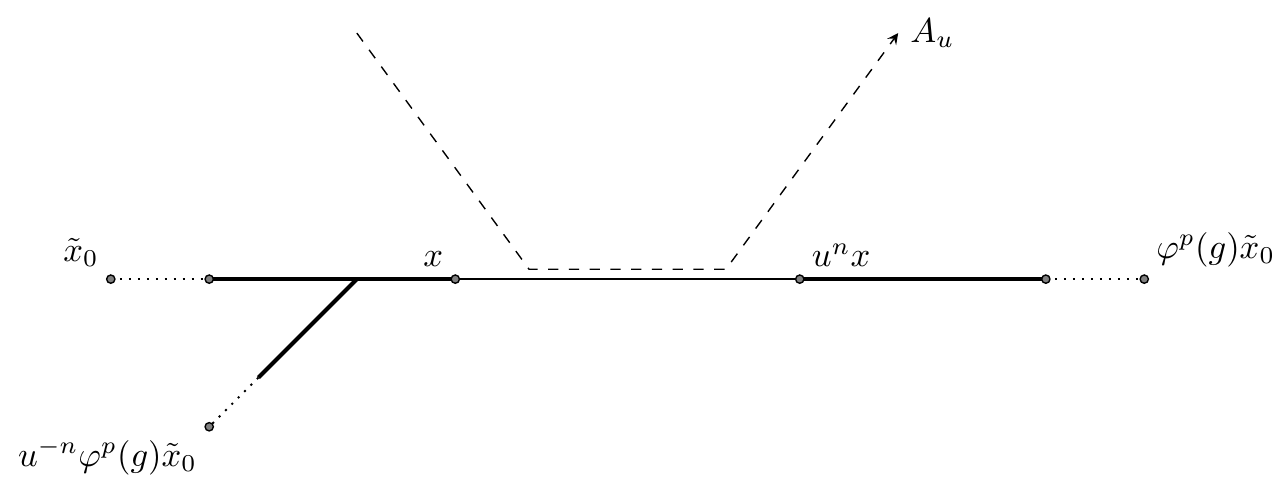}
	\caption{An elementary move removing a yellow path}
	\label{fig:collapsing-proof}
\end{figure}
Let $\alpha'$ be the $k$-th maximal yellow subpath of $\beta$.
By hypothesis (H\ref{it:yellow}), no maximal yellow subpath of $\beta_j$ is completely removed when operating the $(j+1)$-th elementary move for $j\in\{0,\dots,i-1\}$.
Thus $\alpha$ is obtained from $\alpha'$ by a sequence of elementary moves and the image of $\alpha'$ by $\tilde G \rightarrow G$ is also a loop.
On the other hand $f_\#^p(\nu)$ is a prefix of some path in the orbit under $f_\#$ of $e_\bullet$.
It follows from our original assumption that the image of $\alpha'$ in $G$ cannot be a loop.
Contradiction.
Thus (H\ref{it:yellow}) is true at step $i+1$.
Hypothesis (H\ref{it:yellow}) implies that $\Red{\beta_{i+1}}=\Red{\beta_i}$, and thus $\Red{\beta_{i+1}}=\Red{\beta}$ by hypothesis (H\ref{it:red}).
This gives (H\ref{it:red}) at step $i+1$, which completes the induction.

\paragraph{}
Recall that $(\beta_i)$ is the collection of paths obtained by the sequence of elementary moves which sends $\phi^p(g)$ to $g$.
It follows from the previous discussion that at each step $i$, $|\beta_i|_\text{PF} = |\beta|_\text{PF}$.
In particular $|f^p_\#(\nu)|_\text{PF} = |\beta|_\text{PF} = |\nu|_\text{PF}$.
However we build $\nu$ in such a way that $|f^p_\#(\nu)|_\text{PF} = \lambda^p |\nu|_\text{PF}$.
This contradicts our original assumption.
Therefore for every $p \in \N$, $\phi^p(g)\not\equiv g$ in $\burn rn$.
In particular $\phi$ (\resp $\Phi$) induces an automorphism (\resp outer automorphism) of $\burn rn$ of infinite order.
\end{proof}

\section{Comments and questions}

\subsection{About other possible strategies of proof}

\paragraph{}
In the introduction, we recalled the argument given by E.A.~Cherepanov.
It is easy to elaborate a generalization to a wider class of automorphisms
which does not require the criterium stated in \autoref{res: criterium}.

\paragraph{}
An outer automorphism $\Phi\in\out{\free{r}}$ is \emph{irreducible with irreducible powers} (or simply \emph{iwip}) if there is no (conjugacy class of a) proper free factor of $\free{r}$ which is invariant by some positive power of $\Phi$.
An \emph{iwip} outer automorphism can be represented by an (absolute) train-track map $f : G \rightarrow G$ with a primitive transition matrix \cite{BesHan92}. 
Roughly speaking it implies that there is no ``yellow strata'' which were the ones responsible for having large powers in our words.
As a particular case of \autoref{res: no power in the red}, there exists a loop $\gamma$ in $G$ and an integer $n_2$ with the following property.
For every $p \in \N$, the word labeling the loop $f_\#^p(\gamma)$ does not contain an $n_2$-th power (as a complete word, not just its red part).
Consequently \autoref{res: criterium Novikov Adian} is sufficient to conclude.
Note also that, in this context,  \autoref{res: no power in the red} can be proved in a much easier way.
Using either the action of $\free r$ on the stable tree associated to $\Phi$ (see \cite[Theorem 2.1]{GabJaeLev98}) or the fact that the attractive laminations of $\Phi$ cannot by carried by a subgroup of rank $1$ (see \cite[Proposition 2.4]{BesFeiHan97}).

\paragraph{}
However as we explained in the introduction there exists automorphisms for which one cannot use the same strategy.
Consider for instance the automorphism $\psi$ of $\free 4 = \mathbf F (a,b,c,d)$ defined in the introduction by $\psi(a)=a$, $\psi(b)=ba$, $\psi(c)=cbcd$ and $\psi(d)=c$.
One can view $\psi$ as a relative train-track map on the rose: there is only one exponential stratum (the ``red stratum'' which corresponds to the free factor $\langle c,d\rangle$) and the restriction of $\psi$ to $\langle a,b\rangle$ has polynomial growth (and $\langle a,b\rangle$ gives rise to a ``yellow stratum''). 
We saw that $a^{p-1}$ occurs as subword of $\psi^p(d)$.
Nevertheless, we still do not need \autoref{res: criterium} to conclude here that the automorphism $\psi$ satisfies the statement of  \autoref{theo:main}.
It is sufficient to pass to the quotients of $\free{r}$ and $\burn{r}{n}$ by the normal subgroup generated by $a$ and $b$, and then to argue as previously.

\paragraph{}
Nevertheless, given an arbitrary automorphism, this trick (passing to a well chosen quotient) seems to be less easy to run.
Look at the automorphism $\psi$ of $\free 4 = \mathbf F (a,b,c,d)$ defined as follows.
\begin{displaymath}
	\begin{array}{crcl}
		\psi: & a & \rightarrow & a \\  
		& b & \rightarrow & ba \\ 
		& c & \rightarrow & cd\inv bd  \\ 
		& d & \rightarrow & dc d\inv bd.
	\end{array}
\end{displaymath}
This automorphism grows exponentially.
However if one considers the quotient of $\free 4$ by the normal subgroup generated by $a$ and $b$, it induces the Dehn twist $c \rightarrow c$, $d\rightarrow dc$, which has finite order as an automorphism of $\burn 2n$.

\paragraph{} Let $\phi$ be an automorphism of $\free r$.
The geometry of the suspension $\sdp[\phi] {\free r}\Z$ might provide an alternative proof of \autoref{theo:main}.
In \cite{Cou10a}, the first author solved indeed the case where $\sdp[\phi]{\free r}\Z$ is a hyperbolic group.
Generalizing the Delzant-Gromov approach of the Burnside Problem, he constructed a sequence of groups $\H_j$ with $\dlim \H_j = \burn rn$ such that for every $j$,
\begin{itemize}
	\item $\phi$ induces an automorphism of infinite order of $\H_j$,
	\item $\sdp[\phi]{\H_j}\Z$ is a hyperbolic group obtained from $\sdp[\phi]{\H_{j-1}}\Z$ by small cancellation.
\end{itemize}
It follows from the hyperbolicity that $\phi$ induces an automorphism of infinite order of $\burn rn$.

\paragraph{} If $\phi$ is an arbitrary exponentially growing automorphism then $\sdp[\phi]{\free r}\Z$ is no more hyperbolic.
However F.~Gautero and M.~Lustig proved that $\sdp[\phi]{\free r}\Z$ is hyperbolic relatively to a family of subgroups which consists of conjugacy classes that grow polynomially under iteration by $\phi$ \cite{Gautero:2007vp,Gautero:2007uua}.
Therefore one could use a generalization of the iterated small cancellation theory to relative hyperbolic groups.
We refer the reader to \cite{Coulon:2013tx} for a detailed presentation of the Delzant-Gromov approach to the Burnside problem and \cite{Coulon:2013ty} for a generalization.
See also \cite{Dahmani:2011vu} for a theory of small cancellation in relatively hyperbolic groups.

\subsection{Quotients of $\text{Out}({\free {r}})$}

\paragraph{}
The following remark is due to M.~Sapir.
\autoref{prop:pol} says that for every integer $n \geq 1$, polynomially growing automorphisms of $\free r$ induce automorphisms of finite order of $\burn rn$.
More precisely their orders divide 
\begin{equation*}
p(r,n)  = n^{2(2^{r-1}-1)}.
\end{equation*}
Let us denote by $\mathcal Q_{r,n}$ the quotient of $\out{\free r}$ by the (normal) subgroup generated by
\begin{equation*}
	\left\{\Phi^{p(r,n)}| \Phi \in \out{\free r} \text{ polynomially growing} \right\}.
\end{equation*}
In particular the $p(r,n)$-th power of the Nielsen transformations which generate $\out{\free r}$ are trivial in $\mathcal Q_{r,n}$.
It follows from \autoref{prop:pol} that the map $\out{\free r} \rightarrow \out{\burn rn}$ induces a natural map $\mathcal Q_{r,n} \rightarrow \out{\burn rn}$.
Therefore we have the following results

\begin{theo}
	Let $r \geq 3$.
	There exists $n_0$ such that for all odd integers $n \geq n_0$, the group $\mathcal Q_{r,n}$ contains copies of $\free 2$ and $\Z^{\lfloor r/2 \rfloor}$.
\end{theo}

\begin{proof}
	This is a consequence of \cite{Cou10a} Theorems~1.8 and 1.10.
\end{proof}

\begin{theo}
	Let $\Phi$ be an outer automorphism of $\free r$. 
	The following assertions are equivalent:
	\begin{enumerate}
		\item 
		\label{enu: mth exp growth 1}
		$\Phi$ has exponential growth;
		\item 
		\label{enu: mth one exponent 1}
		there exists $n\in\N$ such that the image of $\Phi$ in $\mathcal Q_{r,n}$ has infinite order;
		\item 
		\label{enu: mth all odd exponents 1	}
		there exist $\kappa, n_0\in\N$ such that for all odd integers $n\geq n_0$, the image of $\Phi$ in $\mathcal Q_{r,\kappa n}$ has infinite order.
	\end{enumerate}

\end{theo}

\begin{proof}
This is a consequence of our main theorem.
\end{proof}

\subsection{Exponentially growing automorphisms of the free group can have finite order in a free Burnside group}

\paragraph{}
The constant $n_0$ in \autoref{theo:main} does depend on the outer automorphism $\Phi\in\out{\free{r}}$.
Indeed, we give in this section explicit examples of automorphisms in the kernel of the natural map $\aut{\free{r}}\rightarrow \aut{\burn{r}{n}}$ which have exponential growth. In particular, there are \emph{iwip} automorphisms in this kernel.

\subsubsection{A first family of examples}

\paragraph{}
An outer automorphism $\Phi\in\out{\free{r}}$ induces, by abelianisation, an automorphism of $\Z^r$. 
This defines a homomorphism: $\out{\free{r}}\rightarrow GL(r,\Z)$, $\Phi\mapsto M_\Phi$.
Nielsen proved that for $r=2$, this morphism is an isomorphism \cite{Nielsen:1917du}.
Moreover, $\Phi$ has exponential growth if and only if the absolute value of the trace of $M^2_\Phi\in GL(2,\Z)$ is larger than 2.
\exs
	Let $\{a,b\}$ be a basis of the free group $\free{2}$.
	For $n\in\N^*$,  we define $\phi_n\in\aut{\free{2}}$ by $\phi_n(a)=a(ba^n)^n$, $\phi_n(b)=ba^n$.
	We denote by $\Phi_n$ the corresponding outer class in $\out{\free{2}}$.
	The outer class $\Phi_n$ has exponential growth since the trace of $M^2_{\Phi_n}$ equals $n^4+4n^2+2$. 
	However, the outer automorphism of $\burn{2}{n}$ induced by $\Phi_n$ is the identity.

	\paragraph{}
	For $r>2$, we consider a splitting of $\free{r}$ as a free product $\free{r}=\free{2}*\free{r-2}$.
	For $n\in\N^*$,  we consider the automorphism $\psi_n=\phi_n*Id$ which is equal 	to $\phi_n$ (defined in the previous paragraph) when restricted to the first factor of the splitting and to the identity when restricted to the second factor.
	Again, the outer class $\Psi_n$ of $\psi_n$ has exponential growth (since $\Phi_n$ has), but 	the  the outer automorphism of $\burn{r}{n}$ induced by $\Psi_n$ is the identity.

\paragraph{}
These examples show that the constant $n_0$ in \autoref{theo:main} is not uniform: it does depend on the outer class $\Phi\in\out{\free{r}}$.
The automorphisms $\Phi_n$ are iwip automorphisms.
But this is not the case of the automorphisms $\Psi_n$. 
We fix this point in the next subsection.

\subsubsection{Iwip automorphisms of $\free{r}$ trivial in $\text{Out}(\burn{r}{n})$}

\paragraph{}To produce \emph{iwip} automorphisms in the kernel of the canonical map $\out{\free{r}} \rightarrow \out{\burn{r}{n}}$, one can follow the idea of W.~Thurston to generically produce pseudo-Anosov homeomorphisms of a surface by composing well chosen Dehn twist homeomorphisms \cite{Thu88}.

\paragraph{}In the context of automorphisms of free groups, there is a notion of Dehn twist (outer) automorphism -- see for instance \cite{CohLus95} -- which generalizes the notion of a Dehn twist homeomorphism of a surface:
\autoref{exa: dehn-twist} provides such a Dehn twist automorphism.
In \cite{Clay:2010ie},  M.~Clay and A.~Pettet explain how to generate \emph{iwip} automorphisms of $\free{r}$ by composing two Dehn twist automorphisms associated to a filling pair of cyclic splittings of $\free{r}$.
We will not explicit these definitions here.
Four our purpose we only need to know that:
\begin{itemize}
  \item Dehn twist automorphisms have polynomial growth (in fact linear growth),
and 
  \item there exist Dehn twist automorphisms $\Delta_1,\Delta_2 \in \out{\free r}$ satisfying the hypothesis of the following theorem.
\end{itemize}

\begin{theo}[Clay-Pettet {\cite[Theorem 5.3]{Clay:2010ie}}]
\label{thm:clay-pettet}
	Let $\Delta_1,\Delta_2 \in \out{\free r}$ be the Dehn twist outer automorphisms for a filling pair of cyclic splittings of $\free r$.
	There exists $N\in\N$ such that, for every $p,q>N$:
	\begin{itemize}
	\item the subgroup of $\out{\free{r}}$ generated by  	$\Delta_1^p$ and $\Delta_2^q$ is a free group of rank $2$,
	\item if $\Phi\in<\Delta_1^p,\Delta_2^q>$ is not conjugate to a power of either $\Delta_1^p$ or $\Delta_2^q$, then $\Phi$ is an iwip outer automorphism.
	\end{itemize}
\end{theo}

We fix an exponent $n\in\N$.
We consider two such Dehn twist outer automorphisms $\Delta_1$ and $\Delta_2$ and the integer $N\in\N$ given by \autoref{thm:clay-pettet}.
Since $\Delta_1$ and $\Delta_2$ have polynomial growth, they induce automorphism of finite order of $\burn{r}{n}$.
In particular, there exists $p>N$ such that $\Phi=\Delta_1^p\Delta_2^p$ is in the kernel of the map $\out{\free{r}}\rightarrow \out{\burn{r}{n}}$.
However \autoref{thm:clay-pettet} ensures that $\Phi$ is an \emph{iwip} outer automorphism of $\free r$.

\subsection{Growth rates in $\text{Out}(\free{r})$ and $\text{Out}(\burn{r}{n})$}

\paragraph{}
Let $\Phi$ be an exponentially growing automorphism of $\free r$.
Our study in \autoref{sec:proof main} seems to indicate that for odd exponents $n$ large enough some structure of $\Phi$ is preserved in $\burn rn$.
Therefore we wonder how much information could be carried through the map $\out{\free r}\rightarrow \out{\burn rn}$.
In particular what can we say about the growth rate of $\Phi$?

\paragraph{}
Let $\G$ be a group generated by a finite set $S$.
We endow $\G$ with the word-metric with respect to $S$.
The length of the conjugacy class of $g \in \G$, denoted by $\|g \|$, is the length of the shortest element conjugated to $g$.
An outer automorphism $\Phi$ of $\G$ naturally acts on the set of conjugacy classes of $\G$.
Consequently, as in the free group, one can define the \emph{(exponential) growth rate} of $\Phi$ by
\begin{displaymath}
	EGR(\Phi) = \sup_{g \in \G} \limsup_{p \rightarrow + \infty} \sqrt[p]{\left\| \Phi^p(g)\right\|}.
\end{displaymath}
Since the word-metrics for two distinct finite generating sets of $\G$ are bi-lipschitz equivalent, this rate does not depend on $S$.
The automorphism $\Phi$ is said to have \emph{exponential growth} if $EGR(\Phi)>1$.

\paragraph{} For the extend of our knowledge it is not known if there exist outer automorphisms of Burnside groups with exponential growth.
We would like to ask the following questions. 
\begin{itemize}
	\item Are there automorphisms of $\burn rn$ with exponential growth?
	\item Let $\Phi \in \out{\free r}$ with exponential growth. Is there an integer $n_0$ such that for all (odd) exponents $n \geq n_0$, the automorphism $\hat \Phi_n$ of $\burn rn$ induced by $\Phi$ has exponential growth? Such that $EGR( \hat \Phi_n) = EGR(\Phi)$?
	\item Are there automorphisms of $\burn rn$ of infinite order which do not have exponential growth?
\end{itemize}

\paragraph{}
On the other hand, it could be very interesting to understand to what extend the structure of the attracting laminations associated to an outer automorphism of $\free r$ is preserved in $\burn rn$.
Recall that theses laminations are the fundamental tool used by M.~Bestvina, M.~Feighn and M.~Handel to prove that $\out{\free r}$ satisfies the Tits alternative \cite{BesFeiHan00,BesFeiHan05}.

\makebiblio

\noindent
\emph{R\'emi Coulon} \\
Department of Mathematics, Vanderbilt University\\
Stevenson Center 1326, Nashville TN 37240, USA\\
\texttt{remi.coulon@vanderbilt.edu} \\
\texttt{http://www.math.vanderbilt.edu/$\sim$coulonrb/}

\vskip 5mm

\noindent
\emph{Arnaud Hilion} \\
Aix Marseille Université, CNRS, Centrale Marseille, LATP, UMR 7353 \\
13453 Marseille, France \\
\texttt{arnaud.hilion@univ-amu.fr} \\
\texttt{http://junon.u-3mrs.fr/hilion/}

\end{document}